\documentclass[preprint,10pt]{elsarticle}
\usepackage[utf8]{inputenc}
\usepackage[left=2cm,right=2cm,top=3cm,bottom=2cm]{geometry}
\usepackage[T1]{fontenc} 
\usepackage{lmodern}

\usepackage{lineno}
\modulolinenumbers[5]

\usepackage{amsmath}
\usepackage{amsthm}
\usepackage{amsfonts}
\usepackage{amssymb}
\usepackage{array}
\usepackage{caption}
\usepackage{enumitem}
\usepackage{framed}
\usepackage{graphicx}
\usepackage{pdfpages}
\usepackage{subcaption}
\usepackage{wrapfig}
\usepackage{xcolor}

\newtheorem{thm}{Theorem}
\newtheorem{lem}[thm]{Lemma}
\newtheorem{cor}[thm]{Corollary}
\newdefinition{rmk}[thm]{Remark}
\newdefinition{defn}[thm]{Definition}
\newdefinition{exmp}[thm]{Example}

\newcommand{\Real}{\mathbb{R}}
\newcommand{\rng}{\mathrm{Rng}}

\usepackage{multirow}

\usepackage{natbib}
\usepackage{etoolbox}
\apptocmd\normalsize{%
 \abovedisplayskip=4pt
 \abovedisplayshortskip=3pt
 \belowdisplayskip=4pt
 \belowdisplayshortskip=6pt
}{}{}

\usepackage{enumitem}

\setlist{topsep=5pt,itemsep=2pt} 

\usepackage[color=blue2]{todonotes}
\newcommand{\remPS}[1]{\noindent {\color{cyan}[ps: #1]}}

\newcommand{\remJV}[1]{\noindent {\color{green}[jv: #1]}}
\newcommand{\remJH}[1]{\noindent {\color{olive}[jh: #1]}}

\usepackage{mathrsfs}

\newcommand{\mir}{\varphi}

\usepackage{accents}

\DeclareMathOperator{\image}{Im}

\newcommand{\alphabet}{\mathcal{A}_3}

\newcommand{\N}{\mathbb{N}}
\newcommand{\Z}{\mathbb{Z}}
\newcommand{\fstrip}{V^\times}
\newcommand{\fstripL}{\fstrip_1}
\newcommand{\fstripR}{\fstrip_2}
\newcommand{\sector}{\mathcal{S}}

\newcommand{\ru}{\mathrm{u}}
\newcommand{\rv}{\mathrm{v}}

\begin{document}

\begin{frontmatter}
\title{
Characterization of spatial topological chaos for bistable lattice equation
}

\author[1]{Jakub Hesoun}
\ead{hesounj@fav.zcu.cz}

\author[1]{Petr Stehl\'{i}k\corref{cor1}}
\ead{pstehlik@fav.zcu.cz}

\author[1]{Jon\'a\v{s} Volek}
\ead{volek1@fav.zcu.cz}

\cortext[cor1]{Corresponding author}

\address[1]{Department of Mathematics and NTIS, University of West Bohemia, Univerzitn\'{i} 8, 301~00 Pilsen, Czech Republic}

\begin{abstract}

In this paper we extend the description of spatial topological chaos for bistable lattice equations in a small diffusion regime. We generalize the Keener's proof and show that stationary solutions are topologically conjugate to biinfinite sequences on a three-symbol alphabet. We provide a priori estimates for the values of stationary solutions and use them to show that there are exactly two monotone stationary fronts (up to a translation). Finally, we apply the localization of all stationary solutions to characterize their stability and get the existence of unique stable and unstable monotone fronts (up to a translation).

\end{abstract}

\begin{keyword}
lattice equation \sep bistability \sep Nagumo equation \sep Frenkel-Kontorova model  \sep stationary front \sep pinning


\MSC 34A33 \sep 37L60 \sep  65M22 

\end{keyword}

\end{frontmatter}

\section{Introduction and main theorems}
\label{sec:intro}

We characterize and localize $3^\Z$ stationary solutions of Nagumo-type bistable lattice differential equations (LDEs)
\begin{equation}\label{e:lde:Nagumo} 
{u}_{i}'(t) = d(u_{i-1}(t) - 2 u_{i}(t) + u_{i+1}(t))+g(u_{i}(t);a),\quad i\in\mathbb{Z}, \quad t \geq 0,
\end{equation}
in which $ d>0 $ is a sufficiently small diffusion coefficient and $g$ is a smooth bistable reaction function satisfying:
\begin{enumerate}[label=\itshape{(g\textsubscript{\arabic*})}]
    \item \label{hyp:g:nodes} $ g(0) = g(a) = g(1) = 0 $ in which $ a \in (0,1) $,
    \item \label{hyp:g:shape} $ g'(u) < 0 $ for $ u \in [0,a_{1}) \cup (a_{2},1] $ and $ g'(u) > 0 $ for $ u \in (a_{1},a_{2}) $ in which $ 0 < a_{1} < a < a_{2} < 1 $.
\end{enumerate}

As a special case, we study monotone stationary fronts $u=(u_i)$, $i\in\Z$, of \eqref{e:lde:Nagumo}, i.e., solutions of the second-order difference equation
\begin{equation}\label{e:stationary}
d(u_{i-1}-2u_{i}+u_{i+1}) + g(u_{i}) = 0, \quad i \in \mathbb{Z},
\end{equation}
that monotonically connect two stable homogeneous solutions $u\equiv 0$ and $u\equiv 1$, i.e.,
\begin{equation}\label{e:connection}
\lim_{i\to{-\infty}}u_i=0,\quad \lim_{i\to\infty} u_i=1, \text{ and } \quad u_{i}\leq u_{i+1} \text{ for all } i\in\mathbb{Z}.
\end{equation}

The translational invariance of the problem~\eqref{e:stationary} implies that if $(u_i)$, $i\in\Z$, is a stationary solution of~\eqref{e:lde:Nagumo} then $(v_i)$, $i\in\Z$, with $v_i=u_{i+k}$, $k\in\mathbb{Z}$, is also a stationary solution. In this paper, we show that there are exactly two (up to a translation) different monotone stationary profiles of \eqref{e:lde:Nagumo} satisfying \eqref{e:connection}.

\paragraph{Planar maps and spatial topological chaos} Stationary solutions (not only monotone fronts) of the LDE~\eqref{e:lde:Nagumo} have been extensively studied by  planar maps derived from~\eqref{e:stationary}. Keener \cite{Keener1987} considered the map $\phi:\mathbb{R}^2\to\mathbb{R}^2$, $(u_{n+1},v_{n+1})=\phi(u_n,v_n)$ in the form 
\begin{equation}\label{e:map:Keener}
\begin{cases}
    u_{n+1}=2u_n-v_n-\tfrac{1}{d}g(u_n),\\
    v_{n+1}=u_n.
\end{cases}
\end{equation}
He applied a modification of Moser's theorem \cite{Moser1975} (Moser's theorem being a variant of Smale's horseshoe theorem \cite{Smale1967}) to get that the map $\phi$ \eqref{e:map:Keener} possesses the shift $\sigma$ on the biinfinite sequence of symbols $\{0,1 \}$ as a subsystem. Nagumo-type LDEs with sufficiently small $d$ are thus the key example of the spatial topological chaos, \cite{Chow1995}. Keener's results show that their stationary solutions form an invariant set that is topologically semiconjugate to the chaotic symbolic shift $\sigma$. 




\paragraph{Pinning} Keener's analysis \cite{Keener1987} of the map~\eqref{e:map:Keener} and the large number of stationary solutions in a small diffusion regime is closely tied to the description of the pinning phenomenon. The Nagumo partial differential equation
\[
u_t=du_{xx}+g_{\mathrm{cub}}(u;a),
\]
with the cubic bistability $g_{\mathrm{cub}}(u;a)=u(1-u)(u-a)$, $a\in(0,1)$, has a monotone stationary front if and only if $a=1/2$. Moreover, the monotone front has the explicit description and speed $c$
$$
u(x,t)=\frac{1}{2}\left(1+\tanh\left(\frac{x-ct}{2\sqrt{2d}}\right)\right), \quad c=\sqrt{2d}\left(a-\frac{1}{2}\right).
$$
In contrast, the Nagumo LDE~\eqref{e:lde:Nagumo} with the cubic $g=g_{\mathrm{cub}}$ admits propagation failure -- for each $a\in(0,1)$ the monotone fronts do not move for sufficiently small diffusion $d>0$ and start moving only if the diffusion $d$ is large enough $d>D_c(a)$, $a\neq 1/2$. 

Keener \cite{Keener1987} also gave the first estimate of the threshold $D_c(a)$ for small values $a\approx0$. Various approaches have been used to improve these estimates on $D_c(a)$ and extend them for all values of $a\neq1/2$, e.g., \cite{Bustamante2025, Comte2001, Erneux1993}.

Propagation failure is a generic phenomenon for bistable LDEs \eqref{e:lde:Nagumo} and has been observed, e.g., in coupled chemical reactors \cite{Laplante1992}, trapping of light waves in optical lattices \cite{Yablonovitch1999}, or inconsistencies in propagation of nerve signals \cite{Lucchinetti2008}. 

However, the exact bounds $D_c(a)$ are known only in special cases of the so-called piecewise-linear bistable caricatures. Fath \cite{Fath1998} computed the bounds $D_c(a)$ for the LDE~\eqref{e:lde:Nagumo} with the discontinuous McKean's caricature
$$
g_\mathrm{MK}(u;a) = \begin{cases}
-u, & u\leq a,\\
1-u, & u>a.
\end{cases}
$$
Similarly, Elmer \cite{Elmer2005, Elmer2006} described curves $D_c(a)$ for the LDE~\eqref{e:lde:Nagumo} with the continuous but nonsmooth sawtooth caricature
\begin{equation}\label{e:sawtooth}
g_\mathrm{ST}(u;a) = \begin{cases}
 -  u, & u\leq a/2,\\
 (u-a), & u\in(a/2,(a+1)/2),\\
 -(u-1), & u\geq (a+1)/2.
\end{cases}
\end{equation}

Bistable LDEs~\eqref{e:lde:Nagumo} with an additional forcing term have been recently studied to describe the external forces necessary to remove the propagation failure phenomenon in particle chains, \cite{AlHaj2023, Zhou2025}.

\paragraph{Monotone and Periodic Patterns} Among the large number of stationary solutions, monotone and periodic ones have received special treatment. Elmer \cite{Elmer2005} showed that for $d>0$ sufficiently small there is an explicit interval $a=[a_-,a_+]$ for which the LDE~\eqref{e:lde:Nagumo} with the sawtooth bistable caricature $g=g_\mathrm{ST}$ has exactly two monotone stationary fronts (up to a translation). Moreover, there naturally exist nonmonotone pinned and traveling waves in bistable LDEs~\eqref{e:lde:Nagumo} that connect homogeneous stationary solutions with periodic stationary solutions and can create intricate wave collisions \cite{Hupkes2019, Hupkes2019b}.

The periodic patterns of the LDE~\eqref{e:lde:Nagumo} themselves possess interesting properties for small $d$. An $n$-periodic stationary solution can be represented by a word $w=\{0,a,1\}^n$ of length $n\in\N$. Consequently, there are $3^n$ of $n$-periodic solutions, $2^n$ of them corresponding to words $w=\{0,1\}^n$ are asymptotically stable, \cite{Hupkes2019c}. Moreover, there is a partial ordering among the periodic patterns. Every pair of $n$-periodic patterns can be ordered if at least one of them is stable, i.e., represented by a word $w=\{0,1\}^n$. This is no longer true if both solutions are unstable, i.e., represented by words from $\{0,a,1\}^n$, these solutions do not necessarily preserve ordering of the corresponding symbolic words, \cite{Hupkes2019, Hupkes2019c}.




\begin{figure*}[t!]
\centering
        \includegraphics[width=0.9\linewidth]{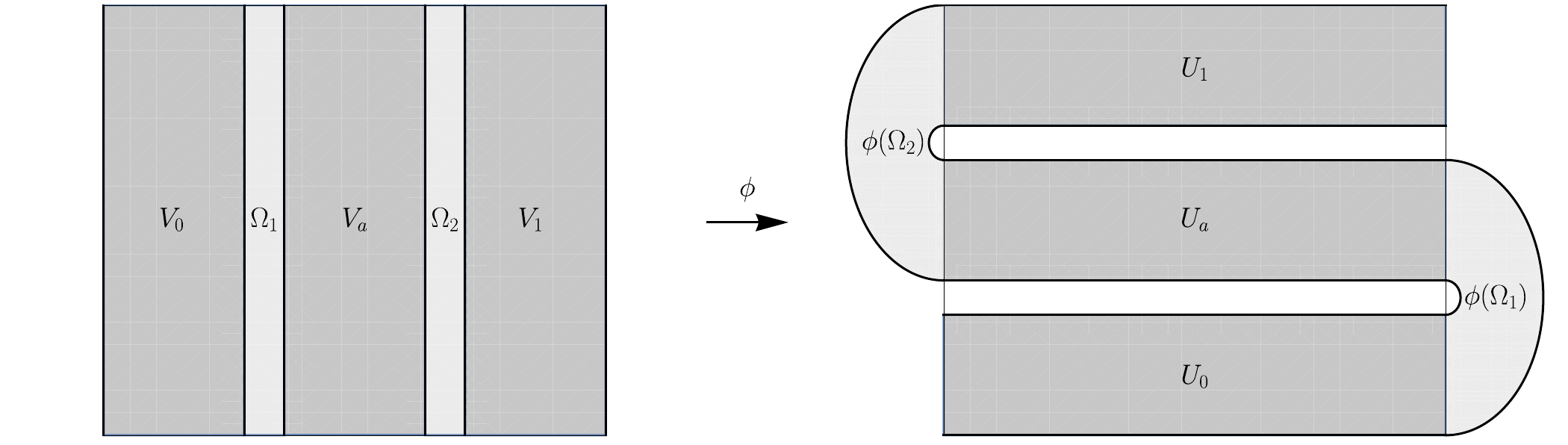}\\
        \includegraphics[width=0.9\linewidth]{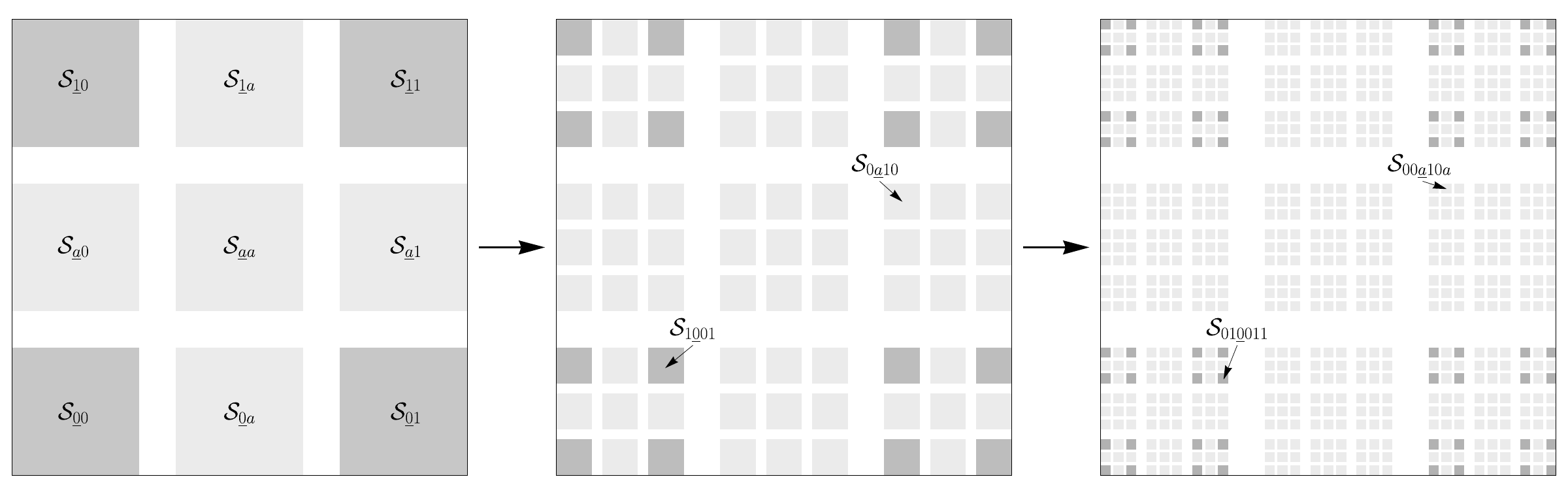}
        \caption{Simplified schematic illustration of the symbolic dynamics connected to the map $\phi$~\eqref{e:map:Keener} and Theorems~\ref{t:main-uniqueness}--\ref{t:main-stability}. The unit square $Q=[0,1]^2$ is stretched horizontally and bent twice so that the vertical strips $V_0$, $V_a$, $V_1$ are mapped onto horizontal strips $U_0$, $U_a$, $U_1$, respectively (top panel). The iterated forward and backward applications of the map $\phi$ creates $3^{2n}$, $n\in\N$, sectors corresponding to words $w\in\{0,a,1\}^{2n}$ (bottom panel). Out of these, $2^{2n}$ sectors are generated by the two-letter alphabet (dark gray sectors). The naming of these sectors is included here for illustration and will be explained in detail in Section~\ref{sec:monotonicity}. Real nonlinear versions of these images generated by the map $\phi$~\eqref{e:map:Keener} contain thinner strips and tiny sectors, see~Figure~\ref{fig:1sectors} for a comparison.}
        \label{fig:symbolic-caricature}
\end{figure*}

\paragraph{Main results} In this paper we extend Keener's argument \cite{Keener1987} but apply the full version of Moser's horseshoe theorem \cite{Moser1975} to show that the map \eqref{e:map:Keener} is topologically conjugate to the shift on the three-letter biinfinite sequences $\Sigma_3=\{0,a,1\}^\Z$, see Figure~\ref{fig:symbolic-caricature}. Therefore, there are $3^\Z$ stationary solutions $u=(u_i)$, $i\in\Z$, of \eqref{e:lde:Nagumo} and these solutions generate points $(u_{i+1},u_i)$, $i\in\Z$, that form a Cantor set  $\Lambda\subset Q=[0,1]^2$.
\begin{thm}[Topological conjugacy to biinfinite sequences on three-symbol alphabet] \label{t:main-uniqueness}
Let $g \in C^{1}([0,1]) $ be a bi\-stable nonlinearity satisfying \ref{hyp:g:nodes}--\ref{hyp:g:shape}. Then for every sufficiently small $ d > 0 $, the planar map $\phi:\Real^2\to\Real^2$ defined by~\eqref{e:map:Keener} possesses the shift $\sigma:\Sigma_3\to\Sigma_3$ as a subsystem, i.e., there exists a set $\Lambda\subset Q=[0,1]^2$ and a homeomorphism $\tau:\Sigma_3\to \Lambda$ such that $\phi(\tau(s)) = \tau(\sigma(s))$ for every $s\in\Sigma_3$.
\end{thm}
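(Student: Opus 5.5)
The plan is to verify the hypotheses of Moser's horseshoe theorem \cite{Moser1975} for $\phi$ on $Q=[0,1]^2$ with three vertical strips. The conclusion of that theorem is exactly the statement: a homeomorphism $\tau:\Sigma_3\to\Lambda$ onto the invariant set $\Lambda=\bigcap_{n\in\Z}\phi^n(Q)$ that conjugates $\phi$ to $\sigma$. Throughout I work with $\varepsilon=d$ small. Note that $\phi$ is a diffeomorphism of $\Real^2$, since $\phi^{-1}(u,v)=(v,\,2v-u-\tfrac1d g(v))$. For a point $(u,v)\in Q$ we have $\phi(u,v)=(2u-v-g(u)/d,\,u)$. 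Hence the preimage $\phi^{-1}(Q)\cap Q$ consists of the points with $u\in[0,1]$ and $0\le 2u-v-g(u)/d\le 1$, that is,
\[
  g(u)\in\bigl[d(2u-v-1),\,d(2u-v)\bigr],
\]
a band of width $O(d)$ in $g$-values.

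\textbf{Step 1 (the strips).} By \ref{hyp:g:nodes}--\ref{hyp:g:shape}, $g$ has exactly three zeros $0,a,1$ in $[0,1]$. Since $g'(0)<0$, $g'(1)<0$ and $g'(a)>0$ (assuming nondegenerate zeros; otherwise use strict monotonicity near the zeros), for small $d$ the set $\{u: |g(u)|\le 2d\}$ splits into three disjoint intervals around $0$, $a$, $1$, each of length $O(d)$. In each such interval $g$ is strictly monotone. For fixed $v\in[0,1]$, the equation $2u-v-g(u)/d=w$ therefore has, for each $w\in[0,1]$, a unique solution $u=h_j(v,w)$ in the $j$-th interval, $j\in\{0,a,1\}$, because $\partial_u(2u-g(u)/d)=2-g'(u)/d$ is of order $1/d$ and has a fixed sign there. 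I then define $V_j=\{(h_j(v,w),v): v,w\in[0,1]\}$ and $U_j=\phi(V_j)=\{(w,h_j(v,w))\}$ after the coordinate swap. Thus $V_j$ are vertical strips bounded by graphs $u=h_j(v,0)$ and $u=h_j(v,1)$, and the $U_j$ are horizontal strips. I still have to check that $\phi$ maps the vertical boundaries of $V_j$ onto horizontal boundaries of $U_j$, and that $\phi(V_j)\subset Q$ crosses fully; this follows directly from the parametrization.

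\textbf{Step 2 (slope bounds / cone condition).} Moser's condition requires $0<\mu<1$ with the following properties: the boundaries of $V_j$ are graphs $u=\xi(v)$ with Lipschitz constant $\le\mu$; the boundaries of $U_j$ are graphs $v=\eta(u)$ with Lipschitz constant $\le\mu$; and the stretching estimates of the horseshoe conditions hold, with $D\phi$ mapping horizontal cones into horizontal cones and expanding. Implicit differentiation gives $\partial_v h_j = 1/(2-g'(u)/d)=O(d)$ and $\partial_w h_j=O(d)$. So all boundary slopes are $O(d)$, and any $\mu\in(0,1)$ works once $d$ is small, say $\mu=\tfrac12$. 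For the cone condition, $D\phi=\begin{pmatrix}2-g'(u)/d & -1\\ 1&0\end{pmatrix}$. On a vector $(\xi,\eta)$ with $|\eta|\le\mu|\xi|$, the image has first component of size $\ge(|g'(u)|/d-3)|\xi|$ and second component $\xi$. This gives the invariance of horizontal cones and expansion by a factor $\sim |g'|/d\to\infty$; the symmetric statement for $D\phi^{-1}$ handles vertical cones. The constant $c:=\min\{|g'(0)|,g'(a),|g'(1)|\}/2>0$ is what guarantees that $|g'(u)|\ge c$ on the strips.

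\textbf{Step 3 (conclusion).} Moser's theorem yields the conjugacy $\tau$ with $\phi\circ\tau=\tau\circ\sigma$ on $\Lambda\subset Q$, which gives Theorem~\ref{t:main-uniqueness}.

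The main obstacle I expect is the middle strip $V_a$, together with the requirement that $g\in C^1$ only. The orientation there is reversed, since $g'>0$, so the strip is mapped with a fold. We also need uniform positivity of $|g'|$ on the whole $O(d)$-neighbourhoods, and this uses continuity of $g'$ together with nonvanishing of $g'$ at the zeros. If \ref{hyp:g:shape} allows $g'(a)=0$, I would instead obtain expansion from the monotonicity of $g$ combined with a more careful choice of the strip widths, working with Lipschitz rather than derivative bounds. The orientation reversal itself causes no trouble, because Moser's conditions are stated in terms of Lipschitz constants, not orientation.
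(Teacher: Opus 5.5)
Your proposal is correct. You use the same strips as the paper: your $V_j=\{(u,v)\in Q:\ u \text{ near } j,\ 0\le h(u;a,d)-v\le 1\}$ are exactly the paper's $V_0,V_a,V_1$ cut out by the roots $\ru_0<\dots<\ru_8$. You verify the contraction hypotheses by a different mechanism, though.

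The paper stays with the width form (iii)--(iv) of Theorem~\ref{t:Moser} and argues topologically. It inserts the gap strips $V^\times_1,V^\times_2$. It then uses the compactness lemma on vertical shifts of a strictly monotone function (Lemma~\ref{l:width:vertical:strips:shifted:functions}) to bound the widths of their preimages inside each $V_i$ from below by $\delta^\times>0$, and subtracts.

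You instead use the derivative (sector/cone) form of the Conley--Moser conditions. It rests on three quantitative facts: $|g'|\ge c$ on the strips, boundary slopes are $O(d)$, and horizontal expansion is at least $c/d-3$.

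What your route buys is a multiplicative contraction factor $\nu=O(d)$ that is uniform over all vertical (horizontal) strips contained in $\bigcup V_i$ ($\bigcup U_i$). It also shows that preimages of Moser vertical curves are again Moser vertical curves, with slope $O(d)$. That is precisely what hypotheses (iii)--(iv) quantify over, whereas the paper's subtractive estimate is written out only for the first-level strips. The paper's route is more elementary, and its strip and gap geometry is reused directly in the sector localization of Section~\ref{sec:monotonicity}.

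To plug into Theorem~\ref{t:Moser} as the paper formulates it, you should add or cite Moser's standard lemma that the sector condition implies the width contraction. Alternatively, cite the sector version of the theorem directly.

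There are three harmless slips:
\begin{itemize}
\item $\phi$ maps the vertical boundaries $u=h_j(v,0)$, $u=h_j(v,1)$ of $V_j$ onto the vertical boundaries $w=0$, $w=1$ of $U_j$, not onto its horizontal ones. Vertical-to-vertical is what Moser requires, and it is what your parametrization shows.
\item There is no fold inside $V_a$. Since $\det D\phi=1$, $\phi$ maps $V_a$ diffeomorphically onto $U_a$ by essentially a half-turn. The bends of $\phi(Q)$ occur near $a_1$ and $a_2$, outside $\bigcup V_j$.
\item The hedges about degenerate zeros or $g'(a)=0$ are unnecessary, because \ref{hyp:g:shape} already gives $g'(0)<0$, $g'(a)>0$ and $g'(1)<0$.
\end{itemize}
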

In contrast to Keener's paper \cite{Keener1987}, which described the topological semiconjugacy with the two-letter alphabet $\Sigma_2=\{0,1\}^\Z$, this statement with the three-letter alphabet $\Sigma_3$ extends the result so that $\tau$ is a homeomorphism. 



Further, we provide a priori estimates for the position of $\tau(s)$ in the square $Q=[0,1]^2$ for a given biinfinite symbolic sequence $s\in\Sigma_3$. This enables us to get a priori estimates for the values $u_i$ of stationary solutions $u=(u_i)$. The localization can be made arbitrarily precise based on the subsequences of symbols from $\Sigma_3$. On the roughest scale, single letters  $s_i = 0,a,1$ imply $u_i\in[0,a_1)$, $u_i\in(a_1,a_2)$, $u_i\in(a_2,1)$, respectively. More detailed arguments involving words $w\in\{0,a,1\}^n$ of length $n\in\N$ provide more precise estimates. Among other things, we show that there are exactly two monotone fronts (up to a translation) satisfying~\eqref{e:connection}.

\begin{thm}[Exactly two monotone pinned waves] \label{t:main-twosol}
Let $g \in C^{1}([0,1]) $ be a bistable nonlinearity satisfying \ref{hyp:g:nodes}--\ref{hyp:g:shape}. Then for every sufficiently small $ d > 0 $, there are \emph{exactly two}  increasing stationary fronts (up to a translation) $\bar{u},\hat{u}$ of \eqref{e:lde:Nagumo} which satisfy \eqref{e:stationary}--\eqref{e:connection}.

Furthermore, the front $\bar{u}$ corresponds to the symbolic sequence $\bar{s}=(\ldots000111\ldots)$ (or its shifts) and satisfies $\bar{u}_i\in[0,a_1)\cup(a_2,1]$ for all $i\in\Z$. Similarly, the front $\hat{u}$ corresponds to the symbolic sequence $\hat{s}=(\ldots000a11\ldots)$ (or its shifts) and satisfies $\hat{u}_i\in(a_1,a_2)$ for exactly one $i\in\Z$ with $\hat{s}_i=a$.
\end{thm}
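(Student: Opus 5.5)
We reduce the statement to symbolic dynamics via Theorem~\ref{t:main-uniqueness}. All bounded stationary solutions with values in $[0,1]$ correspond to orbits of $\phi$ in $Q$. For small $d$ every such orbit lies in $\Lambda$: iterates must stay in $V_0\cup V_a\cup V_1$ by the strip construction. Hence every stationary solution in $[0,1]$ is $u=\tau(s)$ for a unique $s\in\Sigma_3$. The localization attached to single letters then gives $u_i\in[0,a_1)$, $(a_1,a_2)$ or $(a_2,1]$ according as $s_i=0,a,1$. So the task is to determine which $s\in\Sigma_3$ produce increasing $u$ satisfying \eqref{e:connection}.

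\textbf{Step 1 (necessary symbolic form).} Suppose $u$ satisfies \eqref{e:connection}. Then $u$ increases, $u_i\to0$ as $i\to-\infty$ and $u_i\to1$ as $i\to\infty$. Thus $s_i=0$ for $i\ll0$ and $s_i=1$ for $i\gg0$. Monotonicity of $u$, together with the disjointness and ordering of the three intervals, forces $s$ to be nondecreasing in the order $0<a<1$. Hence $s$ is $\ldots000111\ldots$ or $\ldots000a111\ldots$. A block $aa$ must also be excluded, and this is where single-letter localization is not enough. I would rule it out with a two-letter estimate. If $s_i=s_{i+1}=a$, then $u_i,u_{i+1}\in(a_1,a_2)$. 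Since $u$ is increasing with $u_{i-1}<u_i\le u_{i+1}$, we get $d(u_{i-1}-2u_i+u_{i+1})\le d(u_{i+1}-u_i)$. We also get $d(u_i-2u_{i+1}+u_{i+2})\ge -d(u_{i+1}-u_i)$. By \eqref{e:stationary} this yields $g(u_i)\ge -d(u_{i+1}-u_i)$ and $g(u_{i+1})\le d(u_{i+1}-u_i)$, so both $|g(u_i)|$ and $|g(u_{i+1})|$ are $O(d)$. The refined a priori estimates (words of length $n$) place the $a$-values within $O(d)$ of $a$ and say the sector for $aa$ is centered near $(a,a)$. Two $O(d)$-close values in that sector would give an increasing $u$ near $a$ over consecutive indices. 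Combined with $g'>0$ on $(a_1,a_2)$, convexity of $u$ there, since $u_{i-1}-2u_i+u_{i+1}=-g(u_i)/d$, should contradict monotonicity. I expect this exclusion of $aa$ to be the main obstacle; the first thing I would try is the above sign argument on the second differences, using the refined localization.

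\textbf{Step 2 (sufficiency).} Take $\bar s=(\ldots000111\ldots)$ and $\hat s=(\ldots000a11\ldots)$, and set $\bar u=\tau(\bar s)$, $\hat u=\tau(\hat s)$. These are stationary solutions. Continuity of $\tau$ and the fixed points $\tau(\overline{0})=(0,0)$, $\tau(\overline{1})=(1,1)$ give the limits in \eqref{e:connection}. The remaining property to check is monotonicity. On the tails we linearize near $0$ and $1$. There $u_{i+1}-2u_i+u_{i-1}=-g(u_i)/d$, with $g'(0),g'(1)<0$. Near $0$, $u$ is convex and positive, so it decays monotonically as $i\to-\infty$; near $1$ the analogous statement holds. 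For the finitely many middle indices we use the word localization. For length-$n$ words, the values $u_i$ with $s_i=0$ are $O(d)$-close to $0$, but they increase toward the transition. Concretely, $u_{i}\approx d^{k}$ at distance $k$ from the transition, since for small $d$ each step multiplies by roughly $d/|g'(0)|$. Comparing these estimates at consecutive indices gives $u_i<u_{i+1}$. Strict inequalities $u_i<u_{i+1}$ follow because equality would propagate through \eqref{e:stationary} to a constant solution.

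\textbf{Step 3 (uniqueness up to translation).} Since $\tau$ is a bijection and commutes with shifts, distinct symbolic sequences give distinct solutions. Shifts of $s$ correspond exactly to translates of $u$. Steps 1 and 2 then show that the increasing fronts are exactly the translates of $\bar u$ and $\hat u$. The stated localization, including exactly one index with $\hat u_i\in(a_1,a_2)$, is read off from the single-letter estimates.
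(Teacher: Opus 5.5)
\textbf{The gap: excluding blocks $aa$.} Several parts of your route work and differ from the paper's sector geometry. Every $[0,1]$-valued stationary solution does come from an orbit in $\Lambda$, because $\phi^{-1}(Q)\cap Q=V_0\cup V_a\cup V_1$. Nondecreasing $s$ also follows directly from the ordering of $[0,a_1)$, $(a_1,a_2)$, $(a_2,1]$, which is simpler than Lemma~\ref{l:localization:1sector}.

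The gap is the step you flag yourself: ruling out $s=\overline{0}(a)_k\overline{1}$ with $k\ge 2$. This is the heart of the theorem (the paper's Lemmas~\ref{l:localization:2sector} and~\ref{l:at_most_one_a}), and you give no argument for it. Your inequalities $g(u_i)\ge -d(u_{i+1}-u_i)$ and $g(u_{i+1})\le d(u_{i+1}-u_i)$ do not conflict with monotonicity. The mechanism you suggest also fails: there is no uniform convexity on an $a$-block, since $u_{m-1}-2u_m+u_{m+1}=-g(u_m)/d$ has the sign of $a-u_m$.

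The missing idea is to compare the \emph{first} and \emph{last} $a$-sites. There, the large jumps to the neighbouring $0$- and $1$-sites fix the sign of $g$. Use the single-letter localization $u_m\in[0,\ru_2]$, $[\ru_3,\ru_5]$, $[\ru_6,1]$ for $s_m=0,a,1$. Note also that $\ru_2\to 0$, $\ru_3,\ru_5\to a$, $\ru_6\to 1$ as $d\to 0$; this follows from $h(\ru_2)=h(\ru_3)=2$ and $h(\ru_5)=h(\ru_6)=0$, which give $|g|\le 2d$ at these roots. Then an $a$-block on sites $i<j$ yields
\begin{align*}
g(u_i)&=d\bigl[(u_i-u_{i-1})-(u_{i+1}-u_i)\bigr]\ge d\bigl[(\ru_3-\ru_2)-(\ru_5-\ru_3)\bigr]>0,\\
g(u_j)&=d\bigl[(u_j-u_{j-1})-(u_{j+1}-u_j)\bigr]\le d\bigl[(\ru_5-\ru_3)-(\ru_6-\ru_5)\bigr]<0.
\end{align*}
Hence $u_i>a>u_j$, which contradicts monotonicity. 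For $k=1$ we have $i=j$, so there is no contradiction, which is why $\hat s$ survives. Without this argument (or the paper's sector argument), Step~1 does not show that only $\bar s$ and $\hat s$ remain, so the ``exactly two'' half of the theorem is unproved as written.

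\textbf{Step 2 needs repair, but the fix is easy.} The estimate $u_i\approx d^k$ is unproven. The claim that $u_i=u_{i+1}$ would propagate to a constant solution is false: it only gives $u_{i+2}=u_{i+1}-g(u_{i+1})/d$. Neither is needed. Since $g<0$ on $(0,a_1)$ and $g>0$ on $(a_2,1)$, $u$ is strictly convex on the \emph{entire} $0$-block and strictly concave on the entire $1$-block; no linearization is required. Combined with $u_{m+1}-u_m\to 0$ as $m\to\pm\infty$, this makes the increments $u_{m+1}-u_m$ positive for every $m$ up to the last $0$-site, and for every $m$ from the last non-$1$ site onward. For $\bar s$ and $\hat s$ these two ranges cover all of $\Z$, so no ``middle indices'' are left.

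With both repairs, your argument becomes a complete and more elementary alternative to the paper's Lemmas~\ref{l:localization:2sector}--\ref{l:localization:nsector}.
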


In other words, this result shows that the stationary solutions $u$ corresponding to monotone symbolic sequences involving multiple letters $a$, e.g., 
$$
    (\ldots000aa111\ldots),\ (\ldots000aaa111\ldots),\ (\ldots000aaaa111\ldots), \text{ etc.,}
$$
are not monotone, see Figure~\ref{fig:connectionsx4}.

\begin{figure*}[t!]
        \subfloat[Stable monotone $\bar{u}$ corresponding to $\bar{s}=(\ldots0001111\ldots)$.]{%
            \includegraphics[width=.48\linewidth]{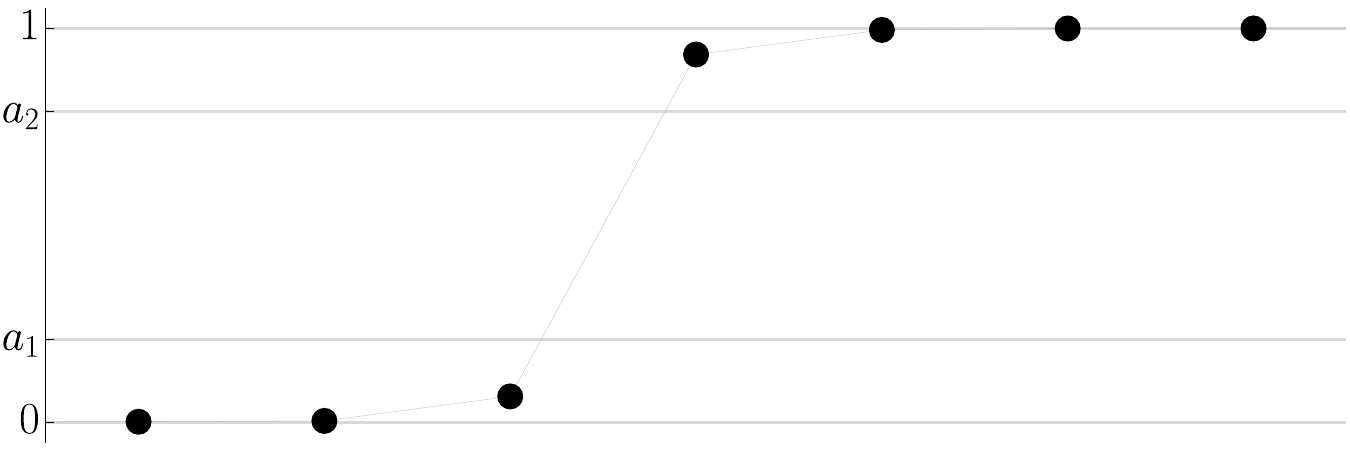}%
        }\hfill
        \subfloat[Unstable monotone $\hat{u}$ corresponding to $\hat{s}=(\ldots000a111\ldots)$.]{%
            \includegraphics[width=.48\linewidth]{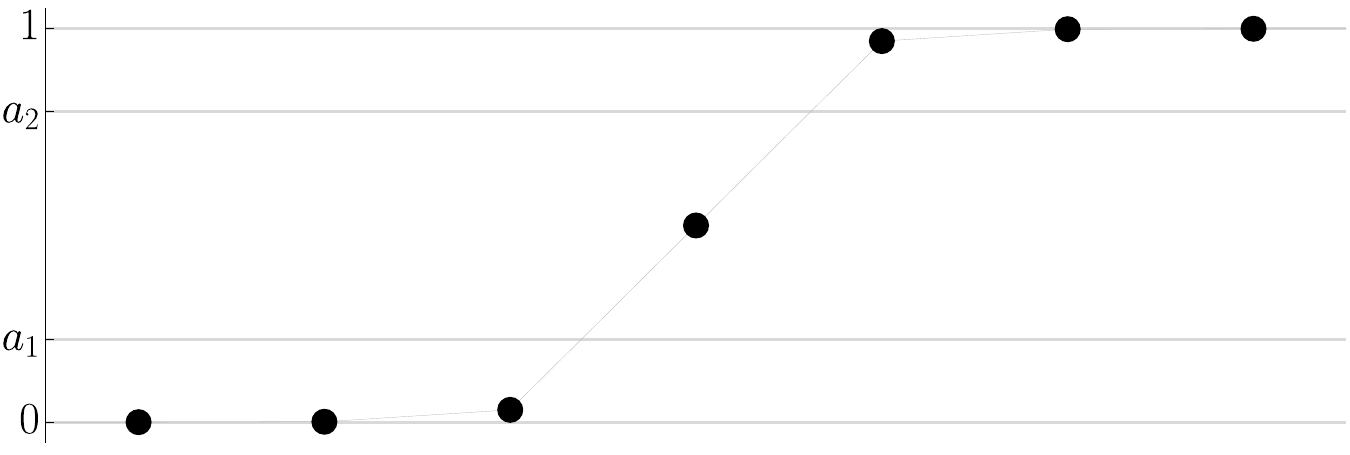}%
        }\\
        \subfloat[Unstable nonmonotone $\tilde{u}$ corresponding to $\tilde{s}=(\ldots000aa11\ldots)$.]{%
            \includegraphics[width=.48\linewidth]{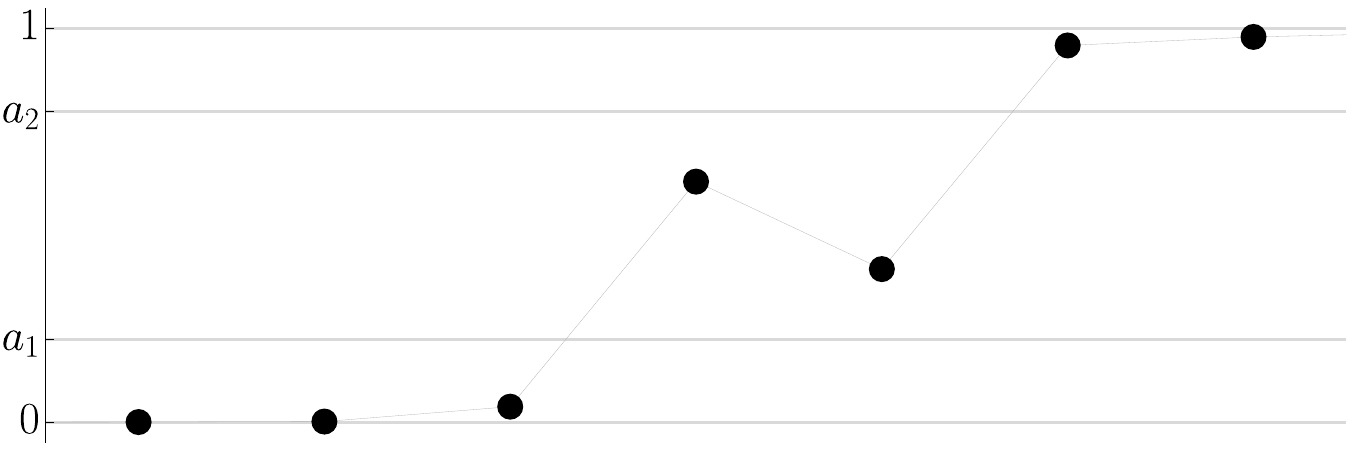}%
        }\hfill
        \subfloat[Unstable nonmonotone $\acute{u}$ corresponding to  $\acute{s}=(\ldots00aaa11\ldots)$.]{%
            \includegraphics[width=.48\linewidth]{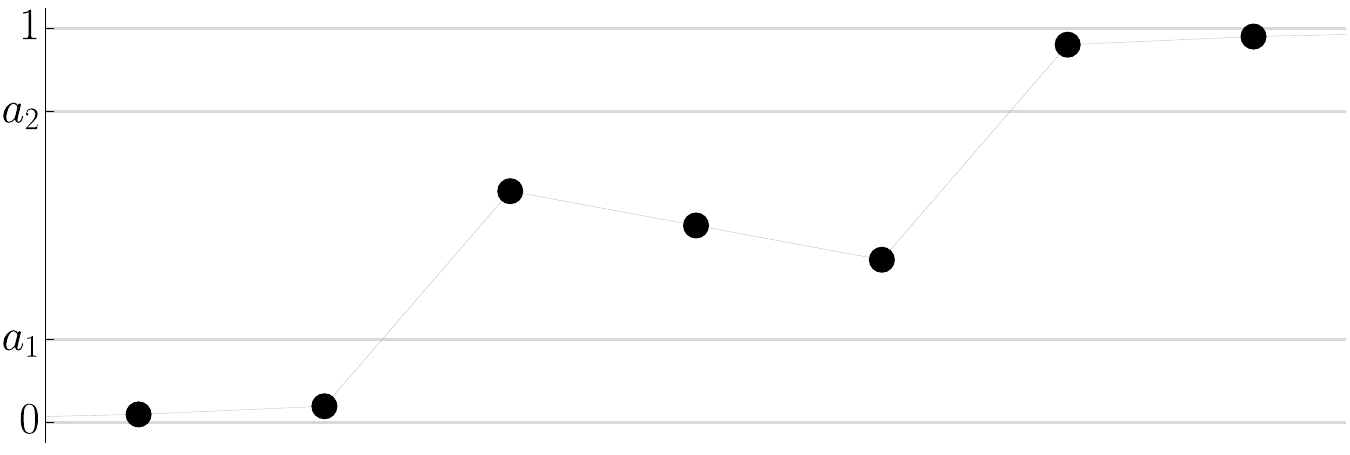}%
        }
        \caption{Illustration of Theorems~\ref{t:main-uniqueness}--\ref{t:main-stability}. Four stationary solutions $u\in[0,1]^\Z$ corresponding to four monotone symbolic sequences $s\in\Sigma_3=\{0,a,1\}^\Z$.}
        \label{fig:connectionsx4}
\end{figure*}

Besides the localization of stationary solutions and the number of monotone fronts, the a priori estimates allow us to characterize the stability of all stationary solutions. 

\begin{thm}[Stability]\label{t:main-stability}
    Let $g \in C^{2}([0,1]) $ be a bistable nonlinearity satisfying \ref{hyp:g:nodes}--\ref{hyp:g:shape}, $d>0$ be sufficiently small, and $u = (u_i)$, $i \in \mathbb{Z}$, be a stationary solution of the LDE~\eqref{e:lde:Nagumo} corresponding to $s\in\Sigma_3$. 
    \begin{enumerate}
        \item[(i)] If $u_i \in [0,a_1) \cup (a_2,1]$ (or equivalently $s_i\in\{0,1\}$) for all $i \in \mathbb{Z}$, then $u$ is locally asymptotically $\ell^2$-stable.
        \item[(ii)] If there exists an index $i_0 \in \mathbb{Z}$ such that $u_{i_0} \in (a_1,a_2)$ (or equivalently $s_{i_0}=a$), then $u$ is $\ell^2$-unstable.
    \end{enumerate}
\end{thm}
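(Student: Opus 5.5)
We linearize the LDE~\eqref{e:lde:Nagumo} around the stationary solution $u$ and study the resulting operator on $\ell^2(\Z)$,
\[
(Lv)_i = d(v_{i-1}-2v_i+v_{i+1}) + g'(u_i)v_i,\qquad L = d\Delta + M,
\]
where $M$ is multiplication by $g'(u_i)$. $L$ is a bounded self-adjoint operator and $\|d\Delta\|\le 4d$. Since $g\in C^2$, the nonlinear remainder $g(u_i+v_i)-g(u_i)-g'(u_i)v_i$ is bounded by $C|v_i|^2$ uniformly in $i$. Because $\|v\|_{\ell^\infty}\le\|v\|_{\ell^2}$, the Nemytskii map is $C^1$ (indeed locally Lipschitz with quadratic remainder) on $\ell^2$. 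The standard principle of linearized stability for semilinear ODEs in a Banach space then applies. If $\sup\sigma(L)<0$, $u$ is locally asymptotically stable. If $\sigma(L)$ contains a point with positive real part, $u$ is unstable; here it suffices to have an isolated positive eigenvalue, or more generally $\sup\sigma(L)>0$ for a self-adjoint bounded generator, via the standard instability theorem (e.g.\ Henry, or Daleckii--Krein).

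\textbf{Step (i).} By Theorem~\ref{t:main-twosol}'s underlying a priori estimates, $s_i\in\{0,1\}$ implies $u_i\in[0,a_1)\cup(a_2,1]$. I need more than this, namely a uniform gap: $u_i$ lies in a $\delta(d)$-neighbourhood of $\{0,1\}$ with $\delta(d)\to0$ as $d\to0$. The localization lemmas give $u_i$ in the thin strips $V_0,V_1$, and these strips shrink to $\{u\approx0\}$ and $\{u\approx1\}$ as $d\to0$, since $u_{i+1}+u_{i-1}-2u_i = -g(u_i)/d$ with $u_{i\pm1}\in[0,1]$ forces $|g(u_i)|\le 2d$. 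Hence $g'(u_i)\le -\kappa<0$ uniformly, with $\kappa=\tfrac12\min\{|g'(0)|,|g'(1)|\}$. Then
\[
\langle Lv,v\rangle \le (4d-\kappa)\|v\|^2 ,
\]
which is negative for small $d$, so $\sup\sigma(L)<0$ and exponential stability follows.

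\textbf{Step (ii).} By the same estimate $|g(u_{i_0})|\le 2d$, together with $u_{i_0}\in(a_1,a_2)$, we get $u_{i_0}$ close to $a$ and hence $g'(u_{i_0})\ge \kappa'>0$ with $\kappa'=\tfrac12 g'(a)$. Take the test vector $v=e_{i_0}$. Then
\[
\langle Lv,v\rangle = -2d+g'(u_{i_0})\ge \kappa'-2d>0,
\]
so $\sup\sigma(L)>0$ by the variational characterization. Moreover, since $L-M$ is small and $g'(u_i)\le-\kappa$ away from the $a$-sites, the essential spectrum of $L$ lies in $(-\infty,0)$. If there are finitely many $a$-sites, Weyl's theorem shows the positive spectrum is discrete and consists of eigenvalues. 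If there are infinitely many, $\sup\sigma(L)>0$ still holds; in that case I would instead prove instability directly, using an energy argument on the perturbation $u+\epsilon e_{i_0}$ and the monotonicity and comparison principle for the LDE.

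\textbf{Main obstacle.} The key uniform estimate is $|g(u_i)|\le 2d$, which turns the symbol information into uniform sign bounds on $g'(u_i)$; I expect this part to be routine. The real difficulty is the nonlinear instability conclusion in (ii) when $\sup\sigma(L)>0$ may lie in continuous spectrum, which happens when the $a$-letters are infinitely many. For this I would try first the comparison-principle route. Choose a sub/supersolution $u+\epsilon\psi$, where $\psi$ is a positive localized approximate eigenvector concentrated at $i_0$ satisfying $L\psi\ge\lambda\psi$ with $\lambda>0$ up to errors controlled by $d$. This forces the trajectory starting at $u+\epsilon\psi$ to leave an $\ell^2$-neighbourhood of $u$, since the $i_0$ component grows until it exits $(a_1,a_2)$.
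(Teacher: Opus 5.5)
Your proposal is essentially the paper's proof. You linearize at $u$ to the bounded self-adjoint operator $A=L+G'(u)$ on $\ell^2(\mathbb{Z})$, show $\langle A\psi,\psi\rangle\le -m\|\psi\|_{\ell^2}^2$ in case (i), use the test vector $e_{i_0}$ to get $\sup\sigma(A)\ge g'(u_{i_0})-2d>0$ in case (ii), and conclude by Henry's linearized stability/instability theorems. Your bound $|g(u_i)|\le 2d$ only replaces the paper's sector localization $u_i\in[0,\ru_2]\cup[\ru_6,1]$, resp.\ $u_{i_0}\in[\ru_3,\ru_5]$, where $g'>3d$ by Lemma~\ref{l:roots}.

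The ``main obstacle'' you flag is not a real one. The paper infers instability directly from $\sigma(A)\cap(0,\infty)\neq\emptyset$ together with the quadratic bound $\|\omega(u,\psi)\|_{\ell^2}\le C\|\psi\|_{\ell^2}^2$, with no isolated eigenvalue and no case split on the number of $a$-sites. For self-adjoint $A$ you can also argue directly by starting from data in the range of the spectral projection onto $[\sup\sigma(A)-\eta,\sup\sigma(A)]$. Your comparison fallback also works as stated with $\psi=e_{i_0}$, and no approximate eigenvector is needed. Indeed, $u+\epsilon e^{\lambda' t}e_{i_0}$ with $0<\lambda'<g'(u_{i_0})-2d$ is an exact subsolution until $\epsilon e^{\lambda' t}$ reaches a fixed threshold. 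That already gives instability; the $i_0$-component does not need to leave $(a_1,a_2)$.
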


In other words, solutions $u$ corresponding to symbolic sequences $s\in\Sigma_2=\{0,1\}^\Z$ are locally asymptotically $\ell^2$-stable. Those corresponding to sequences $s\in\Sigma_3\setminus \Sigma_2$ (i.e., $s\in\Sigma_3$ with at least one symbol $a$) are $\ell^2$-unstable. We can then refine Theorem~\ref{t:main-twosol} and show that $\bar{u}$ is the unique locally asymptotically $\ell^2$-stable monotone front (up to a translation) and $\hat{u}$ is the unique $\ell^2$-unstable monotone front (up to a translation) of \eqref{e:lde:Nagumo} satisfying \eqref{e:connection}, see Figure~\ref{fig:connectionsx4}.



\paragraph{Symbolic dynamics perspective} Let us rephrase and illustrate our results from the point of view of the symbolic dynamics. We show that the map $\phi$ given by~\eqref{e:map:Keener} is homeomorphic to a variant of Smale's horseshoe map in which the square $Q=[0,1]^2$ is stretched horizontally and then bent twice, see Figure~\ref{fig:symbolic-caricature}. In this settings three vertical strips $V_i$ in the square are mapped onto horizontal strips $U_i$, $i\in\{0,a,1\}$. The repeated fractal-like forward and backward application of this simplified process creates a set of sectors identifiable with words $w\in\{0,a,1\}^{2n}$, $n\in\N$. Moreover, sectors described by letters from the two-letter alphabet $\{0,1\}$ correspond to stable solutions, see Figure~\ref{fig:symbolic-caricature}. Theorems~\ref{t:main-uniqueness}--\ref{t:main-stability} are proved by the analysis of the properties of the nonlinear version of this simplified process generated by the map $\phi$~\eqref{e:map:Keener} with sufficiently small $d$.

\paragraph{Paper structure}
In Section~\ref{sec:symbolic:Moser} we recall basic notions from the symbolic dynamics and formulate Moser's theorem. We then show topological conjugacy of stationary solutions of \eqref{e:lde:Nagumo} and three-letter biinfinite symbolic sequences, and prove Theorem~\ref{t:main-uniqueness} in Section~\ref{sec:uniqueness}. In Section~\ref{sec:monotonicity} we then use a priori estimates to prove Theorem~\ref{t:main-twosol} and show the existence of exactly two monotone fronts satisfying~\eqref{e:connection}. Finally, we analyze the stability of stationary solutions and prove Theorem~\ref{t:main-stability} in Section~\ref{sec:stability}. We conclude with a short discussion in Section~\ref{sec:discussion} where we weaken assumptions on the nonlinearity $g$ and extend Theorems~\ref{t:main-uniqueness}--\ref{t:main-twosol} to Frenkel-Kontorova type models.


\section{Symbolic dynamics and Moser's theorem}\label{sec:symbolic:Moser}
In  Section~\ref{sec:uniqueness} we are going to use Moser's theorem to show that the planar map $\phi$ defined by~\eqref{e:map:Keener} is topologically conjugate with the shift map $\sigma$ acting on the symbolic space of biinfinite sequences of three symbols $\Sigma_3$. In this section we introduce notation and formulate Moser's theorem.

\subsection{Symbolic space}\label{ss:symbolic:space} Throughout the paper we consider a three-letter \emph{alphabet} $\alphabet=\{0,a,1\}$. We kindly ask the reader to take into account the standard abuse of notation, we use $0,a,1$ both for the values in $\Real$ (roots of the bistability $g$, see \ref{hyp:g:nodes}) and symbols from $\alphabet$. We define the \emph{symbolic space of biinfinite sequences} on $\alphabet$ by
\[
\Sigma_3 = \alphabet^\Z = \{s=(\ldots s_{-2}s_{-1}\underline{s_0} s_1 s_2 \ldots): s_i\in\mathcal{A} \}.
\]
We underline the value corresponding to $0$-th entry of the biinfinite sequence $s\in\Sigma_3$. 
We are particularly interested in the (left) \emph{shift map} $\sigma:\Sigma_3\to\Sigma_3$ defined by
\[
	\sigma(s) = \sigma((\ldots s_{-2}s_{-1}\underline{s_0} s_1 s_2 \ldots)) = (\ldots s_{-1} s_0 \underline{s_1} s_2 s_3\ldots).
\]
The shift map $\sigma$ is a continuous and chaotic homeomorphism on $\Sigma_3$, see \cite{Devaney2022}.

\subsection{Moser's theorem}\label{ss:moser:theorem}
Smale's horseshoe \cite{Smale1967} is an elegant construction of the planar map on a unit square $Q=[0,1]^2$ which is topologically conjugate to the shift map $\sigma$. Moser's theorem \cite{Moser1975} provides a generalization for a wider class of planar maps $\phi:\Real^2\to \Real^2$. Before we formulate the theorem, let us introduce horizontal and vertical curves and strips in $Q$.

We say that $v(u)$ is a \emph{(Moser) horizontal curve} in $Q$ if:
\begin{enumerate}[label=(\roman*)]
    \item $0 \leq v(u) \leq 1$ for $0\leq u \leq 1$, and
    \item there exists $\mu\in(0,1)$ such that for all $u_1,u_2\in[0,1]$ we have $|v(u_1)-v(u_2)| \leq \mu |u_1-u_2|$.
\end{enumerate}

Similarly, $u(v)$ is a \emph{(Moser) vertical curve} in $Q$ if:
\begin{enumerate}[label=(\roman*)]
    \item $0 \leq u(v) \leq 1$ for $0\leq v \leq 1$, and
    \item there exists $\mu\in(0,1)$ such that for all $v_1,v_2\in[0,1]$ we have $|u(v_1)-u(v_2)| \leq \mu |v_1-v_2|$.
\end{enumerate}
The assumptions (ii), which are missing, e.g., in Keener \cite{Keener1987}, ensure that the vertical and horizontal curves have a unique intersection in the unit square $Q$, \cite{Moser1975}. Since we are interested in uniqueness and topological conjugacy we consider only Moser horizontal and vertical curves further. Specifically, if $v(u)=\bar{v}$ for all $u\in[0,1]$ (or $u(v)=\bar{u}$ for all $v\in[0,1]$) we speak about \emph{horizontal line} (or \emph{vertical line}).

For two disjoint horizontal curves $0\leq v_1(u) < v_2(u) \leq 1$ we call the set
$$
U = \{(u,v): 0\leq u \leq 1,\quad v_1(u) \leq v \leq v_2(u) \},
$$
a \emph{horizontal strip} in $Q$. Similarly, for two disjoint vertical curves $0\leq u_1(v) < u_2(v) \leq 1$ we call the set
$$
V = \{(u,v): u_1(v) \leq u \leq u_2(v),\quad 0\leq v \leq 1 \},
$$
a \emph{vertical strip} in $Q$. We define the \emph{width} of horizontal and vertical strips by
\[
\delta(U) = \max_{u\in[0,1]} (v_2(u)-v_1(u)),\quad \delta(V) = \max_{v\in[0,1]} (u_2(v)-u_1(v)).
\]

For our purposes we formulate Moser's theorem \cite[Theorem 3.1]{Moser1975} specifically for the three-letter alphabet $\alphabet$.

\begin{thm}[Moser]\label{t:Moser}
If $\phi:\Real^2\to \Real^2$ is a homeomorphism and there exists disjoint horizontal strips $U_i\subset Q$, $i\in \alphabet$, and disjoint vertical strips $V_i\subset Q$, $i\in \alphabet$, such that:
\begin{enumerate}[label=(\roman*)]
    \item $\phi(V_i)=U_i$ for all $i\in\alphabet$,
    \item the vertical boundaries of $V_i$ are mapped onto the vertical boundaries of $U_i$ for all $i\in\alphabet$ and the horizontal boundaries of $V_i$ are mapped onto the horizontal boundaries of $U_i$ for all $i\in\alphabet$,
    \item if $V_j$ is a vertical strip in $\bigcup_{i\in\alphabet} V_i$ then for all $i\in\alphabet$ the set
    $$
    V_{ij} := \phi^{-1} (V_j) \cap V_i
    $$
    is a vertical strip and there exists $\nu\in(0,1)$ such that
    $$
        \delta(V_{ij}) \leq \nu\cdot \delta(V_j),
    $$
    \item if $U_j$ is a horizontal strip in $\bigcup_{i\in\alphabet} U_i$ then for all $i\in\alphabet$ the set
    $$
    U_{ji} := \phi (U_j) \cap U_i
    $$
    is a horizontal strip and there exists $\nu\in(0,1)$ such that
    $$
        \delta(U_{ji}) \leq \nu\cdot \delta(U_j).
    $$
\end{enumerate}
Then the map $\phi$ possesses the shift $\sigma:\Sigma_3\to\Sigma_3$ as a subsystem, i.e., there exists a set $\Lambda\subset Q$ and a homeomorphism $\tau:\Sigma_3\to \Lambda$ such that $\phi(\tau(s)) = \tau(\sigma(s))$ for every $s\in\Sigma_3$.
\end{thm}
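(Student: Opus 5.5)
Although Moser's theorem is quoted from \cite{Moser1975}, here is how I would prove it directly. The idea is to encode each orbit of $\phi$ staying in $\bigcup_{i\in\alphabet}V_i$ by its itinerary. For $s\in\Sigma_3$, $\tau(s)$ should be the unique point $p\in Q$ with $\phi^k(p)\in V_{s_k}$ for all $k\in\Z$; since $\phi(V_i)=U_i$, for $k<0$ this is the same as $\phi^{k+1}(p)\in U_{s_k}$. I will assume, as in Moser's original setting, that the Lipschitz constant $\mu\in(0,1)$ can be taken uniform for all horizontal and vertical curves that appear.

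\emph{Step 1: forward strips.} For a finite word $s_0s_1\ldots s_n$ set
\[
V_{s_0\ldots s_n}=\{p\in Q:\ \phi^k(p)\in V_{s_k},\ k=0,\ldots,n\}=V_{s_0}\cap\phi^{-1}(V_{s_1\ldots s_n}).
\]
I will show by induction on $n$ that this is a vertical strip with $\delta(V_{s_0\ldots s_n})\le\nu^n$. The inductive step applies (iii) to the vertical strip $V_{s_1\ldots s_n}\subset V_{s_1}$, which by the induction hypothesis is a vertical strip of width at most $\nu^{n-1}$; (iii) then gives that $V_{s_0\ldots s_n}$ is a vertical strip of width at most $\nu\cdot\nu^{n-1}$. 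Symmetrically, using (iv) and $\phi(V_i)=U_i$, the sets
\[
U_{s_{-1}\ldots s_{-n}}=\{p:\ \phi^{-k+1}(p)\in U_{s_{-k}},\ k=1,\ldots,n\}
\]
are horizontal strips of width at most $\nu^{n-1}$. Both families are nested and compact.

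\emph{Step 2: limit curves and their intersection.} I will show that $\bigcap_n V_{s_0\ldots s_n}$ is the graph of a vertical curve $u=u_s(v)$. The two boundary curves of $V_{s_0\ldots s_n}$ are $\mu$-Lipschitz and differ by at most $\nu^n$, and the strips are nested, so both boundary curves converge uniformly to a common $\mu$-Lipschitz limit. Similarly, $\bigcap_n U_{s_{-1}\ldots s_{-n}}$ is a horizontal curve $v=v_s(u)$. A point on both curves satisfies $u=u_s(v_s(u))$. The map $u\mapsto u_s(v_s(u))$ sends $[0,1]$ into itself and is a contraction with constant $\mu^2<1$, so by Banach's fixed point theorem the two curves meet in exactly one point. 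I define $\tau(s)$ to be that point and $\Lambda=\tau(\Sigma_3)$. I expect this step, the passage from nested strips to a single point, to be the main technical point, and it is exactly where the Lipschitz condition (ii) in the definition of curves is needed.

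\emph{Step 3: conjugacy and homeomorphism.}
\begin{itemize}
\item \emph{Conjugacy.} $\tau(s)$ is characterised by its itinerary, and the itinerary of $\phi(\tau(s))$ is that of $\tau(s)$ moved one place to the left, i.e.\ the itinerary $\sigma(s)$. By uniqueness, $\phi(\tau(s))=\tau(\sigma(s))$.
\item \emph{Injectivity.} If $s\neq s'$, say $s_k\ne s'_k$, then $\phi^k(\tau(s))\in V_{s_k}$ and $\phi^k(\tau(s'))\in V_{s'_k}$. These strips are disjoint and $\phi$ is a bijection, so $\tau(s)\ne\tau(s')$.
\item \emph{Continuity.} If $s_k=s'_k$ for $|k|\le n$, then $\tau(s)$ and $\tau(s')$ both lie in $V_{s_0\ldots s_n}\cap\phi(U_{s_{-1}\ldots s_{-n}})$, up to the index bookkeeping. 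This set is the intersection of a vertical strip of width at most $\nu^n$ with a horizontal strip of width at most $\nu^{n-1}$. Because $\mu<1$, its diameter is at most $C\nu^{n-1}$, where $C$ depends only on $\mu$.
\end{itemize}
Since $\Sigma_3$ is compact and $Q$ is Hausdorff, the continuous bijection $\tau:\Sigma_3\to\Lambda$ is a homeomorphism.
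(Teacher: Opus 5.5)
Your argument is correct. It is essentially Moser's own proof, whereas the paper gives no proof of its own and simply quotes \cite[Theorem~3.1]{Moser1975}.

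The citation buys brevity. Your reconstruction buys transparency about what is actually used: (iii) and (iv) produce the nested strips of width at most $\nu^n$, (i) turns backward itineraries in the $U_i$ into itineraries in the $V_i$, and disjointness of the $V_i$ gives injectivity. The boundary condition (ii) is never invoked.

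More importantly, it exposes the point you flag at the outset. The paper's definition of Moser curves lets $\mu\in(0,1)$ depend on the curve. Your Step~2 (the contraction $u\mapsto u_s(v_s(u))$ with constant $\mu^2$) and your diameter bound in the continuity step (whose constant blows up as $\mu\to 1$) both need a single $\mu<1$ valid for all the strips $V_{s_0\ldots s_n}$ and $U_{s_{-1}\ldots s_{-n}}$. With only curve-dependent constants, the limit curves are merely guaranteed to be $1$-Lipschitz. Two such curves (e.g.\ both equal to the diagonal) can meet in a segment, and then $\tau(s)$ would not be well defined. So your uniform-$\mu$ reading, which is Moser's, is the right way to read the quoted statement.

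Two minor remarks. First, with your definition $U_{s_{-1}\ldots s_{-n}}$ already encodes $\phi^{-k}(p)\in V_{s_{-k}}$ for $k=1,\ldots,n$. So in the continuity step the relevant set is $V_{s_0\ldots s_n}\cap U_{s_{-1}\ldots s_{-n}}$, without the extra $\phi$. Second, your $\tau$ is normalised by $\phi^k(\tau(s))\in V_{s_k}$. The paper later normalises by $\phi^i(\tau(s))\in U_{s_i}$, cf.\ \eqref{e:Usi}, and the two are related by $\tau_{\mathrm{paper}}=\tau\circ\sigma$. This is harmless for the theorem, but it matters if you carry your $\tau$ into the sectors of Section~\ref{sec:monotonicity}.
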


\begin{rmk}\label{r:Cantor_set}
    In other words Theorem~\ref{t:Moser} implies that there exists a Cantor set $\Lambda\subset Q$ such that for all $s\in\Sigma_3$ there exists a unique $(u,v)\in \Lambda$ with $\tau(s)=(u,v)$. The restriction of $\phi$ on $\Lambda$ is then a homeomorphism topologically conjugate with $\sigma$
    $$
    \begin{array}{ccc}
            \Sigma_3 & \xrightarrow{\sigma} & \Sigma_3\\
            \tau \downarrow & & \downarrow \tau \\
            \Lambda & \xrightarrow{\phi} & \Lambda \\
        \end{array}
    $$

    The contraction assumptions (iii) and (iv) together with the stricter definition of horizontal and vertical curves ensure that the map $\tau$ is a homeomorphism between $\Sigma_3$ and $\Lambda$. Without these we get, that $\tau$ maps biinfinite sequences from $\Sigma_3$ into $Q$, see \cite{Keener1987}.
\end{rmk}

\section{Topological conjugacy}\label{sec:uniqueness}
In this section we verify the assumptions of Theorem~\ref{t:Moser} for the map $\phi$ defined by~\eqref{e:map:Keener}. First, let us define an auxiliary function
\begin{equation}\label{e:h}
    h(u;a,d)=2u - \frac{1}{d}g(u;a),
\end{equation}
and rewrite concisely the map $\phi:\Real^2\to \Real^2$
\begin{equation}\label{e:phi}
    \phi(u,v)=\left(2u - v - \frac{1}{d}g(u;a) , u\right) = \left(h(u;a,d) - v , u\right).
\end{equation}
Its inverse satisfies
\begin{equation}\label{e:phiInv}
    \phi^{-1} (u, v) = \left(v, 2v - u - \frac{1}{d}g(v;a)\right)=(v,h(v;a,d)-u),
\end{equation}
which implies that $\phi$ is a homeomorphism on $\Real^2$.

We are interested in sufficiently small diffusion which implies the existence of three vertical strips required by Theorem~\ref{t:Moser}.

\begin{lem}[Properties of $h$]\label{l:roots}
    Let $g$ satisfy \ref{hyp:g:nodes}--\ref{hyp:g:shape}. Then there exists $d^*(a)$ such that for all $d<d^*(a)$ there are exactly\footnote{We use roman letters $\ru_i$ for roots to distinguish them from entries of the stationary solution $u=(u_i)$, $i\in\Z$, of the LDE~\eqref{e:lde:Nagumo}.}
    \begin{enumerate}[label=(\roman*)]
        \item three roots $0=\ru_0<\ru_5<\ru_6<1$ of $h(u;a,d)=0$,
        \item three roots $0<\ru_1<\ru_4<\ru_7<1$ of $h(u;a,d)=1$,
        \item three roots $0<\ru_2<\ru_3<\ru_8=1$ of $h(u;a,d)=2$.
    \end{enumerate}
    Moreover, the roots $\ru_i$, $i=0,1,\ldots,8,$ satisfy
    \[
    0 = \ru_0 < \ru_1 < \ldots < \ru_7 < \ru_8 =1,
    \]
    and the function $h(u;a,d)$ satisfies
    \[
        h'(u;a,d)\begin{cases}
            > 1, & u\in[0,\ru_2]\cup[\ru_6,1],\\
            < -1, & u\in[\ru_3,\ru_5].
        \end{cases}
    \]
\end{lem}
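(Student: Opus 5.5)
The idea is to treat $h(u;a,d)=2u-\frac1d g(u;a)$ as a singular perturbation of $-\frac1d g$. On any compact set where $g$ is bounded away from zero, $|h|$ is of order $1/d$, so it cannot take the values $0,1,2$ there. All roots of $h=c$, $c\in\{0,1,2\}$, are therefore forced into small neighbourhoods of the zeros $0,a,1$ of $g$. Inside those neighbourhoods $g'$ has a fixed sign, so $h$ is strictly monotone and takes each of the three values exactly once.

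\textbf{Step 1: sign of $h'$.} We have $h'(u)=2-\frac1d g'(u)$. Fix small $\varepsilon>0$ with $[0,\varepsilon]\subset[0,a_1)$, $[a-\varepsilon,a+\varepsilon]\subset(a_1,a_2)$ and $[1-\varepsilon,1]\subset(a_2,1]$. By \ref{hyp:g:shape}, $g'\le -m<0$ on $I_0=[0,\varepsilon]$ and on $I_1=[1-\varepsilon,1]$, and $g'\ge m>0$ on $I_a=[a-\varepsilon,a+\varepsilon]$. Hence for $d<m/3$:
\begin{itemize}
\item on $I_0\cup I_1$ we have $h'>2+3=5>1$, so $h$ is strictly increasing;
\item on $I_a$ we have $h'<2-3=-1$, so $h$ is strictly decreasing.
\end{itemize}

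\textbf{Step 2: no roots on the complement.} On $K=[0,1]\setminus(I_0\cup I_a\cup I_1)$ we have $|g|\ge \eta>0$, with $g>0$ on $(\varepsilon,a-\varepsilon)$ and $g<0$ on $(a+\varepsilon,1-\varepsilon)$ (by \ref{hyp:g:nodes}--\ref{hyp:g:shape}). Therefore $|h|\ge \eta/d-2>2$ on $K$ once $d$ is small, and $h$ attains none of the values $0,1,2$ there.

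\textbf{Step 3: counting roots on each interval.} Evaluate $h$ at the endpoints and take $d$ small enough.
\begin{itemize}
\item On $I_0$: $h(0)=0$ and $h(\varepsilon)=2\varepsilon-g(\varepsilon)/d\to+\infty$, since $g(\varepsilon)<0$. So $h$ increases from $0$ past $2$, giving exactly one root of each of $h=0,1,2$ in $I_0$, ordered $\ru_0=0<\ru_1<\ru_2$.
\item On $I_a$: $h(a-\varepsilon)\to-\infty$ and $h(a+\varepsilon)\to+\infty$ is wrong; the correct signs are $h(a-\varepsilon)\to-\infty$ because $g>0$ there, and $h(a+\varepsilon)\to+\infty$ because $g<0$ there. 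This contradicts $h$ being decreasing on $I_a$.
\end{itemize}
Reconsider the signs. Since $g'>0$ between $a_1$ and $a_2$ with $g(a)=0$, we get $g<0$ just left of $a$ and $g>0$ just right. Thus $g<0$ on $(0,a)$ and $g>0$ on $(a,1)$, consistent with $g'(0)<0$. The correct picture is therefore:
\begin{itemize}
\item $h(\varepsilon)\to+\infty$, matching the increasing behaviour on $I_0$;
\item $h(a-\varepsilon)\to+\infty$ and $h(a+\varepsilon)\to-\infty$, so $h$ decreases through $2,1,0$ on $I_a$, giving $\ru_3<\ru_4<\ru_5$;
\item $h(1-\varepsilon)\to-\infty$ and $h(1)=2$, so $h$ increases through $0,1,2$ on $I_1$, giving $\ru_6<\ru_7<\ru_8=1$.
\end{itemize}
In Step 2 the signs should be flipped accordingly: $h$ is large positive on $(\varepsilon,a-\varepsilon)$ and large negative on $(a+\varepsilon,1-\varepsilon)$. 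This still excludes the values $0,1,2$ there.

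\textbf{Step 4: the derivative bounds.} All roots lie in $I_0\cup I_a\cup I_1$, so $[0,\ru_2]\subset I_0$, $[\ru_6,1]\subset I_1$ and $[\ru_3,\ru_5]\subset I_a$. The bounds on $h'$ in the statement then follow from Step 1. Collecting the three smallness conditions on $d$ gives $d^*(a)$.

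The main obstacle is only bookkeeping: choosing $\varepsilon$ uniformly so that the intervals sit inside the monotonicity regions of \ref{hyp:g:shape}. The sign of $g$ on $(0,a)$ and $(a,1)$ must also be tracked carefully, as the false start above shows.
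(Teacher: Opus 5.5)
Your final argument is correct, but it splits $[0,1]$ differently from the paper. The paper splits at the critical points $a_1,a_2$ of $g$. On $[0,a_1)$ and $(a_2,1]$ one has $g'<0$, so $h'=2-g'/d>2$ for \emph{every} $d>0$. The only smallness needed there is $h(a_1;a,d)>2$ and $h(a_2;a,d)<0$; then $h(0)=0$, $h(1)=2$ and the intermediate value theorem do the counting.

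You instead use $\varepsilon$-neighbourhoods of the zeros $0,a,1$ and show $|h|>2$ on the remainder. On the outer pieces this introduces constants $m,\eta$ that are not needed. On the middle piece, however, it is the right tool. Since $g'(a_1)=g'(a_2)=0$, we have $h'(a_1)=h'(a_2)=2>0$, so $h$ is not monotone on $[a_1,a_2]$. The paper's one-line sketch therefore leaves implicit both the uniqueness of the three roots in $(a_1,a_2)$ and the bound $h'<-1$ on $[\ru_3,\ru_5]$. Your observation supplies both: every root of $h=c$, $c\in\{0,1,2\}$, satisfies $|g(u)|\le 2d$, so it lies in $I_a$, where $g'\ge m>3d$. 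Your route also transfers essentially unchanged to the weakened hypothesis $(g_2')$ in Section~\ref{sec:discussion}, provided $g$ has no zeros in $[0,1]$ other than $0,a,1$.

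In the write-up, delete the false start and fix the signs once, at the outset. By $(g_1)$--$(g_2)$, $g$ is strictly decreasing on $[0,a_1]$, strictly increasing on $[a_1,a_2]$ and strictly decreasing on $[a_2,1]$. Hence $g<0$ on $(0,a)$ and $g>0$ on $(a,1)$. The ``just left/right of $a$'' reasoning alone gives the sign only near $a$. To get it on all of $(0,a)$ and $(a,1)$ you also need $g(0)=g(1)=0$ and the monotonicity on the outer intervals. This sign information is used twice: for $g\neq 0$ on $\overline{K}$ in Step~2, and for the endpoint signs $h(\varepsilon),h(a-\varepsilon)>2$ and $h(a+\varepsilon),h(1-\varepsilon)<0$ in Step~3.
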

\begin{proof}
    The proof follows from the fact that for sufficiently small $d>0$ the assumptions \ref{hyp:g:nodes}--\ref{hyp:g:shape} ensure $h(a_1;a,d)>2$, $h(a_2;a,d)<0$ and 
    \[
    h'(u;a,d) = 2 - \frac{1}{d} g'(u;a).
    \qedhere
    \]
\end{proof}

\begin{figure}
    \centering
    \includegraphics[width=0.7\linewidth]{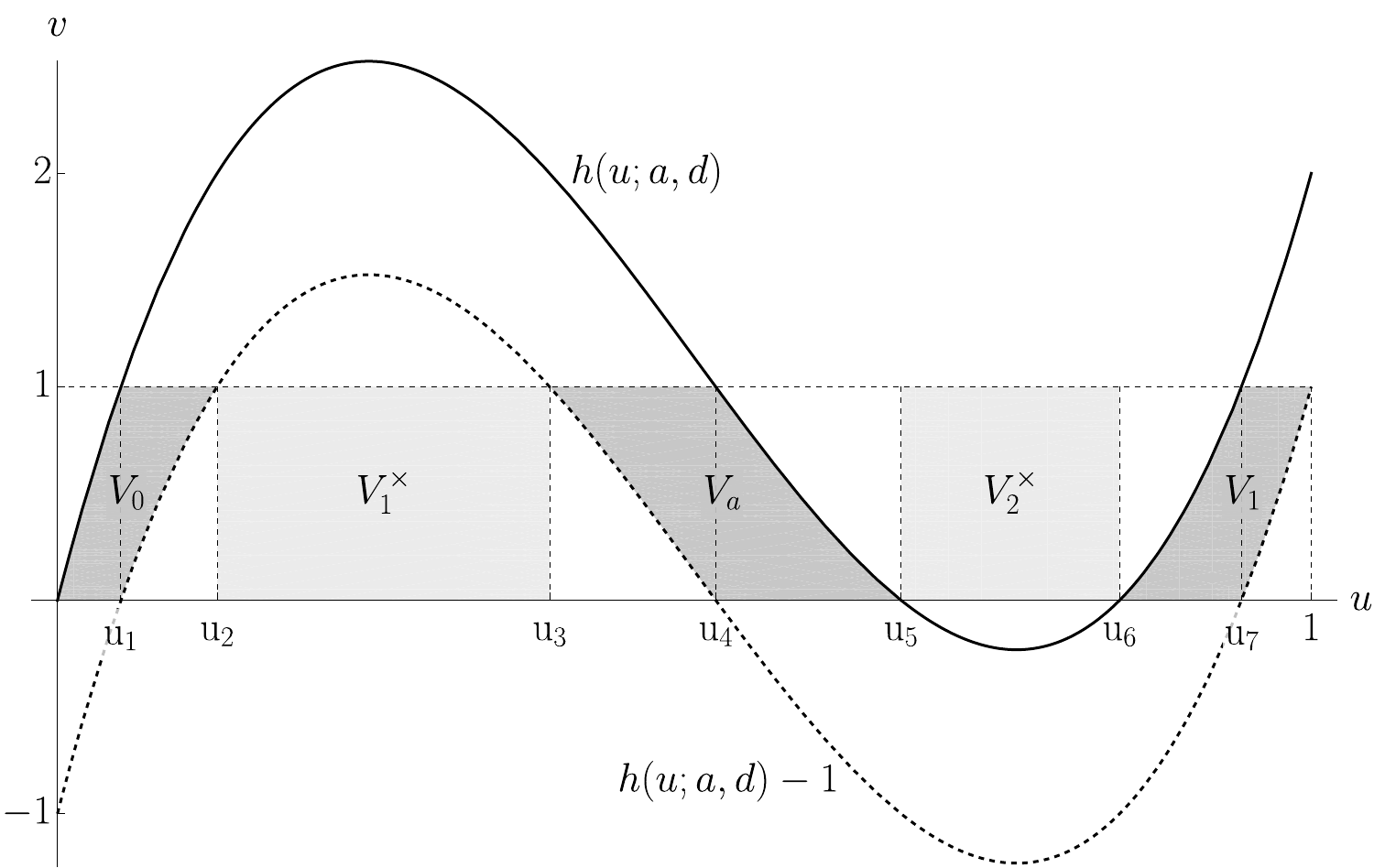}
    \caption{Roots $\ru_i$, $i=0,\ldots, 8$, from Lemma~\ref{l:roots}, vertical strips $V_i$, $i\in\alphabet$, defined by \eqref{e:curves:vetrical:strip:1}--\eqref{e:curves:vetrical:strip:12} and vertical strips $\fstrip_j$, $j\in\{1,2\},$ defined by \eqref{e:fstripL}--\eqref{e:fstripR}.}
    \label{fig:strips:Vi}
\end{figure}

The roots $\ru_i$, $i=0,1,\ldots,8,$ are depicted in Figure~\ref{fig:strips:Vi}. We now define the vertical strip $V_0$ whose boundaries are given by a pair of horizontal curves (the upper indices $\{d, u\}$ correspond to down and up) and a pair of vertical curves (the upper indices $\{l, r\}$ correspond to left and right)
\begin{align}
    \gamma_0^l&=\{(u,v):u\in[0,\ru_1],v=h(u;a,d)\},\label{e:curves:vetrical:strip:1}\\
    \gamma_0^d&=\{(u,v):u\in[0,\ru_1],v=0\},\\
    \gamma_0^r&=\{(u,v):u\in[\ru_1,\ru_2],v=h(u;a,d)-1\},\\
    \gamma_0^u&=\{(u,v):u\in[\ru_1,\ru_2],v=1\}.
\end{align}
The vertical strip $V_a$ is defined by the curves
\begin{align}
    \gamma_a^l&=\{(u,v):u\in[\ru_3,\ru_4],v=h(u;a,d)-1\},\\
    \gamma_a^u&=\{(u,v):u\in[\ru_3,\ru_4],v=1\},\\
    \gamma_a^r&=\{(u,v):u\in[\ru_4,\ru_5],v=h(u;a,d)\},\\
    \gamma_a^d&=\{(u,v):u\in[\ru_4,\ru_5],v=0\},
\end{align}
and the vertical strip $V_1$ by the curves
\begin{align}
    \gamma_1^l&=\{(u,v):u\in[\ru_6,\ru_7],v=h(u;a,d)\},\\
    \gamma_1^d&=\{(u,v):u\in[\ru_6,\ru_7],v=0\},\\
    \gamma_1^r&=\{(u,v):u\in[\ru_7,1],v=h(u;a,d)-1\},\\
    \gamma_1^u&=\{(u,v):u\in[\ru_7,1],v=1\}.\label{e:curves:vetrical:strip:12}
\end{align}
The horizontal strips $U_i$, $i\in\alphabet$, are defined by the curves $\Gamma_i^j$ which are inverse curves to $\gamma_i^j$,
\begin{equation}\label{e:curves:horizontal:strips}
    \Gamma_i^d := (\gamma_i^l)^{-1},\ \Gamma_i^l := (\gamma_i^d)^{-1},\ \Gamma_i^u := (\gamma_i^r)^{-1},\ \Gamma_i^r := (\gamma_i^u)^{-1},\quad i\in\alphabet.
\end{equation}

\begin{lem}[Mapping vertical strips onto horizontal strips]\label{l:vertical:strips:onto:horizontal}
    The horizontal strips $V_i$ and $U_i$, $i\in\alphabet$, defined by \eqref{e:curves:vetrical:strip:1}--\eqref{e:curves:horizontal:strips} satisfy:
    \begin{enumerate}[label=(\roman*)]
        \item $\phi(V_i)=U_i$ for each $i\in\alphabet$,
        \item the vertical boundaries of $V_i$ are mapped onto the vertical boundaries of $U_i$ for each $i\in\alphabet$ and the horizontal boundaries of $V_i$ are mapped onto the horizontal boundaries of $U_i$ for each $i\in\alphabet$.
    \end{enumerate}
\end{lem}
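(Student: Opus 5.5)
Since $\phi$ is a homeomorphism of $\Real^2$ (by \eqref{e:phi}--\eqref{e:phiInv}), it maps the boundary of a Jordan domain onto the boundary of its image. So I will first show that $\phi$ maps each of the four boundary curves of $V_i$ onto the prescribed boundary curve of $U_i$ from \eqref{e:curves:horizontal:strips}, which is statement (ii), and then deduce $\phi(V_i)=U_i$ from a Jordan-curve argument. The key observation is that $\phi(u,v)=(h(u)-v,u)$, so a curve in the $(u,v)$-plane written as a graph over $u$ goes to a curve in the image plane written as a graph over the second coordinate. In particular, the graph $v=h(u)-c$ goes to the horizontal line at height $c$, and the horizontal line $v=c$ goes to a graph over the second coordinate.

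\textbf{Curve-by-curve computation.} For $V_0$:
\begin{itemize}
\item $\gamma_0^l=\{v=h(u),\ u\in[0,\ru_1]\}$ maps to $\{(0,u):u\in[0,\ru_1]\}$.
\item $\gamma_0^r=\{v=h(u)-1,\ u\in[\ru_1,\ru_2]\}$ maps to $\{(1,u):u\in[\ru_1,\ru_2]\}$.
\item $\gamma_0^d=\{v=0\}$ maps to $\{(h(u),u):u\in[0,\ru_1]\}$.
\item $\gamma_0^u=\{v=1\}$ maps to $\{(h(u)-1,u):u\in[\ru_1,\ru_2]\}$.
\end{itemize}
Write $(x,y)$ for image coordinates. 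The last two images are the curves $x=h(y)$ and $x=h(y)-1$, i.e.\ the reflections $(\gamma_0^d)^{-1}$-type curves of \eqref{e:curves:horizontal:strips}. By Lemma~\ref{l:roots}, $h$ is monotone with $|h'|>1$ on each relevant interval, and the endpoints satisfy $h(0)=0$, $h(\ru_1)=1$, $h(\ru_2)=2$. Hence these two curves are graphs over $y\in[0,1]$ of functions with slope bounded by $1/|h'|<1$. So they are Moser horizontal curves, and the pieces of the vertical lines $x=0$, $x=1$ are the vertical sides. The same computation applies to $V_a$ on $[\ru_3,\ru_5]$, where $h$ is decreasing, and to $V_1$ on $[\ru_6,1]$. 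The only change for $V_a$ is that decreasing $h$ swaps which side becomes up and which becomes down, consistent with the swapped naming in the definition of $V_a$. I will also check that the corners match, e.g.\ $(\ru_1,0)$ and $(\ru_1,1)$ are common endpoints and their images are the corners $(1,\ru_1)$ and $(0,\ru_1)$. This makes the image of $\partial V_i$ the closed curve $\partial U_i$ traversed in order.

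\textbf{From boundaries to strips.} $V_i$ is a closed Jordan domain bounded by $\gamma_i^l\cup\gamma_i^d\cup\gamma_i^r\cup\gamma_i^u$. Since $\phi$ is a homeomorphism of the plane, $\phi(V_i)$ is the closed bounded region enclosed by $\phi(\partial V_i)=\partial U_i$, which is $U_i$ (Jordan–Schoenflies). Alternatively, without topology: a point $(u,v)\in V_i$ iff $u$ lies between the left and right curves at height $v$. Transporting this via $x=h(u)-v$, $y=u$ gives exactly $y\in[\ru_{\cdot},\ru_{\cdot}]$ and $x\in[0,1]$ between the image curves.

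\textbf{Main obstacle.} The main difficulty is not analytic but bookkeeping. I must verify that the left/right and up/down labels in \eqref{e:curves:horizontal:strips} agree with the actual images, especially for the decreasing branch $V_a$. I must also confirm that $V_i$ is genuinely a vertical strip, i.e.\ that its left and right boundaries are Moser vertical curves $u=h^{-1}(v)$ and $u=h^{-1}(v+1)$ with slope $<1$, using $|h'|>1$ on $[0,\ru_2]$, $[\ru_3,\ru_5]$ and $[\ru_6,1]$ from Lemma~\ref{l:roots}.
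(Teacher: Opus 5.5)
Your proposal is correct and is essentially the paper's argument written out. The paper appeals to two facts: the arcs $\Gamma_i^j$ are reflections under $\rho(u,v)=(v,u)$ of the boundary arcs of $V_i$, and $\phi^{-1}=\rho\circ\phi\circ\rho$ (i.e.\ $\rho\circ\phi\colon(u,v)\mapsto(u,h(u;a,d)-v)$ is an involution). Your arc-by-arc computation, together with your coordinate-transport step (this involution maps $V_i$ onto itself, hence $\phi(V_i)=\rho(V_i)=U_i$), is what makes that sketch rigorous.

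One bookkeeping correction: on $V_a$ one gets $\phi(\gamma_a^l)=\Gamma_a^r$, $\phi(\gamma_a^r)=\Gamma_a^l$, $\phi(\gamma_a^u)=\Gamma_a^d$, $\phi(\gamma_a^d)=\Gamma_a^u$. So both the left/right and the up/down labels are exchanged, not only up/down, and the superscripts do not match. This is harmless, because (ii) only requires vertical boundaries to go onto vertical ones and horizontal boundaries onto horizontal ones.
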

\begin{proof}
    The proof follows from the fact that $\Gamma_i^j$ are inverse curves of $\gamma_i^j$ for each $i\in\alphabet,$ $j\in\{l, d, r, u\}$. Moreover, the maps $\phi$ and $\phi^{-1}$ are symmetric in the following sense. If we define the reflection map $\rho:Q\to Q$ by $\rho(u,v)=(v,u)$ we have
    \[
            \phi^{-1}(u,v)=\rho(\phi(\rho(u,v))),
    \]
    since
    \[
        \rho(\phi(\rho(u,v))) = \rho(\phi(v,u)) = \rho(h(v;a,d)-u,u) = (u,h(v;a,d)-u) = \phi^{-1}(u,v).
    \qedhere
    \]
    
\end{proof}

In order to prove the contraction of vertical strips (assumption (iii) in Moser's Theorem~\ref{t:Moser}) we show that preimages of vertical lines are vertical shifts of the function $h(u;a,d)$.
\begin{lem}[Preimages of vertical lines]\label{l:preimages:vertical:lines}
    Let $\gamma^v$ be a vertical line
    $$
    \gamma^v= \{(u,v): u=\bar{u}, v\in[0,1] \}.
    $$
    Then its preimage is a curve
    $$
    \phi^{-1}(\gamma^v)=\{(u,v): u\in[0,1], v=h(u;a,d)-\bar{u} \}.
    $$
\end{lem}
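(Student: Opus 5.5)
The claim follows directly from the explicit formula \eqref{e:phiInv} for $\phi^{-1}$. Instead of computing $\phi^{-1}(\gamma^v)$ forward, it is cleaner to characterize the set $\{(u,v):\phi(u,v)\in\gamma^v\}$. A point $(u,v)$ lies in $\phi^{-1}(\gamma^v)$ exactly when $\phi(u,v)\in\gamma^v$. By \eqref{e:phi} we have $\phi(u,v)=(h(u;a,d)-v,\,u)$, so this membership means two things:
\begin{itemize}
\item the first component equals $\bar u$, that is, $h(u;a,d)-v=\bar u$, or equivalently $v=h(u;a,d)-\bar u$;
\item the second component lies in $[0,1]$, that is, $u\in[0,1]$.
\end{itemize}
Both inclusions follow from this equivalence, since $\phi$ is a bijection with inverse \eqref{e:phiInv}.

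As a cross-check, one can also compute directly. For $(\bar u,t)\in\gamma^v$ with $t\in[0,1]$, formula \eqref{e:phiInv} gives $\phi^{-1}(\bar u,t)=(t,\,h(t;a,d)-\bar u)$. Renaming the parameter $t$ as $u$ yields exactly the claimed graph $v=h(u;a,d)-\bar u$ over $u\in[0,1]$. Conversely, every point $(u,h(u;a,d)-\bar u)$ with $u\in[0,1]$ arises in this way from $t=u$.

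There is no real obstacle here. The only subtlety is the convention: the preimage is taken in $\Real^2$, not intersected with $Q$. So the curve is the full graph over $u\in[0,1]$, even though $h(u;a,d)-\bar u$ leaves $[0,1]$ for some $u$. This is consistent with the later use of the lemma, where the graph is intersected with the strips $V_i$, in the same way that $\gamma_0^l$ and $\gamma_0^r$ correspond to $\bar u=0$ and $\bar u=1$.
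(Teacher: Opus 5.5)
Your proof is correct and matches the paper's, which just applies the explicit formula \eqref{e:phiInv} for $\phi^{-1}$. Your direct computation $\phi^{-1}(\bar u,t)=(t,h(t;a,d)-\bar u)$ is exactly that argument, and the characterization via $\phi(u,v)\in\gamma^v$ is an equivalent restatement of it.
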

\begin{proof}
    We apply directly the definition~\eqref{e:phiInv} of $\phi^{-1}$.
\end{proof}

Similarly, the images of horizontal lines are horizontal shifts of the curve parameterized by $(h(v;a,d), v)$, $v\in[0,1]$.

\begin{lem}[Images of horizontal lines]\label{l:images:horizontal:lines}
    Let $\gamma^h$ be a horizontal line
    $$
    \gamma^h= \{(u,v): u\in[0,1], v=\bar{v} \}.
    $$
    Then its image is a curve
    $$
    \phi(\gamma^h)= \{(u,v): u=h(v;a,d)-\bar{v}, v\in[0,1] \}.
    $$
\end{lem}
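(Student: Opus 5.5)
The plan is to compute the image directly from the formula \eqref{e:phi} for $\phi$, in the same way as Lemma~\ref{l:preimages:vertical:lines} was obtained from \eqref{e:phiInv}. Alternatively, we can transport Lemma~\ref{l:preimages:vertical:lines} through the reflection symmetry $\phi^{-1}=\rho\circ\phi\circ\rho$ from the proof of Lemma~\ref{l:vertical:strips:onto:horizontal}. I would give the direct computation and mention the symmetry as a consistency check.

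For the direct computation, parametrize $\gamma^h$ by its first coordinate $u\in[0,1]$, with the second coordinate fixed at $\bar v$. By \eqref{e:phi} we get
\[
\phi(u,\bar v)=\bigl(h(u;a,d)-\bar v,\;u\bigr).
\]
Call the image point $(U,V)$. Its second coordinate is $V=u$, so $V$ runs over all of $[0,1]$ exactly once as $u$ does. Substituting $u=V$ into the first coordinate gives $U=h(V;a,d)-\bar v$. Hence
\[
\phi(\gamma^h)=\{(U,V): U=h(V;a,d)-\bar v,\ V\in[0,1]\}.
\]
After renaming the variables back to $(u,v)$, this is exactly the claimed set. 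Both inclusions are immediate, since the parametrization is a bijection between $[0,1]$ and the curve.

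As a check, apply the symmetry argument. Since $\rho$ is an involution, $\phi=\rho\circ\phi^{-1}\circ\rho$. Moreover, $\rho(\gamma^h)$ is the vertical line $\{u=\bar v\}$. By Lemma~\ref{l:preimages:vertical:lines}, its preimage under $\phi$ is the graph $v=h(u;a,d)-\bar v$. Reflecting this graph with $\rho$ yields the curve $u=h(v;a,d)-\bar v$, the same answer.

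There is no real obstacle in this proof. The only point needing care is notation: in the claimed set, the letter $v$ is the running coordinate of the image curve, while $\bar v$ is the fixed height of the original line. I would keep these apart by writing the image coordinates as $(U,V)$ before renaming them. The set is a graph over the vertical coordinate, which is what makes it a horizontal shift of the curve $(h(v;a,d),v)$.
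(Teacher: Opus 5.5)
Your proof is correct and follows the paper's approach. The paper gives no separate proof; it presents the lemma as the analogue of Lemma~\ref{l:preimages:vertical:lines}, which is proved by applying the formula for $\phi^{-1}$ directly, so the intended argument is your direct substitution into \eqref{e:phi}, and your reflection check via $\phi=\rho\circ\phi^{-1}\circ\rho$ is also valid.
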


Specifically, if we consider the vertical lines defined by roots $\ru_i$, $i=0,\ldots, 8$, from Lemma~\ref{l:roots}, i.e.,
\[
\gamma_i^v = \{(u,v): u=\ru_i, v\in[0,1] \}, \quad i=0,\ldots, 8,
\]
then Lemma~\ref{l:preimages:vertical:lines} implies that their preimages $\phi^{-1}(\gamma_i^v)$ are graphs of functions
\begin{equation}\label{e:vi_curves}
    \rv_i(u)=h(u;a,d)-\ru_i,\quad u\in[0,1],\ i=0,\ldots, 8.
\end{equation}
This enables us to locate preimages of vertical strips $V_i$, $i\in\alphabet$. We also study preimages of vertical strips
\begin{align}
    \fstripL &:= \{(u,v):u\in[\ru_2,\ru_3], v\in[0,1] \},\label{e:fstripL}\\
    \fstripR &:= \{(u,v):u\in[\ru_5,\ru_7], v\in[0,1] \},\label{e:fstripR}
\end{align}
see Figure~\ref{fig:strips:Vi}. Note that $\left(Q\setminus \bigcup_{i\in\alphabet} V_i\right) \supset \left(\fstripL\cup\fstripR\right)$. 

\begin{lem}[Preimages of vertical strips]\label{l:preimages:vertical:strips}
Let $V_i$, $i\in\alphabet$, and $\fstrip_j$, $j\in\{1,2\}$, be vertical strips defined by \eqref{e:curves:vetrical:strip:1}--\eqref{e:curves:vetrical:strip:12} and \eqref{e:fstripL}--\eqref{e:fstripR}. Their preimages satisfy:
\begin{enumerate}[label=(\roman*)]
    \item $\phi^{-1}(V_0)\subset\{(u,v): u\in[0,1], v\in[\rv_0(u),\rv_2(u)] \}$,
    \item $\phi^{-1}(\fstripL) = \{(u,v): u\in[0,1], v\in[\rv_2(u),\rv_3(u)] \}$,
    \item $\phi^{-1}(V_a)\subset\{(u,v): u\in[0,1], v\in[\rv_3(u),\rv_5(u)] \}$,
    \item $\phi^{-1}(\fstripR) = \{(u,v): u\in[0,1], v\in[\rv_5(u),\rv_6(u)] \}$,
    \item $\phi^{-1}(V_1)\subset\{(u,v): u\in[0,1], v\in[\rv_6(u),\rv_8(u)] \}$.    
\end{enumerate}    
\end{lem}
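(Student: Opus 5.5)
The key tool is the explicit inverse \eqref{e:phiInv}, $\phi^{-1}(u,v)=(v,h(v;a,d)-u)$. It shows that $(x,y)\in\phi^{-1}(W)$ if and only if $(h(x;a,d)-y,\,x)\in W$. So the preimage of any set lying in $Q$ is described by the first coordinate of its image, $h(x;a,d)-y$, together with the constraint that the second coordinate $x$ lies in the admissible $v$-range. I would start from Lemma~\ref{l:preimages:vertical:lines} and its consequence \eqref{e:vi_curves}: the preimage of the vertical line $u=\ru_i$ is the graph $y=\rv_i(x)=h(x;a,d)-\ru_i$. Since $\ru_i$ is increasing in $i$ (Lemma~\ref{l:roots}), the graphs $\rv_0>\rv_1>\dots>\rv_8$ are ordered pointwise; the orientation of the intervals in the statement should be read with this in mind.

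For the full strips (ii) and (iv), take $(x,y)$ with $x\in[0,1]$. Then $(x,y)\in\phi^{-1}(\fstripL)$ if and only if $\phi(x,y)=(h(x)-y,x)$ has first coordinate in $[\ru_2,\ru_3]$. The second coordinate $x\in[0,1]$ is automatically admissible, because $\fstripL$ contains all $v\in[0,1]$. The condition $h(x)-y\in[\ru_2,\ru_3]$ is equivalent to $y$ lying between $\rv_3(x)$ and $\rv_2(x)$. This gives equality of the two sets, and the same argument gives (iv) with $[\ru_5,\ru_7]$. Here I suspect a typo: the statement writes $\rv_6$, but the strip $\fstripR$ is defined with endpoint $\ru_7$. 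I would check \eqref{e:fstripR} against Figure~\ref{fig:strips:Vi}. If the intended strip is $[\ru_5,\ru_6]$, the same computation applies verbatim.

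For the inclusions (i), (iii) and (v), it suffices to observe that each $V_i$ is contained in a full vertical strip. From the boundary curves \eqref{e:curves:vetrical:strip:1}--\eqref{e:curves:vetrical:strip:12}, $V_0\subset\{u\in[\ru_0,\ru_2]\}$, because its left and right boundaries are graphs over $[0,\ru_1]$ and $[\ru_1,\ru_2]$. Likewise $V_a\subset\{u\in[\ru_3,\ru_5]\}$ and $V_1\subset\{u\in[\ru_6,\ru_8]\}$. Each inclusion follows from the definition of the strip as the region bounded by the four curves, together with the monotonicity of $h$ on the relevant intervals from Lemma~\ref{l:roots}. For example, on $[\ru_3,\ru_5]$ the function $h$ is decreasing, with $h(\ru_3)=2$, $h(\ru_4)=1$ and $h(\ru_5)=0$, so the region between $\gamma_a^l$ and $\gamma_a^r$ lies in $\ru_3\le u\le\ru_5$. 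Applying $\phi^{-1}$, which preserves inclusions, and then the full-strip computation above gives $\phi^{-1}(V_i)\subset\{v\text{ between }\rv_{\min}(u)\text{ and }\rv_{\max}(u)\}$ with the stated indices.

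The main obstacle is making the containment $V_i\subset\{u\in[\ru_j,\ru_k]\}$ rigorous. The paper defines $V_i$ only through its four boundary curves, and three facts must be verified: that these curves are Moser vertical and horizontal (slopes of $h^{-1}$ bounded by $1$ in absolute value, since $|h'|>1$ on the relevant intervals); that they close up at the corners (for example, $h(\ru_1)=1$, so $\gamma_0^l$ meets $\gamma_0^r$ at height $0$); and that the enclosed region is the one between the left and right vertical curves. Once that is settled, everything else reduces to the one-line computation with $\phi^{-1}$.
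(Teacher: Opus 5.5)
Your proof is correct and follows the paper's route. Both reduce (i)--(v) to the preimage formula of Lemma~\ref{l:preimages:vertical:lines} together with the containments $V_0\subset[\ru_0,\ru_2]\times[0,1]$, $V_a\subset[\ru_3,\ru_5]\times[0,1]$ and $V_1\subset[\ru_6,\ru_8]\times[0,1]$. Your two caveats are justified: $\rv_0>\rv_1>\dots>\rv_8$ means the intervals must be read with their endpoints reversed, and $\fstripR$ is evidently meant to be the gap $[\ru_5,\ru_6]\times[0,1]$ between $V_a$ and $V_1$ (with $\ru_7$ it would cut into $V_1$).

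One slip in your last paragraph: $\gamma_0^l$ and $\gamma_0^r$ never meet, since $\gamma_0^l$ ends at $(\ru_1,1)$ on $\gamma_0^u$ while $\gamma_0^r$ starts at $(\ru_1,0)$ on $\gamma_0^d$. Also, the containments you call the main obstacle are immediate from the strict monotonicity of $h$ in Lemma~\ref{l:roots}, which makes $\gamma_i^l,\gamma_i^r$ graphs over $v\in[0,1]$ with the stated $u$-ranges; the Moser slope bound is not needed for them.
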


\begin{proof}
    The proof follows from Lemma~\ref{l:preimages:vertical:lines}, the continuity of $\phi^{-1}$, definitions \eqref{e:fstripL}--\eqref{e:fstripR}, and the fact that
    \begin{align*}
        V_0 &\subset \{(u,v): u\in[\ru_0=0,\ru_2], v\in[0,1] \},\\
        V_a &\subset \{(u,v): u\in[\ru_3,\ru_5], v\in[0,1] \},\\
        V_1 &\subset \{(u,v): u\in[\ru_6,\ru_8=1], v\in[0,1] \}.
        \qedhere
    \end{align*}        
\end{proof}


\begin{figure}
    \centering
    \includegraphics[width=0.7\linewidth]{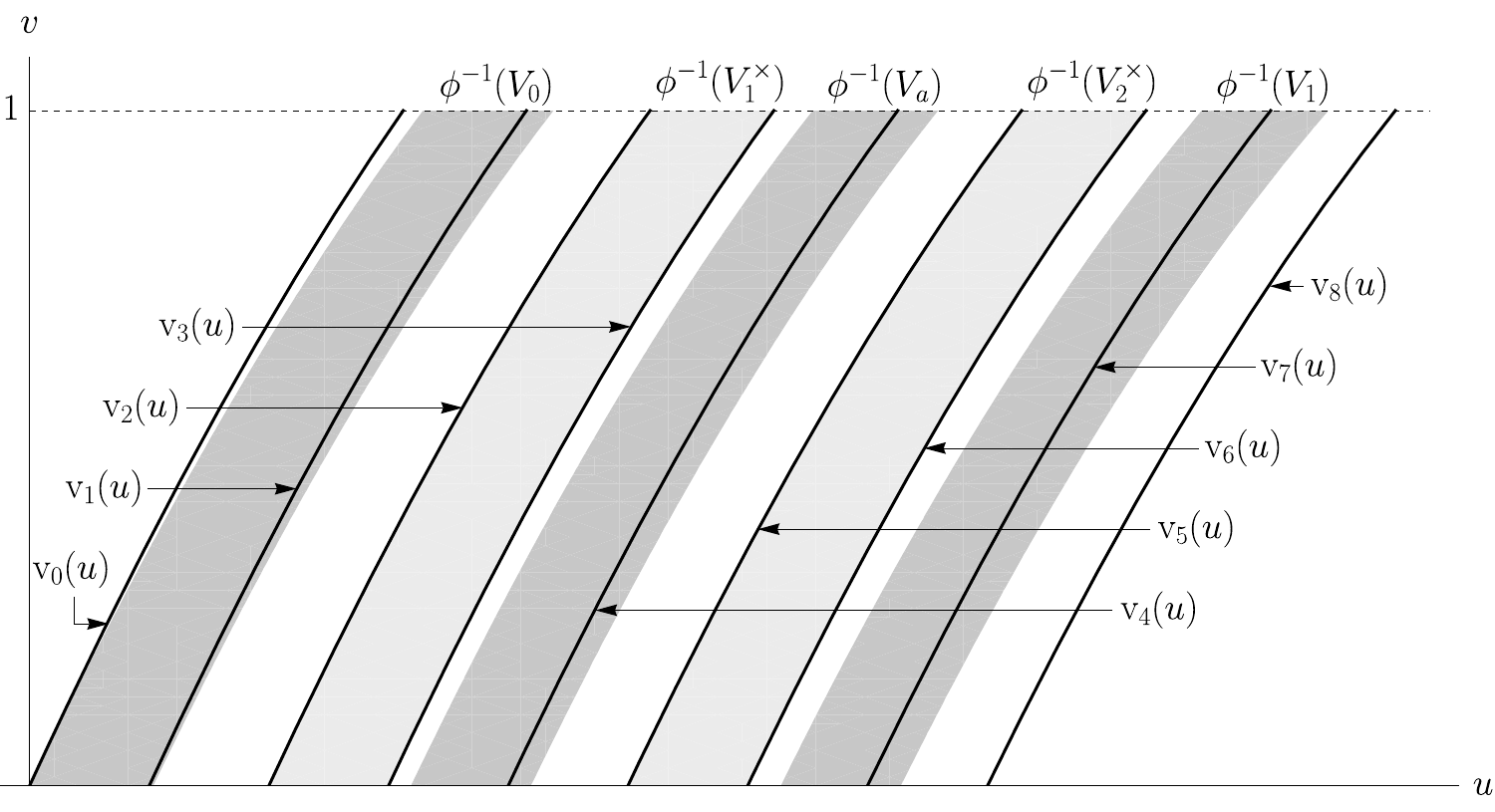}
    \caption{Illustration of Lemma~\ref{l:preimages:vertical:strips}. Preimages of vertical strips $V_i$, $i\in\alphabet$, and $\fstrip_j$, $j\in\{1,2\}$ (depicted only in the intersection with the strip $V_0$ for clarity).}
    \label{fig:preimages:vertical:strips}
\end{figure}

See Figure~\ref{fig:preimages:vertical:strips} for illustration of Lemma~\ref{l:preimages:vertical:strips}. Before we analyze widths of vertical strips $\phi^{-1} (V_j) \cap V_i$, let us state an auxiliary result.
\begin{lem}[Width of vertical strips defined by shifted functions]\label{l:width:vertical:strips:shifted:functions}
    Let $\varphi\in C([\alpha,\beta])$ be a strictly monotone function and $\psi(x)=\varphi(x)+c$, $c\neq 0$. Then there exists $\delta>0$ such that
    \[
        |\varphi^{-1}(y)-\psi^{-1}(y)| \geq \delta >0 \quad \text{for all} \quad y\in \rng(\varphi)\cap \rng(\psi).
    \]    
\end{lem}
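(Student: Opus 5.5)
We would argue by compactness: the quantity $|\varphi^{-1}(y)-\psi^{-1}(y)|$ will be shown to be a continuous, strictly positive function on a compact set, so its minimum gives $\delta$. If $\rng(\varphi)\cap\rng(\psi)$ is empty there is nothing to prove, so we assume it is nonempty.

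First we record the basic properties of the two functions. Since $\varphi$ is continuous and strictly monotone on $[\alpha,\beta]$, its range is a compact interval $\rng(\varphi)=[m,M]$, and $\varphi^{-1}:[m,M]\to[\alpha,\beta]$ is continuous; continuity holds because a continuous bijection from a compact set onto a Hausdorff space is a homeomorphism. Likewise $\psi=\varphi+c$ is continuous and strictly monotone with $\rng(\psi)=[m+c,M+c]$ and continuous inverse $\psi^{-1}(y)=\varphi^{-1}(y-c)$. Hence $I:=\rng(\varphi)\cap\rng(\psi)$ is a compact interval, and the function
\[
F(y)=|\varphi^{-1}(y)-\psi^{-1}(y)| = |\varphi^{-1}(y)-\varphi^{-1}(y-c)|
\]
is continuous on $I$.

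Next we show that $F(y)>0$ for every $y\in I$. Suppose instead that $F(y)=0$, and put $x=\varphi^{-1}(y)=\psi^{-1}(y)$. Then $\varphi(x)=y=\psi(x)=\varphi(x)+c$, which forces $c=0$ and contradicts the hypothesis $c\neq0$. (Equivalently, injectivity of $\varphi^{-1}$ together with $y\neq y-c$ gives $F(y)\neq 0$.)

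Finally, a continuous positive function on a nonempty compact interval attains a positive minimum, and we take $\delta:=\min_{I}F>0$. If a quantitative bound is wanted, strict monotonicity gives $|\varphi(x_1)-\varphi(x_2)|\le\omega(|x_1-x_2|)$, where $\omega$ is the modulus of continuity of $\varphi$; then $|c|\le\omega(F(y))$, which bounds $F$ below uniformly. The only step needing care is the continuity of $\varphi^{-1}$, and the compactness argument in the first step handles it; the rest is routine. In the application, $\varphi=h(\cdot;a,d)$ restricted to a monotone branch and $c=\ru_i-\ru_j$, which yields the nondegenerate separation of the curves $\rv_i$ used later for the strip widths.
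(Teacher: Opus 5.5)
Your proof is correct and takes essentially the same route as the paper. The paper also takes the minimum of the continuous function $|\varphi^{-1}(y)-\psi^{-1}(y)|$ over the compact set $\rng(\varphi)\cap\rng(\psi)$ via Weierstrass' theorem, and it rules out a zero minimum by the same computation $y_0=\psi(\varphi^{-1}(y_0))=y_0+c$; you additionally justify the continuity of $\varphi^{-1}$ and the compactness of the intersection, which the paper leaves implicit. (In your optional quantitative aside, the bound $|\varphi(x_1)-\varphi(x_2)|\le\omega(|x_1-x_2|)$ comes from the uniform continuity of $\varphi$, not from strict monotonicity, but the aside is not needed for the argument.)
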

\begin{proof}
    Let us define a continuous function $\Phi(y):=|\varphi^{-1}(y)-\psi^{-1}(y)|$, $y \in D = \rng(\varphi)\cap \rng(\psi)$. Weierstrass' theorem implies that there exists $y_0\in D$ such that $\delta=\Phi(y_0)=\min_{y\in D} \Phi(y)$. Assume by contradiction that $\delta=0$, i.e., $\varphi^{-1}(y_0)=\psi^{-1}(y_0)$, then
    \[
        y_0 = \psi(\psi^{-1}(y_0))=\psi(\varphi^{-1}(y_0))=\varphi(\varphi^{-1}(y_0))+c = y_0+c,
    \]
    a contradiction.
\end{proof}

Let us estimate the width of preimages of vertical strips $\fstrip_j$.
\begin{lem}[Width of preimages of $\fstripL, \fstripR$]\label{l:width:fstrips}
    There exists $\delta^\times>0$ such that for all $i\in\alphabet$ and all $j\in\{1,2\}$
    \[
        \delta(\phi^{-1}(\fstrip_j) \cap V_i) > \delta^\times.
    \]    
\end{lem}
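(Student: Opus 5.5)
By Lemma~\ref{l:preimages:vertical:strips}(ii) and (iv), the preimages are bounded by the shifted graphs $\rv_k(u)=h(u;a,d)-\ru_k$. Explicitly,
\[
\phi^{-1}(\fstripL)=\{(u,v):\rv_2(u)\le v\le \rv_3(u)\},\qquad
\phi^{-1}(\fstripR)=\{(u,v):\rv_5(u)\le v\le \rv_6(u)\}.
\]
Intersecting with $V_i$ cuts these regions along pieces of the graph of $h$ on the $u$-intervals where $V_i$ lives. There $h$ is strictly monotone:
\begin{itemize}
\item on $[0,\ru_2]$ and $[\ru_6,1]$ we have $h'>1$;
\item on $[\ru_3,\ru_5]$ we have $h'<-1$.
\end{itemize}
So on each of these three $u$-intervals, each bounding curve $v=\rv_k(u)$ can be rewritten as $u=\rv_k^{-1}(v)$. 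Each such function is a vertical curve with Lipschitz constant less than $1$.

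Fix $i\in\alphabet$, $j\in\{1,2\}$, and let $J$ be the corresponding monotonicity interval: $[0,\ru_2]$, $[\ru_3,\ru_5]$ or $[\ru_6,1]$. The width of $\phi^{-1}(\fstrip_j)\cap V_i$ is taken horizontally at fixed $v\in[0,1]$. At such a $v$, the strip is bounded by $u=\varphi^{-1}(v)$ and $u=\psi^{-1}(v)$, where
\[
\varphi=\rv_k|_J,\qquad \psi=\rv_{k'}|_J=\varphi+(\ru_k-\ru_{k'}),\qquad (k,k')=(2,3)\text{ or }(5,6).
\]
The shift $c=\ru_k-\ru_{k'}$ is nonzero by the strict ordering in Lemma~\ref{l:roots}. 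Lemma~\ref{l:width:vertical:strips:shifted:functions} then gives $\delta_{ij}>0$ with $|\varphi^{-1}(v)-\psi^{-1}(v)|\ge\delta_{ij}$ for all $v$ in the common range. The width is a maximum over $v$ of this horizontal extent, so $\delta(\phi^{-1}(\fstrip_j)\cap V_i)\ge\delta_{ij}$. Setting $\delta^\times:=\tfrac12\min_{i,j}\delta_{ij}$, a minimum over six positive numbers, gives the strict inequality.

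The main obstacle is the intersection with $V_i$ itself. The boundaries of $V_i$ are the curves $\gamma_i^l,\gamma_i^r$, i.e.\ $v=h$ or $v=h-1$, and these may clip the region between $\rv_k$ and $\rv_{k'}$. I will check that for each $i,j$ there is at least one $v\in[0,1]$ at which the full segment between $\varphi^{-1}(v)$ and $\psi^{-1}(v)$ lies inside $V_i$.

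\begin{itemize}
\item The numbers $\ru_2,\ru_3,\ru_5,\ru_6$ lie in $(0,1)$, and $V_i$ is bounded by $h-0$ and $h-1$. Hence the curves $\rv_k,\rv_{k'}$ sit strictly between the two vertical boundaries of $V_i$.
\item Their ranges over $J$ overlap in a nondegenerate subinterval of $[0,1]$. This follows because $h(J)\supset[0,2]$ and $\ru_k,\ru_{k'}\in(0,1)$.
\end{itemize}

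Picking $v$ in that overlap completes the argument. If the overlap is not visible directly, I would first make it explicit at $v=1/2$, or at a value of $v$ near the middle of the common range.
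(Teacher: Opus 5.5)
Your proof is correct and follows the paper's route. Like the paper, you restrict $\rv_k$ to the monotonicity branches $[0,\ru_2]$, $[\ru_3,\ru_5]$, $[\ru_6,1]$, apply Lemma~\ref{l:width:vertical:strips:shifted:functions} to the shifted pairs $(\rv_2,\rv_3)$ and $(\rv_5,\rv_6)$, and take a minimum. Two of your steps go beyond what the paper writes but are sound: the check that $V_i$ does not clip the strip (since $h(J)=[0,2]$, the full horizontal segment lies in $V_i$ for every $v\in[0,1]$), and the halving that turns the $\geq$ bound into the strict $>$ of the statement.
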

\begin{proof}
   Lemma~\ref{l:preimages:vertical:strips}(ii) implies that $\phi^{-1}(\fstripL)\cap Q$ form three vertical strips  between functions $\rv_2(u)$ and $\rv_3(u)$. These functions are increasing on $[\ru_0=0,\ru_2]$ and $[\ru_6,\ru_8=1]$ and decreasing on $[\ru_3,\ru_5]$. Therefore, Lemma~\ref{l:width:vertical:strips:shifted:functions} implies that there exists $\delta_1^\times>0$ such that 
   \[
        \delta(\phi^{-1}(\fstripL)\cap V_i) \geq \delta_1^\times \quad \text{for all }i\in\alphabet.
   \]
   Similarly, Lemma~\ref{l:preimages:vertical:strips}(iv) implies the existence of $\delta_2^\times>0$ such that
   \[
        \delta(\phi^{-1}(\fstripR)\cap V_i) \geq \delta_2^\times \quad \text{for all }i\in\alphabet.
   \]
   We define $\delta^\times = \min\{\delta_1^\times,\delta_2^\times\}$ to conclude the proof.
\end{proof}

We can now use this result to estimate the width of preimages of $V_i$, $i\in\alphabet$.
\begin{lem}[Width of preimages of $V_i$]\label{l:width:preimages:Vi}
    There exists $\nu\in(0,1)$ such that the widths of vertical strips $V_{ji}:= \phi^{-1}(V_j)\cap V_i$, $i,j\in\alphabet$, satisfy
    \[
        \delta(V_{ij}) \leq \nu\cdot \delta(V_j)\quad \text{for all } i,j\in\alphabet.
    \]
    
\end{lem}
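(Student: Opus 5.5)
Since $V_j$ is a vertical strip in $\bigcup_k V_k$, it has width $\delta(V_j)$, and by Lemma~\ref{l:preimages:vertical:strips} its preimage $\phi^{-1}(V_j)$ lies between two of the curves $\rv_k$. Intersecting with $V_i$, these curves are graphs over $u$ of strictly monotone functions with $|h'|>1$ (Lemma~\ref{l:roots}). The plan is therefore a counting argument using the fixed-width gaps from Lemma~\ref{l:width:fstrips}, rather than a derivative estimate, which may fail near the boundaries.

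First we pin down the geometry. Inside $V_i$ the strip $\phi^{-1}(V_j)$ is bounded by two vertical curves. Because $\phi^{-1}(u,v)=(v,h(v)-u)$, a point $(u,v)$ lies in $\phi^{-1}(V_j)$ exactly when $(h(u)-v,u)\in V_j$. The horizontal coordinate $u$ of a preimage point is the vertical coordinate of its image, so each horizontal slice $\{v=\bar v\}\cap V_i$ is mapped to the vertical line $\{u=h(\cdot)-\bar v\}$. Along such a slice, $u\mapsto h(u)-\bar v$ has slope of modulus $>1$ on $V_i$. Hence the horizontal width of $V_{ij}$ at height $\bar v$ is at most the width of the $u$-range it must cover in the first coordinate of the image.

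This direct bound gives only $\delta(V_{ij})\le \delta(V_j)/\min|h'|$ with $\min|h'|>1$, except that the slope must be controlled on all of $V_i$. On $[\ru_1,\ru_2]$ and $[\ru_4,\ru_5]$ the derivative is still $>1$ or $<-1$ by Lemma~\ref{l:roots}, so we obtain a uniform $\nu_1=1/\min_{V_0\cup V_a\cup V_1}|h'|<1$. A subtlety is that $V_j$ is an arbitrary subset strip, whose boundary curves have slope $\le\mu<1$ rather than vertical lines. I would handle this by bounding the horizontal segment of $V_{ij}$ at height $\bar v$: its image under $\phi$ is a vertical segment at abscissa $\bar v$ with height $\ell\le\delta(V_j)$ in the sense of crossing $V_j$ horizontally. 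Since the Lipschitz boundaries make horizontal and "along-the-image" widths comparable, we get $\ell\le\delta(V_j)$, up to a factor $1/(1-\mu)$ if needed.

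As a fallback, and to cover the case $V_j=V_k$ itself, I would use Lemma~\ref{l:width:fstrips}. Each $V_i$ decomposes horizontally into the three pieces $V_{ik}$ separated by the two gaps $\phi^{-1}(\fstrip_j)\cap V_i$, each of width $>\delta^\times$. This gives $\delta(V_{ik})\le\delta(V_i)-2\delta^\times$, and combined with $\delta(V_i)\le1$ it yields $\nu_2=\max_i(1-2\delta^\times/\delta(V_i))<1$ for the basic strips. Setting $\nu=\max(\nu_1,\nu_2)$ then completes the proof.

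I expect the main obstacle to be the rigorous comparison of widths for general sub-strips with slanted Lipschitz boundaries. The uniform expansion $|h'|>1$ then has to be transferred to the width functional $\delta$, which is measured horizontally at each fixed $v$. I would first try the slice-by-slice argument above, and fall back on Lemma~\ref{l:width:fstrips} whenever it fails.
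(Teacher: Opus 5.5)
Your fallback paragraph is the paper's proof. The paper writes $\delta(V_{ij})\le\delta(V_j)-2\delta^\times$ using Lemma~\ref{l:width:fstrips} and sets $\nu=\max_j\bigl(1-2\delta^\times/\delta(V_j)\bigr)$, which is your $\nu_2$ after relabeling. The lemma is stated only for $i,j\in\alphabet$, i.e.\ for the three basic strips, so that paragraph alone proves it.

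One point you share with the paper: the gaps are subtracted height by height. So you need each gap $\phi^{-1}(\fstrip_j)\cap V_i$ to have horizontal width at least $\delta^\times$ at \emph{every} $v$, not just maximal width $>\delta^\times$. The proof of Lemma~\ref{l:width:fstrips} via Lemma~\ref{l:width:vertical:strips:shifted:functions} does give this.

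Lead with the fallback and drop $\nu_1$. As written, $\nu=\max(\nu_1,\nu_2)<1$ needs $\nu_1<1$, and your slice argument does not establish it.

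The slice argument has two concrete errors.

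\textbf{First error: the direction is reversed.} We have $\phi(u,\bar v)=(h(u)-\bar v,u)$. So a horizontal slice of $V_i$ goes to the curve $\{x=h(y)-\bar v\}$ of Lemma~\ref{l:images:horizontal:lines}, which crosses the whole square and is nearly horizontal (the stretching). It is $\phi^{-1}$, not $\phi$, that sends $\{y=\bar y\}$ to the vertical line $\{u=\bar y\}$.

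\textbf{Second error: the constant.} Let $V=\{u_1(y)\le x\le u_2(y)\}$ have $\mu$-Lipschitz boundaries. Then the slice of $\phi^{-1}(V)\cap V_i$ at height $\bar v$ is $\{u: F_1(u)\ge 0\ge F_2(u)\}$, where $F_k(u)=h(u)-\bar v-u_k(u)$. Let $m$ be the minimum of $|h'|$ over the $u$-ranges of $V_0,V_a,V_1$. Then $|F_k(u')-F_k(u)|\ge(m-\mu)|u'-u|$, so the slice has length at most $\delta(V)/(m-\mu)$. This is a contraction only if $m>1+\mu$. Lemma~\ref{l:roots} gives only $m>1$, and your proposed factor $1/(1-\mu)$ makes the bound worse, not better.

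\textbf{Repair.} Use small $d$ once more. The ranges $[0,\ru_2]$, $[\ru_3,\ru_5]$, $[\ru_6,1]$ shrink to $0,a,1$, where $g'\neq 0$. Hence $|h'|=|2-g'(u)/d|\to\infty$ uniformly there as $d\to0$. With $m>2$ you get $\nu_1=1/(m-1)<1$. The same estimate bounds the slopes of the new boundary curves by $1/(m-\mu)<1$, which is what makes $\phi^{-1}(V)\cap V_i$ a vertical strip.

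Once repaired, the slice argument is worth keeping, because it does something the gap argument cannot. Assumption (iii) of Theorem~\ref{t:Moser} concerns an arbitrary vertical strip $V\subset V_k$ and asks for a bound proportional to $\delta(V)$ itself. The gap argument (yours and the paper's) gives only the absolute bound $\delta(\phi^{-1}(V)\cap V_i)\le\delta(V_i)-2\delta^\times$, a comparison with the parent strip. That cannot yield a factor of $\delta(V)$ for thin $V$. The expansion estimate $\delta(V)/(m-\mu)$ handles every such $V$ uniformly.
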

\begin{proof}
    Lemma~\ref{l:width:fstrips} implies 
    \[
        \delta(V_{ij}) \leq \delta(V_j)-2\delta^\times = \left(1-\frac{2\delta^\times}{\delta(V_j)}\right)\delta(V_j).
    \]
    We set $\nu=\max_{j\in\alphabet} \left(1-\frac{2\delta^\times}{\delta(V_j)}\right)<1$ to conclude the proof.
\end{proof}

The same argument can be used to obtain contracting property of images of horizontal strips.
\begin{lem}[Width of images of $U_i$]\label{l:width:images:Hi}
    There exists $\nu\in(0,1)$ such that the widths of horizontal strips $U_{ij}:= \phi(U_j)\cap U_i$, $i,j\in\alphabet$, satisfy
    \[
        \delta(U_{ji}) \leq \nu\cdot \delta(U_j)\quad \text{for all } i,j\in\alphabet.
    \]    
\end{lem}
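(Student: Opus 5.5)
The plan is to deduce this lemma from Lemma~\ref{l:width:preimages:Vi} via the reflection symmetry $\rho(u,v)=(v,u)$ established in the proof of Lemma~\ref{l:vertical:strips:onto:horizontal}, rather than redoing the estimates. The key identity is $\phi^{-1}=\rho\circ\phi\circ\rho$. Since $\rho$ is an involution, this also gives $\phi=\rho\circ\phi^{-1}\circ\rho$.

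First I check that $\rho$ exchanges the vertical and horizontal strips. By \eqref{e:curves:horizontal:strips}, the boundary curves $\Gamma_i^j$ of $U_i$ are the inverse curves of $\gamma_i^j$, that is, their reflections under $\rho$. Hence $U_i=\rho(V_i)$ for each $i\in\alphabet$. Moreover, $\rho$ maps a vertical strip bounded by $u_1(v)<u_2(v)$ to the horizontal strip bounded by the graphs $v=u_1(u)$ and $v=u_2(u)$, with the same Lipschitz constants. It therefore preserves the Moser curve conditions and the widths:
\[
\delta(\rho(V))=\delta(V).
\]

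Next I compute
\[
\phi(U_j)\cap U_i=\rho\bigl(\phi^{-1}(\rho(U_j))\bigr)\cap\rho(V_i)=\rho\bigl(\phi^{-1}(V_j)\cap V_i\bigr)=\rho(V_{ij}),
\]
where $V_{ij}$ is as in Lemma~\ref{l:width:preimages:Vi}. By Lemma~\ref{l:width:preimages:Vi}, $V_{ij}$ is a vertical strip. So $U_{ji}=\rho(V_{ij})$ is a horizontal strip, and
\[
\delta(U_{ji})=\delta(V_{ij})\leq\nu\,\delta(V_j)=\nu\,\delta(U_j),
\]
with the same $\nu\in(0,1)$.

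The main obstacle is bookkeeping rather than analysis. One must check carefully that the index conventions ($U_{ji}$ versus $V_{ij}$) match, and that $\rho$ preserves the Moser Lipschitz condition (ii). The latter is immediate, since reflecting a graph across the diagonal swaps the roles of the variables. If one does not want to rely on the symmetry, the fallback is to repeat the argument of Lemmas~\ref{l:width:fstrips}--\ref{l:width:preimages:Vi} directly. In that argument Lemma~\ref{l:images:horizontal:lines} replaces Lemma~\ref{l:preimages:vertical:lines}, so that images of the horizontal lines $v=\ru_k$ are the curves $u=h(v;a,d)-\ru_k$. Lemma~\ref{l:width:vertical:strips:shifted:functions} then bounds from below the width of the images of the horizontal gap strips $\rho(\fstrip_j)$ inside each $U_i$.
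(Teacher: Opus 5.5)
Your proof is correct, but its main line differs from the paper's. The paper gives no separate argument for this lemma. It only says that ``the same argument'' applies, meaning one reruns Lemmas~\ref{l:preimages:vertical:strips}--\ref{l:width:preimages:Vi} with Lemma~\ref{l:images:horizontal:lines} in place of Lemma~\ref{l:preimages:vertical:lines}. That is exactly your fallback.

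Your primary route conjugates by the reflection instead. You combine $\phi^{-1}=\rho\circ\phi\circ\rho$ with the set identity $U_i=\rho(V_i)$ to get $\phi(U_j)\cap U_i=\rho\bigl(\phi^{-1}(V_j)\cap V_i\bigr)$. Since $\rho$ preserves widths and Moser Lipschitz constants, the conclusion follows with the same $\nu$. The set identity is all you need and it does hold, even though $\phi$ and $\rho$ send individual boundary curves of $V_i$ to different sides of $U_i$: for example, $\phi(\gamma_0^l)=\Gamma_0^l$ but $\rho(\gamma_0^l)=\Gamma_0^d$.

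Your route has three advantages:
\begin{itemize}
\item Nothing has to be re-estimated; the contraction constant and the strip property transfer verbatim.
\item The identity $\phi(U)\cap U_i=\rho\bigl(\phi^{-1}(\rho(U))\cap V_i\bigr)$ holds for any horizontal strip $U\subset\bigcup_i U_i$. So any contraction statement for vertical strips, including the general form Moser's hypothesis quantifies over, immediately gives its horizontal counterpart.
\item It explains why the paper's ``same argument'' works at all: the map is reversible under $\rho$.
\end{itemize}
The paper's route is self-contained but duplicative. Neither route is weaker, since both rest entirely on Lemma~\ref{l:width:preimages:Vi}.

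Your reading of the indices is the intended one: you compare the strip $\phi(U_j)\cap U_i$ with $\delta(U_j)$, despite the paper's inconsistent $U_{ij}$/$U_{ji}$ labels.
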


We are now ready to prove Theorem~\ref{t:main-uniqueness}.
\begin{proof}[Proof of Theorem~\ref{t:main-uniqueness}]
We apply Moser's Theorem~\ref{t:Moser} by applying Lemma~\ref{l:vertical:strips:onto:horizontal} to verify its assumptions (i) and (ii). The assumption (iii) is implied by Lemma~\ref{l:width:preimages:Vi} and the assumption (iv) by Lemma~\ref{l:width:images:Hi}.
\end{proof}

\begin{rmk}[Cantor set]
    We can go even further, the contraction of the strips by the map $\phi$ (Lemmas~\ref{l:width:preimages:Vi} and~\ref{l:width:images:Hi}) ensures the existence of the set
    $$
        \Lambda_+=\{(u,v): \phi^n(u,v)\in Q \text{ for all } n\in\N\},
    $$
    which is an uncountable set of limit vertical curves. Analogously
    $$
        \Lambda_- =\{(u,v): \phi^{-n}(u,v)\in Q \text{ for all } n\in\N_0\},
    $$
    represent an uncountable set of limit horizontal curves. The intersection of these two sets
    $$
        \Lambda = \Lambda_+ \cap \Lambda_-
    $$
    is then invariant with respect to the arbitrary number of iteration of the map $\phi$ and represent the Cantor set from Remark~\ref{r:Cantor_set}. To rephrase, Theorem~\ref{t:main-twosol} states that the restriction  $\phi_\Lambda:\Lambda\to\Lambda$ of $\phi$ defined by~\eqref{e:map:Keener} on $\Lambda$ is topologically conjugate with the shift map $\sigma$ on $\Sigma_3$. In other words, for every $(u_0,v_0)\in\Lambda$ there is a unique $s=\tau^{-1}(u_0,v_0)\in\Sigma_3$ such that
    \begin{equation}\label{e:Usi}
        (u_i,v_i) = \phi_\Lambda^i(u_0,v_0)=\tau(\sigma^i(s))\in U_{s_i}\quad \text{for all } i\in\Z.
    \end{equation}
    There is a natural symmetry between the sets of vertical and horizontal curves $\Lambda_+, \Lambda_-$, which follows from the symmetry of vertical and horizontal strips, Lemma~\ref{l:vertical:strips:onto:horizontal}. From its proof we have
    $$
        \phi^{-n}(u,v)=\rho(\phi^n(\rho(u,v))),
    $$
    which implies
    \begin{align*}
        \Lambda_- =& \{(u,v): \phi^{n}(\rho(u,v))\in \rho(Q) \text{ for all } n\in\N_0\}\\
        =&\{(u,v): \phi^{n}(v,u)\in Q \text{ for all } n\in\N_0\} = \Lambda_+^{-1}.
    \end{align*} 
\end{rmk}


\section{Exactly two stationary monotone fronts}\label{sec:monotonicity}
In order to prove the monotonicity of biinfinite sequences $u=(u_i)$, $i \in \mathbb{Z}$, which solve~\eqref{e:stationary} we locate sectors in the unit square $Q$ which arise as intersection of horizontal and vertical strips defined by~\eqref{e:curves:vetrical:strip:1}--\eqref{e:curves:horizontal:strips}. 
\begin{defn}[Sectors]
    Let $d>0$ be sufficiently small, $n\in\N$, and $w\in\alphabet^{2n}$ be an (anchored) word of length $2n$ starting at the coordinate $-n+1$
    \[
        w=\left(w_{-n+1}w_{-n+2}\ldots w_{-1}\underline{w_0}w_1 \ldots w_{n-1}w_n\right).
    \]
    We call the set
    \[
        \sector_w = \{(u,v)\in Q: \phi^j(u,v)\in U_{w_j}, j\in\{-n+1,\ldots, n\}\}
    \]
    a \emph{sector} in $Q$ of order $n$.
\end{defn}

\begin{figure}
    \centering
    \includegraphics[width=0.4\linewidth]{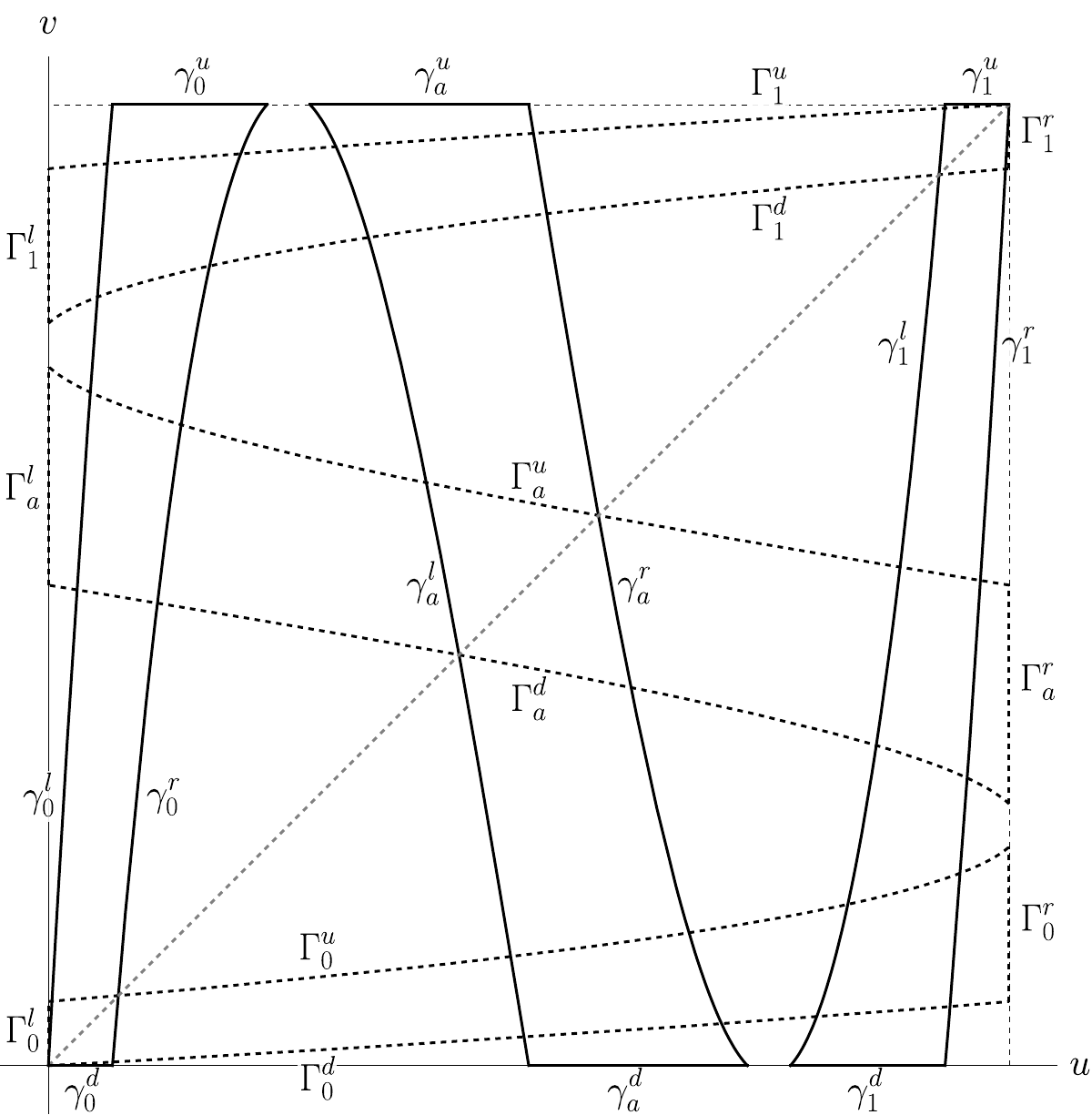}\
    \includegraphics[width=0.4\linewidth]{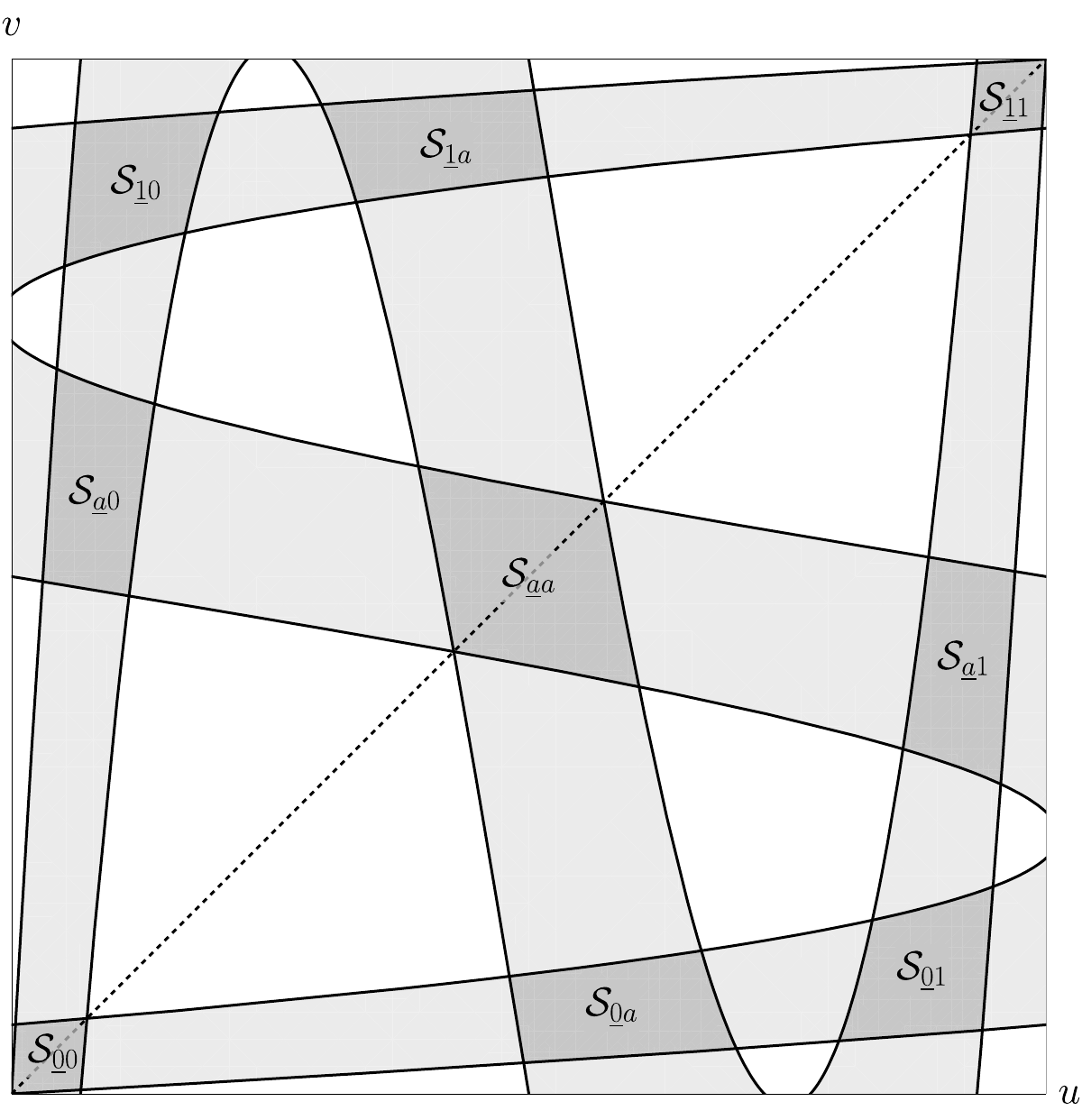}
    \caption{Illustration of Lemma~\ref{l:localization:1sector}. Left panel shows boundary curves  given \eqref{e:curves:vetrical:strip:1}--\eqref{e:curves:vetrical:strip:12} and define vertical strips $V_0, V_a, V_1$ (thick boundaries) and horizontal strips $U_0, U_a, U_1$ (dashed boundaries), see also Figure~\ref{fig:strips:Vi} The right panel depicts nine sectors of order 1 given by the intersections of these vertical and horizontal strips \eqref{e:sectors}.}
    \label{fig:1sectors}
\end{figure}

\begin{rmk}[Number of sectors]
    Let us recall that the sets
    \[
        \{(u,v)\in Q: \phi^j(u,v)\in U_{i} \},\quad i\in\alphabet, 
    \]
    form vertical strips if $j\in\N$ and horizontal strips if $j\in\Z\setminus\N$ in $Q$. Consequently, there are $3^{2n}$ sectors of order $n$, see Figure~\ref{fig:1sectors}. Sectors of  order $n$ arise as intersection of $3^n$ horizontal strips and $3^n$ vertical strips. For example, 9 sectors of order $1$ satisfy
    \begin{equation}\label{e:sectors}
        \sector_{\underline{i}j} = U_i\cap V_j,\quad i,j\in\alphabet,
    \end{equation}        
    where $U_i$ and $V_j$ are defined by \eqref{e:curves:vetrical:strip:1}--\eqref{e:curves:horizontal:strips}. Similarly 81 sectors of order 2 satisfy
    \[
        \sector_{i\underline{j}kl} = U_{ij}\cap V_{kl},\quad i,j,k,l\in\alphabet,
    \]
    where $U_{ij}$ and $V_{kl}$ are defined by Theorem~\ref{t:Moser}. We can generalize the definitions of vertical and horizontal strips to get arbitrary vertical strips recursively
    \[
        V_{j_1 j_2\ldots j_n} = \phi^{-1}(V_{j_2\ldots j_{n-1}}) \cap V_{j_1},\quad j_i\in\alphabet,
    \]
    and horizontal strips
    \[
        U_{j_n \ldots j_2 j_1} = \phi(U_{j_{n}\ldots j_2}) \cap U_{j_1},\quad j_i\in\alphabet.
    \]
    Sectors $\sector_w$ from~\eqref{e:sectors} can then be written as (see the schematic illustration in Figure~\ref{fig:symbolic-caricature})
    \[
        \sector_w = U_{w_{-n+1}\ldots w_{0}}\cap V_{w_{1}\ldots w_{n}}.
    \]
\end{rmk}

\begin{lem}[A priori estimate]\label{l:a_priori}
    Let $d>0$ be sufficiently small, $s\in\Sigma_3$, and $u=(\ldots, u_{-1},u_0,u_1,\ldots)$ be a solution of \eqref{e:stationary} that satisfies \eqref{e:connection} and $\tau(\sigma^i(s))=(u_i,u_{i-1})$ for all $i\in\mathbb{Z}$, where $\tau:\Sigma_3\to\Lambda$ is a homeomorphism from Theorem~\ref{t:main-uniqueness}. Then for all $i\in\Z$ and all $n\in\N$ we have
    $$
        (u_i,u_{i-1})\in\sector_{s_{i-n+1}\ldots s_{i-1}\underline{s_i},s_{i+1}\ldots s_{i+n}}.
    $$    
\end{lem}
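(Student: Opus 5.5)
The plan is to unwind the definition of the sector $\sector_w$ and combine it with the conjugacy relation \eqref{e:Usi}. Fix $i\in\Z$ and $n\in\N$, and put $(x,y):=(u_i,u_{i-1})=\tau(\sigma^i(s))$. Since $u$ solves \eqref{e:stationary}, the map \eqref{e:phi} gives $\phi(u_k,u_{k-1})=(h(u_k;a,d)-u_{k-1},u_k)=(u_{k+1},u_k)$ for every $k$. By induction in both directions, using the explicit inverse \eqref{e:phiInv}, we then get $\phi^j(x,y)=(u_{i+j},u_{i+j-1})$ for all $j\in\Z$. On the other hand, the conjugacy $\phi\circ\tau=\tau\circ\sigma$ from Theorem~\ref{t:main-uniqueness}, iterated and combined with $\phi^{-1}\circ\tau=\tau\circ\sigma^{-1}$ (which holds because both $\phi$ and $\sigma$ are bijections on $\Lambda$ and $\Sigma_3$), gives $\phi^j(\tau(\sigma^i(s)))=\tau(\sigma^{i+j}(s))$. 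This is consistent with the hypothesis $\tau(\sigma^{i+j}(s))=(u_{i+j},u_{i+j-1})$.

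The key step is to identify which strip $\tau(t)$ lies in for a given $t\in\Sigma_3$. In Moser's construction, $\tau(t)$ is the unique intersection point of the nested horizontal strips $U_{t_{-m}\ldots t_0}$ and the nested vertical strips $V_{t_1\ldots t_m}$, and $\tau(t)\in U_{t_0}\cap V_{t_1}=\sector_{\underline{t_0}t_1}$. This is exactly the content of \eqref{e:Usi}: $\tau(\sigma^k(s))\in U_{s_k}$ for all $k$. Applying it with $t=\sigma^{i+j}(s)$, whose $0$-th symbol is $s_{i+j}$, gives $\phi^j(x,y)=\tau(\sigma^{i+j}(s))\in U_{s_{i+j}}$ for every $j\in\{-n+1,\ldots,n\}$. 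By the definition of the sector with the anchored word $w_j=s_{i+j}$, $j=-n+1,\ldots,n$, this is precisely the statement $(x,y)\in\sector_{s_{i-n+1}\ldots\underline{s_i}\ldots s_{i+n}}$.

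I expect the main obstacle to be justifying \eqref{e:Usi}, namely that $\tau(t)\in U_{t_0}$ with the indexing convention matching the definition of $\sector_w$, rather than being off by one with the vertical strip $V_{t_0}$. Since $\phi(V_j)=U_j$, the statement $\tau(t)\in U_{t_0}$ is equivalent to $\phi^{-1}(\tau(t))=\tau(\sigma^{-1}t)\in V_{t_0}$. So I would fix Moser's convention, in which $\tau(t)\in V_{t_0}$ for the point whose forward orbit visits $V_{t_0},V_{t_1},\ldots$, and then check that the shift in the paper's convention makes $\tau(t)\in U_{t_0}$. If the conventions turn out to disagree by one index, I would re-index $s$ so that the statement holds. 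This is harmless here because $s$ is only determined through the relation $\tau(\sigma^i(s))=(u_i,u_{i-1})$.

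Finally, the hypothesis \eqref{e:connection} plays no role beyond guaranteeing that $u\in[0,1]^\Z$, so that each $(u_i,u_{i-1})$ lies in $Q$, where the sectors are defined.
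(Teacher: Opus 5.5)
Your proposal is correct and is essentially the paper's argument: both rest on \eqref{e:Usi} applied to every iterate $\phi^j(u_i,u_{i-1})=\tau(\sigma^{i+j}(s))$. The paper assembles the nested strips $U_{s_{i-n+1}\ldots s_i}$ and $V_{s_{i+1}\ldots s_{i+n}}$ step by step, while you read the conclusion directly off the definition of $\sector_w$, which is slightly cleaner. One correction to your contingency plan: once $\tau$ is fixed, $s$ is determined by $u$, so an index mismatch would be fixed by normalizing $\tau$ rather than re-indexing $s$ (e.g.\ replacing $\tau$ by $\tau\circ\sigma^{\pm1}$, which is still a conjugating homeomorphism) so that \eqref{e:Usi} holds, and this is exactly the normalization the paper relies on.
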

\begin{proof}
    We have from~\eqref{e:Usi} that $\tau(\sigma^i(s))=(u_i,u_{i-1})\in U_{s_i}$. Therefore,
    \begin{align*}
        (u_0,u_{-1}) &\in U_{s_0},\\
        (u_0,u_{-1}) &\in U_{s_{-1}s_0} = \phi(U_{s_{-1}})\cap U_{s_0}, \text{ since } (u_{-1},u_{-2})\in U_{s_{-1}},\\
        \vdots \\
        (u_0,u_{-1}) &\in U_{s_{-n+1}\ldots s_{-1}s_0} = \phi(U_{s_{-n+1}\ldots s_{-1}})\cap U_{s_0}, \text{ since } (u_{-1},u_{-2})\in U_{s_{-n+1}\ldots s_{-1}}.
    \end{align*}
    Analogously,
    \begin{align*}
        (u_0,u_{-1})&\in V_{s_1},\text{ since } (u_1,u_{0})\in U_{s_1}, \text{ and } \phi(U_{s_1})=V_{s_1},\\
        (u_0,u_{-1}) &\in V_{s_{1}s_2} = \phi(V_{s_{2}})\cap V_{s_1}, \text{ since } (u_{1},u_{0})\in V_{s_{2}},\\
        \vdots \\
        (u_0,u_{-1}) &\in V_{s_{1}\ldots s_n} = \phi(V_{s_2\ldots s_{n}})\cap V_{s_1}, \text{ since } (u_{1},u_{0})\in V_{s_2\ldots s_{n}}.
        \qedhere
    \end{align*}
\end{proof}


First, we localize 6 sectors of order $1$, see Figure~\ref{fig:1sectors}.

\begin{lem}[Properties of sectors of order $1$]\label{l:localization:1sector}
    Let $d>0$ be sufficiently small. If $(u,v)\in\sector_{\underline{a}0}\cup\sector_{\underline{1}0}\cup\sector_{\underline{1}a}$ then $v>u$. Similarly, if $(u,v)\in\sector_{\underline{0}a}\cup\sector_{\underline{0}1}\cup\sector_{\underline{a}1}$ then $u>v$.    
\end{lem}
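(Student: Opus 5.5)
The plan is to reduce the claim to a comparison of coordinate ranges, using the ordering of the roots $\ru_i$ from Lemma~\ref{l:roots}. By \eqref{e:sectors}, a sector of order $1$ is $\sector_{\underline{i}j}=U_i\cap V_j$. So for $(u,v)\in\sector_{\underline{i}j}$ the first coordinate $u$ is controlled by the vertical strip $V_j$, and the second coordinate $v$ by the horizontal strip $U_i$.

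First, I locate the vertical strips in the $u$-direction. The boundary curves \eqref{e:curves:vetrical:strip:1}--\eqref{e:curves:vetrical:strip:12}, already used in the proof of Lemma~\ref{l:preimages:vertical:strips}, give
\[
V_0\subset [0,\ru_2]\times[0,1],\quad V_a\subset[\ru_3,\ru_5]\times[0,1],\quad V_1\subset[\ru_6,1]\times[0,1].
\]
Second, I transfer this to the horizontal strips. The curves $\Gamma_i^j$ in \eqref{e:curves:horizontal:strips} are the inverse curves of $\gamma_i^j$, that is, their images under the reflection $\rho(u,v)=(v,u)$ from the proof of Lemma~\ref{l:vertical:strips:onto:horizontal}. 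Hence $U_i=\rho(V_i)$, and therefore
\[
U_0\subset[0,1]\times[0,\ru_2],\quad U_a\subset[0,1]\times[\ru_3,\ru_5],\quad U_1\subset[0,1]\times[\ru_6,1].
\]

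Now I check the three sectors of the first claim directly, using the strict ordering $\ru_2<\ru_3$ and $\ru_5<\ru_6$ from Lemma~\ref{l:roots}:
\begin{itemize}
\item $(u,v)\in\sector_{\underline{a}0}$ gives $u\le \ru_2<\ru_3\le v$;
\item $(u,v)\in\sector_{\underline{1}0}$ gives $u\le\ru_2<\ru_6\le v$;
\item $(u,v)\in\sector_{\underline{1}a}$ gives $u\le\ru_5<\ru_6\le v$.
\end{itemize}
In each case $v>u$. The second claim follows by applying $\rho$: it maps $\sector_{\underline{i}j}=U_i\cap V_j$ onto $V_i\cap U_j=\sector_{\underline{j}i}$ and swaps the coordinates. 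Alternatively, one can repeat the same range comparison.

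The only point needing care is the identification $U_i=\rho(V_i)$, together with the strictness of the inequalities between the roots. Strictness holds for $d<d^*(a)$ by Lemma~\ref{l:roots}, so the argument contains no real obstacle.
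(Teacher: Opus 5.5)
Your proof is correct, and it is more direct than the paper's. The paper argues with boundary curves. It notes that the lower-left and upper-right corners of the diagonal sectors $\sector_{\underline{0}0}$ and $\sector_{\underline{a}a}$ lie on the line $u=v$, because $\Gamma_i^j$ is the inverse curve of $\gamma_i^j$. It then uses that $\Gamma_a^d,\Gamma_a^u,\Gamma_1^d,\Gamma_1^u$ lie above $\Gamma_0^u$, so the off-diagonal sectors sit above the diagonal.

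You instead project onto the coordinate axes. With $I_0=[0,\ru_2]$, $I_a=[\ru_3,\ru_5]$, $I_1=[\ru_6,1]$ you get $\sector_{\underline{i}j}=U_i\cap V_j\subset I_j\times I_i$. The strict ordering of these intervals from Lemma~\ref{l:roots} then settles all six off-diagonal cases at once, including strictness of the inequality. The identification $U_i=\rho(V_i)$ you rely on is exactly what the inverse-curve definition \eqref{e:curves:horizontal:strips} encodes, and it makes the second claim immediate. This is shorter and more transparent than the corner-and-curve argument.

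What the paper's curve-based picture buys is reusability in Lemmas~\ref{l:localization:2sector} and~\ref{l:localization:nsector}. There the relevant sectors (e.g.\ $\sector_{0\underline{a}aa}$, $\sector_{0\underline{a}a1}$, or the order-$n$ sectors near $(0,0)$ and $(1,1)$) lie inside a single diagonal box $I_a\times I_a$, $I_0\times I_0$ or $I_1\times I_1$. Such a box straddles the line $u=v$, so coarse interval comparison no longer fixes the sign of $u-v$. One then needs the finer position of the shifted curves $\rv_i$ and their inverses.
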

\begin{proof}
    Let us prove the former statement, the latter is proven accordingly. We observe that the lower-left and upper-right corners of the sector $\sector_{\underline{0}0}$ lie on the line $u=v$ since the curve $\gamma_0^l$ is the inverse of $\Gamma_0^d$, and the curve $\gamma_0^r$ is the inverse of $\Gamma_0^u$, see \eqref{e:curves:horizontal:strips}. It is now enough to observe that the curves $\Gamma_a^d$, $\Gamma_a^u$, $\Gamma_1^d$, $\Gamma_1^u$ lie above $\Gamma_0^u$, see \eqref{e:curves:horizontal:strips}, to observe that sectors $\sector_{\underline{a}0}$, $\sector_{\underline{1}0}$ lie above the the line $u=v$.

    In the same spirit, we observe that the lower-left and upper-right corners of the sector $\sector_{\underline{a}a}$ lie on the line $u=v$ since the curve $\gamma_a^l$ is the inverse of $\Gamma_a^d$, and the curve $\gamma_a^r$ is the inverse of $\Gamma_a^u$. Consequently, also the sector $\sector_{\underline{1}a}$ lies above the line $u=v$.
\end{proof}

\begin{figure}
    \centering
    \includegraphics[width=0.45\linewidth]{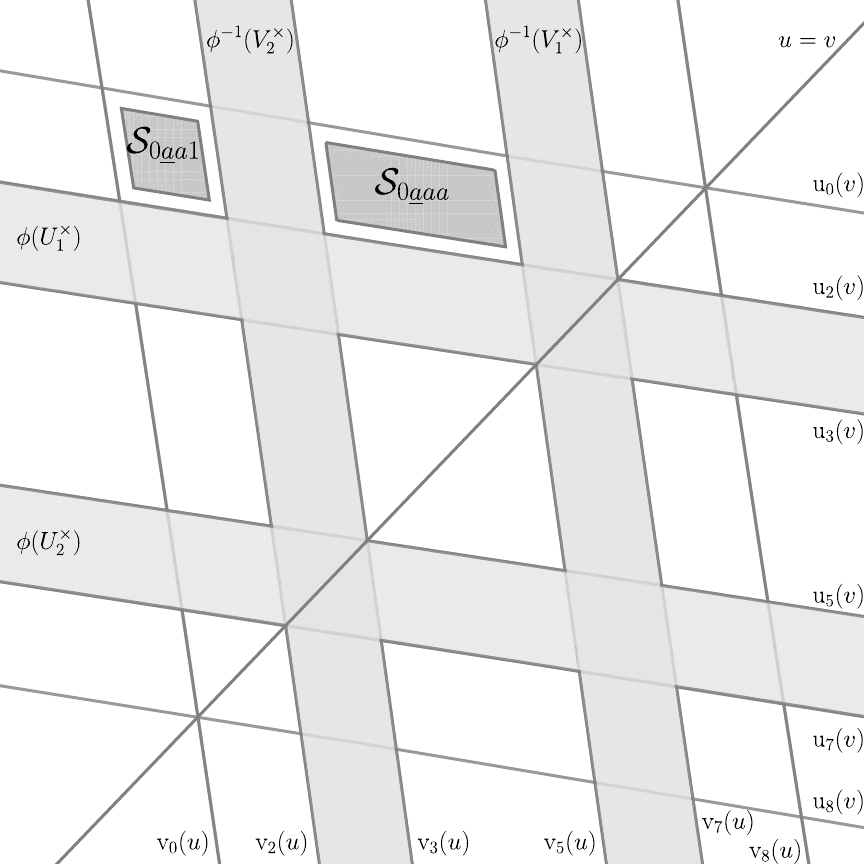}\ \includegraphics[width=0.45\linewidth]{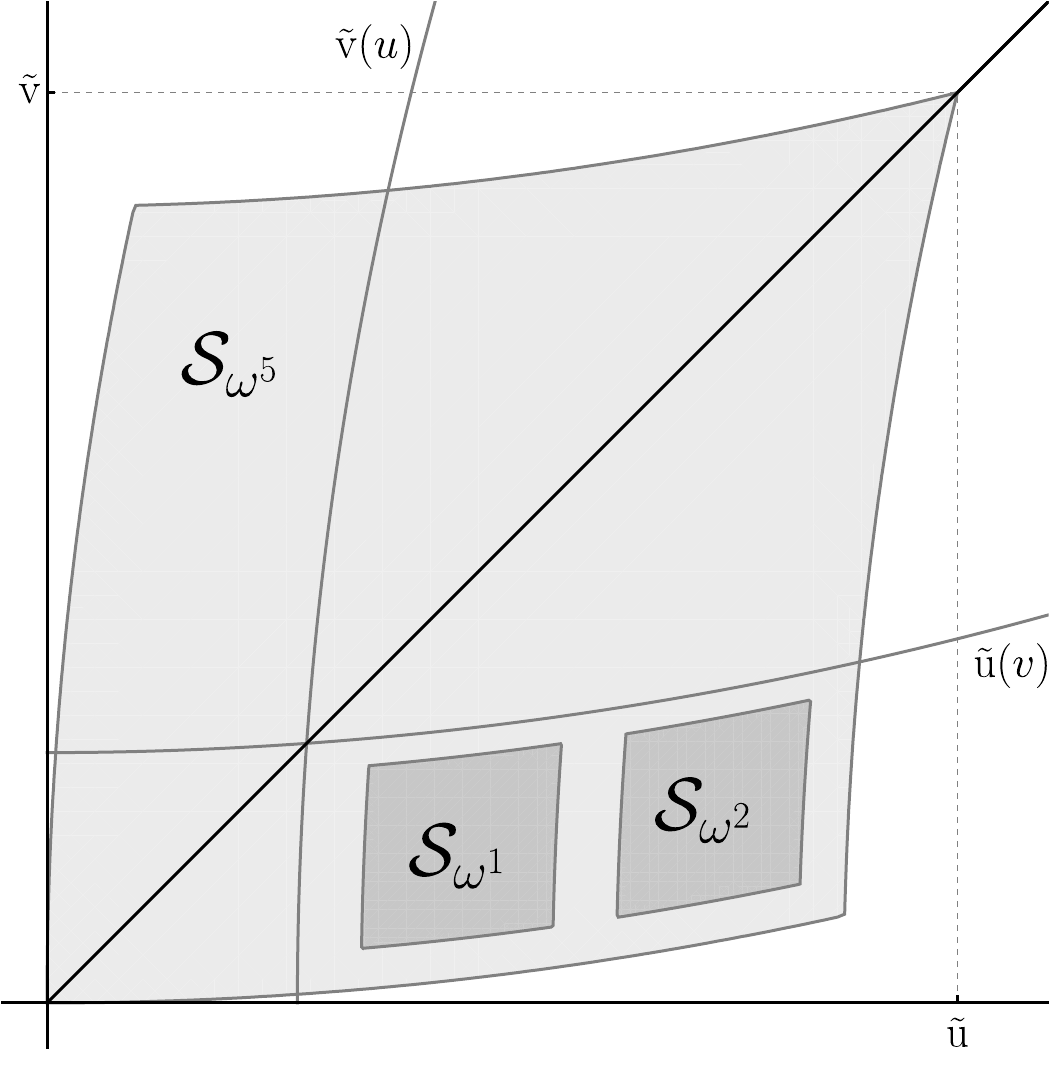}
    \caption{Illustration of Lemma~\ref{l:localization:2sector} (left panel) and Lemma~\ref{l:localization:nsector} (right panel). The left panel is a zoomed-in detail within the sector $\sector_{\underline{a}a}$, see Figure~\ref{fig:1sectors}. Similarly, the right panel highlights the situation in the neighborhood of origin, the sector $\sector_{\omega^5}$, $w^5=0\ldots\underline{0}\ldots 0$, is an arbitrary sector of order $n-1$, $n\in\Z$.}
    \label{fig:nsectors}
\end{figure}

Next, we localize 2 sectors of order $2$, see the left panel of Figure~\ref{fig:nsectors}.

\begin{lem}[Properties of sectors of order $2$]\label{l:localization:2sector}
    Let $d>0$ be sufficiently small. If $(u,v)\in\sector_{0\underline{a}a1}\cup\sector_{0\underline{a}aa}$  then $v>u$. 
\end{lem}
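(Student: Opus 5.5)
The plan is to work inside the order-1 sector $\sector_{\underline{a}a}=U_a\cap V_a$ and exploit the reflection symmetry $\rho(u,v)=(v,u)$ from the proof of Lemma~\ref{l:vertical:strips:onto:horizontal}. By the remark on the number of sectors we have
\[
\sector_{0\underline{a}a1}=U_{0a}\cap V_{a1},\qquad \sector_{0\underline{a}aa}=U_{0a}\cap V_{aa}.
\]
So $\sector_{\underline{a}a}$ is cut into a $3\times 3$ grid by the three vertical sub-strips $V_{a0},V_{aa},V_{a1}\subset V_a$ and the three horizontal sub-strips $U_{0a},U_{aa},U_{1a}\subset U_a$.

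The first step is to fix the geometric order of these sub-strips. By Lemma~\ref{l:preimages:vertical:lines}, the preimage of a vertical line $u=\bar u$ is the graph $v=h(u;a,d)-\bar u$. On $[\ru_3,\ru_5]$, Lemma~\ref{l:roots} gives $h'<-1$, so for fixed $v$ the next coordinate $h(u)-v$ decreases in $u$. Hence inside $V_a$ the sub-strip $V_{a1}$ lies leftmost, $V_{aa}$ lies in the middle and $V_{a0}$ lies rightmost. Applying $\rho$, which maps $V_{aj}$ onto $U_{ja}$, the sub-strip $U_{0a}$ is the top one, $U_{aa}$ the middle one and $U_{1a}$ the bottom one. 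Therefore $\sector_{0\underline{a}a1}$ is the top-left cell and $\sector_{0\underline{a}aa}$ is the top-middle cell of the grid. It then suffices to show that the union of these two cells, namely $U_{0a}\cap(V_{a1}\cup V_{aa})$, lies strictly above the diagonal.

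The key step is as follows. Let $v=\psi(u)$ be the lower boundary of $U_{0a}$; it is a Moser horizontal curve with constant $\mu<1$. Let $u=\chi(v)$ be the right boundary of $V_{aa}$; it is a Moser vertical curve with the same constant. By construction $\chi$ is the $\rho$-image of $\psi$, i.e., $\chi=\psi$ as functions. A horizontal and a vertical Moser curve meet in a unique point, and $\rho$ maps the intersection to itself. Hence the intersection is a diagonal point $(p,p)$, so that $\psi(p)=\chi(p)=p$.

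Now take $(u,v)$ in the union, so that $v\ge\psi(u)$ and $u\le\chi(v)$, and suppose $v\le u$. We split into three cases.
\begin{enumerate}[label=(\alph*)]
\item If $u\le p$, then $v\ge\psi(u)\ge p-\mu(p-u)\ge u$, with strict inequality unless $u=p$.
\item If $u>p$ and $v\ge p$, then $u-p\le\chi(v)-p\le\mu(v-p)<v-p$, which contradicts $v\le u$.
\item If $u>p$ and $v<p$, then $p-v\le\mu(u-p)$ and $u-p\le\mu(p-v)$. Together these give $u-p\le\mu^2(u-p)$, which is impossible.
\end{enumerate}
The only remaining configuration is $u=v=p$.

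The main obstacle is strictness at this corner point $(p,p)$. The point $(p,p)$ lies on the common boundary of $U_{0a}$ and $U_{aa}$, so it must be excluded. To do so, I would use that the sub-strips of $\Lambda$ are closed and pairwise disjoint, so $\sector_{0\underline{a}aa}$ only touches the boundary curve of $U_{0a}$ through points whose forward/backward codes differ. Alternatively, I would use that $(p,p)$ lies on a boundary curve and therefore belongs to no interior sector of that order. A second point to check carefully is the identification $\chi=\psi$ under $\rho$, which follows from Lemma~\ref{l:vertical:strips:onto:horizontal} applied iteratively, since $\rho$ conjugates $\phi$ to $\phi^{-1}$.
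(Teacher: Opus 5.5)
Your strategy is the right one: bound the two cells from below by a Moser horizontal curve and from the right by its $\rho$-reflection, then use $\mu<1$ to force $v\ge u$. Your case analysis (a)--(c) is also correct.

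\textbf{The failing step.} The identification $\chi=\psi$ is false. Since $U_{0a}=\rho(V_{a0})$, the reflection $\rho$ maps the lower boundary $v=\psi(u)$ of $U_{0a}$ onto the \emph{left boundary of $V_{a0}$}, not onto the right boundary of $V_{aa}$. These are different curves. $V_{aa}$ and $V_{a0}$ are disjoint closed vertical strips, and between them lies the strip $\phi^{-1}(\fstripL)\cap V_a$ bounded by the curves $\rv_3$ and $\rv_2$, which has positive width (Lemma~\ref{l:width:fstrips}). For the same reason $U_{0a}$ and $U_{aa}$ have no common boundary, contrary to your last paragraph. So for the curves you actually named, nothing puts their intersection on the diagonal. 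Your treatment of the ``corner'' rests on a false picture of the sub-strips as tiling $V_a$ and $U_a$. The remedies you suggest could not work in any case. Sectors are closed, so the point where the lower boundary of $U_{0a}$ meets the right boundary of $V_{aa}$ belongs to $\sector_{0\underline{a}aa}$. If that point lay on the diagonal, the lemma would simply be false. Strictness cannot come from a boundary argument; it has to come from the corner being off the diagonal.

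\textbf{The fix.} The missing ingredient is exactly the gap between $V_{aa}$ and $V_{a0}$.
\begin{enumerate}
\item Take the vertical comparison curve to be $u=\psi(v)$, i.e., the left boundary of $V_{a0}$. It meets $v=\psi(u)$ at a unique, $\rho$-invariant point $(p,p)$.
\item Your (a)--(c) then show $\{v\ge\psi(u)\}\cap\{u\le\psi(v)\}\subset\{v\ge u\}$, with equality only at $(p,p)$.
\item $V_{a1}\cup V_{aa}$ lies to the left of $V_{a0}$ and is disjoint from it. Hence every point of $U_{0a}\cap(V_{a1}\cup V_{aa})$ satisfies $u<\psi(v)$ strictly, which excludes $(p,p)$ and gives $v>u$.
\end{enumerate}
The corrected argument is essentially the paper's. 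The paper uses the explicit pair $\rv_2$ and its reflection $\ru_2$, which meet on the diagonal. It then observes that the sectors lie above $\ru_2$ and to the left of $\rv_3$ (resp.\ $\rv_6$), hence strictly to the left of $\rv_2$, which supplies strictness. Your contraction estimate adds the detail the paper leaves implicit: why ``above a curve and left of its reflection'' forces $v\ge u$.
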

\begin{proof}
    Let the functions $\rv_i(u)$, $i=0,1,\ldots,8$, be defined by \eqref{e:vi_curves}. Their restrictions $\rv_i(u)|_{[\ru_3,\ru_5]}$ are decreasing on $[\ru_3,\ru_5]$ and have decreasing inverses $\ru_i(v)$. The curves $\ru_2(v)$ and $\rv_2(u)$ intersect on the line $u=v$, so do the curves $\ru_3(v)$ and $\rv_3(u)$. The statement follows ideas from the proof of Lemma~\ref{l:preimages:vertical:strips} and the fact that $\sector_{0\underline{a}aa}$ lies above $\ru_2(v)$ and left from $\rv_3(u)$. Similarly, the sector $\sector_{0\underline{a}a1}$ lies above $\ru_2(v)$ and left from $\rv_6(u)$, see Figure~\ref{fig:nsectors}.
\end{proof}

Finally, we localize sectors of order $n$, see the right panel of Figure~\ref{fig:nsectors}.

\begin{lem}[Properties of sectors of order $n$]\label{l:localization:nsector}
    Let $d>0$ be sufficiently small and 
    $$
        w^1=\underbrace{0\ldots\underline{0}\ldots0}_{(2n-1)\rm{-times}}\!a,\quad w^2=\underbrace{0\ldots\underline{0}\ldots0}_{(2n-1)\rm{-times}}\!1,\quad  w^3=0\!\underbrace{1\ldots\underline{1}\ldots1}_{(2n-1)\rm{-times}}, \text{ and } w^4=a\!\underbrace{1\ldots\underline{1}\ldots1}_{(2n-1)\rm{-times}}.
    $$
    Then for all $(u,v)\in\sector_{w^i}$, $i=1,2,3,4$, the inequality $u>v$ holds.
\end{lem}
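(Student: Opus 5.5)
The plan is to argue directly on the orbit segment encoded by the sector, via a discrete convexity argument. I treat $w^1$ and $w^2$ in detail. The cases $w^3,w^4$ follow by the same argument applied to the sequence reflected through $u\mapsto 1-u$: on $(a_2,1]$ we have $g>0$, so second differences there are negative (concavity replaces convexity), and the exit letter sits on the left.

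Let $(u,v)=(u_0,u_{-1})\in\sector_{w^1}$ and set $(u_j,u_{j-1})=\phi^j(u_0,u_{-1})$ for $j=-n+1,\ldots,n$. By the definition of sectors, $\phi^j(u_0,u_{-1})\in U_0$ for $j=-n+1,\ldots,n-1$, and $\phi^n(u_0,u_{-1})\in U_a$. Since $U_i=\phi(V_i)$, the second coordinate of a point of $U_i$ is the first coordinate of a point of $V_i$. Lemma~\ref{l:roots} then gives $u_{j}\in[0,\ru_2]$ for $j=-n,\ldots,n-2$ and $u_{n-1}\in[\ru_3,\ru_5]$. Lemma~\ref{l:roots} also gives $h(a_1)>2$, hence $\ru_2<a_1$. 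Along the orbit we have
\[
u_{k+1}-2u_k+u_{k-1}=-\tfrac1d g(u_k),
\]
and $g<0$ on $(0,a_1)$. Hence the finite sequence $u_{-n},\ldots,u_{n-1}$ is convex: its differences $w_k:=u_k-u_{k-1}$ are nondecreasing, and strictly increasing whenever $u_k>0$. The last difference is positive because $u_{n-1}\ge \ru_3>\ru_2\ge u_{n-2}$; this reproduces Lemma~\ref{l:localization:1sector} for the sector $\sector_{\underline{0}a}$. The goal is to push this positivity back to $w_0=u_0-u_{-1}$.

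I will proceed by induction on $n$, with $n=1$ being exactly Lemma~\ref{l:localization:1sector}. The point $\phi(u_0,u_{-1})$ lies in the order-$(n-1)$ sector with the same pattern, so the induction hypothesis gives $w_1>0$. What remains is to exclude $w_0\le0$, i.e. to exclude that the minimum of the convex sequence sits at an index $\ge0$.

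For this I use the quantitative hyperbolicity near the origin. Running the recursion backwards gives
\[
w_{k}=w_{k+1}+\tfrac1d g(u_k),
\]
and by \ref{hyp:g:shape} we have $g(u)\le -c\,u$ on $[0,\ru_2]$ for some $c>0$. So if $w_0\le0$, the backward differences become very negative: $u_{-k}\ge u_{-1}(1+c/d)^{k-1}$. Symmetrically, the forward iterates must climb from $u_0$ to $u_{n-1}\ge\ru_3$ in $n-1$ steps, with a growth factor bounded by $\max|h'|\sim C/d$. These two estimates should be combined with the reflection symmetry $\phi^{-1}=\rho\circ\phi\circ\rho$ from Lemma~\ref{l:vertical:strips:onto:horizontal}. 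Under $\rho$, the segment $u_{-n},\ldots,u_0$ becomes a forward orbit in $V_0$, and $\rho$ fixes the diagonal. Together these should force the point to lie on the unstable side of the diagonal, the same way the corners of $\sector_{\underline{0}0}$ lie on $u=v$ in the proof of Lemma~\ref{l:localization:1sector}.

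Geometrically, the hoped-for conclusion is that $\sector_{w^1}\subset V_{0\ldots0a}$ lies to the right of the curve $\rho(\phi^{-(n-1)}(\gamma^v_{\ru_3}))$. This curve meets the $\rho$-symmetric boundary of $U_{0\ldots0}$ on the diagonal, which gives $u>v$.

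For $w^2$ the argument is identical, with $u_{n-1}\ge \ru_6$ in place of $u_{n-1}\ge\ru_3$.

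The main obstacle is exactly this exclusion of $w_0\le0$. Convexity alone does not prevent the orbit from first descending toward the origin along the stable direction and then ascending. The decisive input must therefore be the horizontal-strip constraints on $u_{-n},\ldots,u_{-1}$ coming from the left half of the word, played against the forward exit condition via the $\rho$-symmetry. I would first try the symmetric comparison: if $w_0\le0$, reflect the segment and compare it with the forward segment. Since both obey the same strictly convex recursion with expansion factor of order $1/d$, the backward escape should reach $\ru_2$ no later than the forward escape reaches $\ru_3$, contradicting $u_{-n}\le\ru_2$. Making this comparison rigorous for small $d$ is where I expect the real work.
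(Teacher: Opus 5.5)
Your reduction of sector membership to $u_k\in[0,\ru_2]$ for $-n\le k\le n-2$ and $u_{n-1}\in[\ru_3,\ru_5]$ is correct. However, the proposal has a genuine gap: the only nontrivial step, excluding $u_0\le u_{-1}$, is never carried out, and none of the ingredients you list can do it.

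\emph{The induction on $n$.} It gives $u_1>u_0$, but convexity only says $u_0-u_{-1}\le u_1-u_0$. That moves positivity forward, not backward, so the induction contributes nothing.

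\emph{The rate estimates.} They cannot be uniform in $n$. Assuming $u_0\le u_{-1}$, they yield $u_0\le\ru_2(1+c/d)^{-(n-1)}$ and $u_0\ge\ru_3(2+C/d)^{-(n-1)}$. Since $C\ge c$ gives $2+C/d>1+c/d$, these two bounds are compatible as soon as $\bigl((2+C/d)/(1+c/d)\bigr)^{n-1}\ge\ru_3/\ru_2$, i.e.\ for all large $n$ at fixed $d$.

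\emph{The geometric claim.} Saying that $\sector_{w^1}$ lies to the right of $\rho(\phi^{-(n-1)}(\gamma^v_{\ru_3}))$ restates what has to be proved, and you give no argument for it. It is the same kind of containment the paper's proof asserts, with the curve $h(u)-\tilde{\ru}$ and its reflection meeting on the diagonal.

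What is missing is your last-paragraph comparison, done by monotonicity rather than by growth rates. Set $f_k=u_k$ and $b_k=u_{-1-k}$ for $k\ge-1$, so that $f_{-1}=b_0$ and $b_{-1}=f_0$; both satisfy $x_{k+1}=h(x_k)-x_{k-1}$ for $k\ge0$. Let $e_k=b_k-f_k$, so $e_{-1}=-e_0$. On $[0,\ru_2]\subset[0,a_1)$ one has $h'=2-g'/d>2$. Hence if $f_k,b_k\in[0,\ru_2]$ and $e_k\ge0$, then $e_{k+1}=h(b_k)-h(f_k)-e_{k-1}\ge 2e_k-e_{k-1}$.

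Now suppose $e_0=u_{-1}-u_0\ge0$. Then $e_0-e_{-1}=2e_0\ge0$. Since $u_k,u_{-1-k}\in[0,\ru_2]$ for $0\le k\le n-2$, induction gives $0\le e_0\le e_1\le\dots\le e_{n-1}=u_{-n}-u_{n-1}$. Thus $u_{n-1}\le u_{-n}\le\ru_2<\ru_3$, contradicting the exit letter ($u_{n-1}\ge\ru_3$ for $w^1$, $u_{n-1}\ge\ru_6$ for $w^2$). This is uniform in $n$ and needs neither induction on $n$ nor hyperbolicity estimates.

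For $w^3,w^4$, run the same argument on $z_k=1-u_{-1-k}$; you need the value reflection \emph{and} the index reversal. This sequence solves the recursion with $\tilde g(z)=-g(1-z)$, whose $\tilde h$ has slope $>2$ on $[0,1-\ru_6]$. The bound $z_k\le 1-\ru_6$ holds for $-n\le k\le n-2$. The exit value $z_{n-1}=1-u_{-n}\ge1-\ru_5>1-\ru_6$ then gives the contradiction.
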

\begin{proof}
    Let us consider the constant word $w^5=0\ldots\underline{0}\ldots 0$ of length $2(n-1)$. The sector $\sector_{w^5}$ lies in the square $[0,\tilde{\ru}]^2$. The function $\tilde{\rv}(u)=h(u;a,d)-\tilde{\ru}$ and its inverse $\tilde{\ru}(v)$ is increasing on $[0,\tilde{\ru}]$. The statement for $w^1$ and $w^2$ follows from the fact $\sector_{w^1}$ and $\sector_{w^2}$ lie to the right from $\tilde{\rv}(u)$ and below $\tilde{\ru}(v)$, see Figure~\ref{fig:nsectors}. The proof for $w^3$ and $w^4$ is similar, we only study the parts of the sector $\sector_{w^6}$ where $w^6=1\ldots\underline{1}\ldots 1$ is a constant word of length $2(n-1)$.
\end{proof}

We introduce a natural ordering $\lhd$ on the alphabet $\alphabet$ by $0\lhd a \lhd 1$. A biinfinite symbolic sequence $s\in\Sigma_3$ is nondecreasing if $s_i\unlhd s_{i+1}$ for all $i\in\Z$. Orderings $\rhd$ and $\unrhd$ and nonincreasing symbolic sequences are defined accordingly.

The fact that the corresponding symbolic sequence $s\in\Sigma_3$ is nondecreasing is necessary for the existence of increasing fronts of \eqref{e:stationary}.
\begin{lem}[Increasing symbolic sequences.]\label{l:increasing:symbols}
    If $u$ is a solution of \eqref{e:stationary}  satisfying \eqref{e:connection} then the unique symbolic sequence $s\in\Sigma_3$ with $\tau(\sigma^i(s))=(u_i,u_{i-1})$ for all $i\in\Z$ is nondecreasing.
\end{lem}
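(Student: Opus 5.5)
We argue by contraposition, using the order-$1$ localization of Lemma~\ref{l:localization:1sector}. Let $u$ solve \eqref{e:stationary} and satisfy \eqref{e:connection}, and let $s\in\Sigma_3$ be the sequence with $\tau(\sigma^i(s))=(u_i,u_{i-1})$. Suppose $s$ is not nondecreasing. Then there is an index $i\in\Z$ with $s_i\rhd s_{i+1}$, and the pair $(s_i,s_{i+1})$ is one of $(a,0)$, $(1,0)$ or $(1,a)$. We aim to show that this forces $u_{i-1}>u_i$, which contradicts $u_{i-1}\le u_i$ from \eqref{e:connection}.

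\textbf{Placing the point in a sector.} By \eqref{e:Usi} we have $(u_i,u_{i-1})\in U_{s_i}$ and $(u_{i+1},u_i)\in U_{s_{i+1}}$. From $\phi(u_i,u_{i-1})=(u_{i+1},u_i)$ and the fact that $\phi$ maps $V_j$ bijectively onto $U_j$ (Lemma~\ref{l:vertical:strips:onto:horizontal}), it follows that $(u_i,u_{i-1})\in \phi^{-1}(U_{s_{i+1}})$. This is the step that needs care: we must have $\phi^{-1}(U_{s_{i+1}})\cap Q\subset V_{s_{i+1}}$, rather than only $\phi^{-1}(U_{s_{i+1}})\supset V_{s_{i+1}}$. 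I would obtain this as the $n=1$ case of Lemma~\ref{l:a_priori}, i.e.\ $(u_i,u_{i-1})\in\sector_{\underline{s_i}s_{i+1}}=U_{s_i}\cap V_{s_{i+1}}$ by \eqref{e:sectors}. That lemma assumes \eqref{e:connection}, which holds here.

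As a sanity check of that inclusion, suppose a point of $Q$ outside $\bigcup_j V_j$ were mapped by $\phi$ into $Q$. Its first coordinate $u$ would lie in a gap between the strips, such as $(\ru_2,\ru_3)$ or $(\ru_5,\ru_6)$, up to the boundary curves of the strips, where $h(u)>2$ or $h(u)<0$. Then $h(u)-v\notin[0,1]$ for every $v\in[0,1]$, so the image leaves $Q$. Hence points of $\Lambda$ lie in $\bigcup_j V_j$, and the horizontal ordering of the strips identifies which $V_j$ contains the point. This matches the coding in \eqref{e:Usi}.

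\textbf{Contradiction.} With $(u_i,u_{i-1})\in\sector_{\underline{a}0}\cup\sector_{\underline{1}0}\cup\sector_{\underline{1}a}$, Lemma~\ref{l:localization:1sector} gives $v>u$, that is $u_{i-1}>u_i$. This contradicts the monotonicity in \eqref{e:connection}, so every pair of consecutive symbols satisfies $s_i\unlhd s_{i+1}$ and $s$ is nondecreasing.

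The main obstacle I expect is the sector membership step, namely making sure the coding $s$ used in $\tau$ coincides with membership in $U_{s_i}\cap V_{s_{i+1}}$. Once this is granted through Lemma~\ref{l:a_priori}, the remaining argument is a single application of Lemma~\ref{l:localization:1sector}.
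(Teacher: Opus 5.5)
Your proof is correct and is the paper's own argument: assuming $s_i\rhd s_{i+1}$, Lemma~\ref{l:a_priori} with $n=1$ places $(u_i,u_{i-1})$ in $\sector_{\underline{s_i}s_{i+1}}$, which is one of $\sector_{\underline{a}0},\sector_{\underline{1}0},\sector_{\underline{1}a}$, and Lemma~\ref{l:localization:1sector} then gives $u_{i-1}>u_i$, contradicting \eqref{e:connection}. The step you flag as delicate is in fact immediate, since $\phi$ is a global bijection with $\phi(V_j)=U_j$, so $\phi^{-1}(U_j)=V_j$ exactly; your sanity check can therefore be dropped, and its description of $Q\setminus\bigcup_j V_j$ is anyway incomplete, since that set also contains points with $u\in[0,\ru_2]$ and $v\notin[h(u;a,d)-1,h(u;a,d)]$.
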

\begin{proof}
    Consider $u$ a solution of \eqref{e:stationary} satisfying the monotonicity and limit conditions \eqref{e:connection}. The uniqueness of $s\in\Sigma_3$ follows from Theorem~\ref{t:main-uniqueness}. Assume by contradiction that $s$ is not an nondecreasing sequence. Then there exists $i\in \Z$ such that $s_i\rhd s_{i+1}$. Then Lemma~\ref{l:a_priori} implies that $(u_i,u_{i-1})\in\sector_{\underline{s_i}s_{i+1}}$. However, $s_i\rhd s_{i+1}$ implies that $(u_i,u_{i-1})\in\sector_{\underline{a}0}\cup\sector_{\underline{1}0}\cup\sector_{\underline{1}a}$ and Lemma~\ref{l:localization:1sector} leads to $u_{i-1}>u_i$, a contradiction with the monotonicity condition \eqref{e:connection}.
\end{proof}

The key message of this section and Theorem~\ref{t:main-twosol} lies in the fact that the monotonicity of the symbolic sequence $s$ is indeed only a necessary condition for the monotonicity of the solution $u$.

\begin{rmk}[Notation of biinfinite symbolic sequences]
    Lemma~\ref{l:increasing:symbols} implies that $s$ (corresponding to the monotone solution $u$) is an nondecreasing biinfinite symbolic sequence. This implies that it must have the form
    $$
        s = \overline{0}(a)_k\overline{1},
    $$
    by which the notation indicates that the infinite sequence of zeroes $\overline{0}$ is followed by a constant word $w=a\ldots a$ of length $k\in\N_0$, and the infinite sequence of ones $\overline{1}$. 
\end{rmk}

\begin{lem}[At most one $a$ in the symbolic sequence]\label{l:at_most_one_a}
    If $u$ is a solution of \eqref{e:stationary} satisfying \eqref{e:connection} then the unique symbolic sequence $s\in\Sigma_3$ satisfying $\tau(\sigma^i(s))=(u_i,u_{i-1})$ for all $i\in\Z$ contains at most one symbol $a$.
\end{lem}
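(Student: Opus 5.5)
By Lemma~\ref{l:increasing:symbols} and the subsequent remark, the symbolic sequence of a monotone front has the form $s=\overline{0}(a)_k\overline{1}$ with $k\in\N_0$. It therefore suffices to exclude $k\geq 2$. I will argue by contradiction, assuming $k\geq2$. The idea is to find an index at which the solution must strictly decrease, contradicting $u_{i}\le u_{i+1}$ in \eqref{e:connection}.

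Using the translation invariance, I shift $s$ so that the block of $a$'s occupies positions $0,1,\dots,k-1$. Then $s_{-1}=0$, $s_0=s_1=a$, and $s_2$ is either $a$ (if $k\ge3$) or $1$ (if $k=2$). Lemma~\ref{l:a_priori} with $n=2$ gives
\[
(u_0,u_{-1})\in\sector_{s_{-1}\underline{s_0}s_1s_2},
\]
which is either $\sector_{0\underline{a}aa}$ or $\sector_{0\underline{a}a1}$. Lemma~\ref{l:localization:2sector} then yields $u_{-1}>u_0$. This contradicts the monotonicity $u_{-1}\le u_0$, and the claim follows.

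The main obstacle is the indexing convention. One has to check carefully that the anchored word $s_{-1}\underline{s_0}s_1s_2$ in Lemma~\ref{l:a_priori} matches exactly the words appearing in Lemma~\ref{l:localization:2sector}, with the underlined letter in the second position. One also has to confirm that the coordinate convention $\tau(\sigma^i(s))=(u_i,u_{i-1})$, so that $v=u_{i-1}$, turns the conclusion $v>u$ into the inequality $u_{-1}>u_0$.

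The cases $k\ge3$ and $k=2$ are handled by the two sectors $\sector_{0\underline{a}aa}$ and $\sector_{0\underline{a}a1}$ respectively, so no further case analysis is needed. The case $k=2$, where $s_2=1$, is the one that relies on the second sector.
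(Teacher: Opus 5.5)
Your proof is correct and is the paper's argument: reduce to $s=\overline{0}(a)_k\overline{1}$ via Lemma~\ref{l:increasing:symbols}, assume $k\geq 2$, use Lemma~\ref{l:a_priori} with $n=2$ at the first index of the $a$-block to place $(u_i,u_{i-1})$ in $\sector_{0\underline{a}aa}$ or $\sector_{0\underline{a}a1}$, and conclude $u_{i-1}>u_i$ from Lemma~\ref{l:localization:2sector}, contradicting monotonicity. The only difference is that you translate the front so the block starts at $0$, whereas the paper simply picks that index $i$ directly; no translation is actually needed.
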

\begin{proof}
    Lemma~\ref{l:increasing:symbols} implies that $s$ is nondecreasing and has the form $s = \overline{0}(a)_k\overline{1}$ for some $k\in\N_0$. Assume by contradiction that $k>1$. Then, there exists $i\in\mathbb{Z}$ such that
    $$
        (u_i,u_{i-1})\in\sector_{0\underline{a}a1},\text{ or } (u_i,u_{i-1})\in\sector_{0\underline{a}aa}.
    $$
    Lemma~\ref{l:localization:2sector} leads to $u_{i-1}>u_i$, a contradiction with the monotonicity condition \eqref{e:connection}.
\end{proof}

We are now ready to prove Theorem~\ref{t:main-twosol}.
\begin{proof}[Proof of Theorem~\ref{t:main-twosol}]
Lemma~\ref{l:at_most_one_a} implies that $s\in\Sigma_3$ corresponding to the solution of $u$ of \eqref{e:stationary} satisfying \eqref{e:connection} must be of the form $s = \overline{0}\underline{0}\overline{1}$ or $r = \overline{0}\underline{a}\overline{1}$ (up to a shift). Let us prove that both these solutions are indeed increasing.

Let $u$ be a solution of \eqref{e:stationary} such that $\tau(\sigma^i(s))=(u_i,u_{i-1})$ for all $i\in\Z$. Then we have (Lemma~\ref{l:a_priori})
$$
    \ldots, (u_{-1},u_{-2})\in \sector_{00\underline{0}001},\ (u_0,u_{-1})\in \sector_{0\underline{0}01},\ (u_1,u_0)\in \sector_{\underline{0}1},\ (u_2,u_1)\in \sector_{0\underline{1}11},\ (u_3,u_2)\in \sector_{01\underline{1}111},  \ldots
$$
Lemma~\ref{l:localization:2sector}  (applied once) and Lemma~\ref{l:localization:nsector} (applied in the remaining cases) then imply $u_{i-1}<u_i$ for all $i\in\Z$.

Similarly, let $v$ be a solution of \eqref{e:stationary} such that $\tau(\sigma^i(r))=(v_i,v_{i-1})$ for all $i\in\Z$. Then the a priori estimate Lemma~\ref{l:a_priori} yields
$$
    \ldots, (v_{-2},v_{-3})\in \sector_{00\underline{0}00a},\ (v_{-1},v_{-2})\in \sector_{0\underline{0}0a},\ (v_0,v_{-1})\in\sector_{\underline{0}a}, (v_1,v_0)\in \sector_{\underline{a}1},\ (v_2,v_1)\in \sector_{a\underline{1}11},\ (v_3,v_2)\in \sector_{a1\underline{1}111},  \ldots
$$
Lemma~\ref{l:localization:2sector}  (applied twice in this case) and Lemma~\ref{l:localization:nsector} (applied in the remaining cases) then imply $v_{i-1}<v_i$ for all $i\in\Z$.

To conclude the proof, we observe that $u_i,v_i\to 0$ as $i\to-\infty$, which follows from the fact that $(0,0)\in\sector_{0\ldots\underline{0}0\ldots 0}$ for any sector of order $n\in\N$, the contraction of the width of these sectors with $\nu\in(0,1)$ (Lemma~\ref{l:width:preimages:Vi}), and the fact that
\[
    (u_{-i-1},u_{-i-2}), (v_{-i},v_{-i-1})\in \sector_{\underbrace{0\ldots\underline{0}0\ldots0}_{2i\rm{-times}}}.
\]
Similarly, we obtain $u_i,v_i\to 1$ as $i\to\infty$.
\end{proof}

\begin{rmk}
    Let us compare our results to Keener's \cite{Keener1987}. The inclusion of the full alphabet $\alphabet$ (in contrast to the reduced $\mathcal{A}_2=\{0,1\}\subset\alphabet$) has allowed us to show the existence of topological conjugacy between $\phi$ and $\sigma$, see Theorem~\ref{t:main-twosol} and Remark~\ref{r:Cantor_set}. In other words, all stationary solutions $u$ of  \eqref{e:stationary} satisfying \eqref{e:connection} can be represented via the symbolic sequence $s\in\Sigma_3$.
    
    Regarding monotonicity, considering the reduced alphabet $\mathcal{A}_2=\{0,1\}$ allows to formulate Lemma~\ref{l:increasing:symbols} as a necessary and sufficient condition, i.e., all nondecreasing symbolic sequences $s\in\Sigma_2=\mathcal{A}_2^\Z$ yield increasing $u$'s, see the first part of the Proof of Theorem~\ref{t:main-twosol}. The key difference is Lemma~\ref{l:localization:2sector} which shows reversed inequalities for solutions corresponding to symbolic sequences with blocks $(a)_k$, $k\in\N$, $k>1$. The intuition behind this lies in the opposite orientation of horizontal and vertical strips corresponding to the letter $a$, see Lemma~\ref{l:localization:2sector}. Whereas the vertical strips $V_0$, $V_1$ ``increase'' in $u$, the vertical strip $V_a$ ``decreases'', etc., see Figure~\ref{fig:strips:Vi}.    
\end{rmk}

\section{Stability}\label{sec:stability}
In this section we prove Theorem~\ref{t:main-stability}  and show that stationary solutions which are topologically conjugate to biinfinite sequences in $\Sigma_2=\{0,1\}^\Z$ are locally asymptotically $ \ell^{2} $-stable whereas solutions topologically conjugate to sequences in $\Sigma_3\setminus\Sigma_2$ are $ \ell^{2} $-unstable.

Let $ u = (u_{i}) \in \ell^{\infty}(\mathbb{Z}) $ be a stationary solution of~\eqref{e:lde:Nagumo} such that $ u_{i} \in [0,1] $ for all $ i \in \mathbb{Z} $. We study the $ \ell^{2} $-stability of $ u $ by tracking solutions $ \chi(t) $ of~\eqref{e:lde:Nagumo} starting at initial conditions which are $ \ell^{2} $-perturbations of $ u $, i.e.,
\begin{equation}\label{e:stab-perturbation-IC}
\chi(0) = \chi^{0} = u + \psi^{0}, \quad \psi^{0} \in \ell^{2}(\mathbb{Z}).
\end{equation}

We will assume in the following that the reaction function $ g $ in~\eqref{e:lde:Nagumo} is defined on whole $ \mathbb{R} $, satisfies~\ref{hyp:g:nodes}--\ref{hyp:g:shape},
\begin{enumerate}[label=\textit{(g\textsubscript{\arabic*})}]
\setcounter{enumi}{2}
\item \label{hyp:g:bounded-extended-signs} $ g(u) > 0 $ for $ u < 0 $ and $ g(u) < 0 $ for $ u > 1 $,
\end{enumerate}
and $ g \in C^{2}(\mathbb{R}) $ with a bounded second derivative:
\begin{enumerate}[label=\textit{(g\textsubscript{\arabic*})}]
\setcounter{enumi}{3}
\item \label{hyp:g:bounded-2-derivative} there exists $ M > 0 $ such that $ |g''(u)| \leq  M $ for all $ u \in \mathbb{R} $.
\end{enumerate}
We can state these assumptions without loss of generality because we are interested in solutions of~\eqref{e:lde:Nagumo} with values contained in the interval $ [0,1] $ which is invariant thanks to the following maximum principle (i.e., one can suitably extend the reaction function defined on $ [0,1] $).

\begin{thm}[Existence and uniqueness of $ \ell^{\infty} $-solution, interval invariance]\label{t:stab:invariance}
Let $ g \in C^{1}(\mathbb{R}) $ satisfy~\ref{hyp:g:nodes}--\ref{hyp:g:bounded-extended-signs}. Then for every $ \chi^{0} = (\chi_{i}^{0}) \in \ell^{\infty}(\mathbb{Z}) $ there exists a unique classical solution $ \chi: [0,\infty) \to \ell^{\infty}(\mathbb{Z}) $ of
\begin{equation}\label{e:stab:cauchy_chi-elementwise}
\begin{cases}
\chi'(t) = d(\chi_{i-1}(t)-2\chi_{i}(t)+\chi_{i+1}(t)) + g(\chi_{i}(t)), \quad t \geq 0,\\
\chi(0)=\chi^0,
\end{cases}
\end{equation}
and if $ \chi_{i}^{0} \in [\alpha,\beta] $ for all $ i \in \mathbb{Z} $ with $ \alpha \leq 0 $ and $ \beta \geq 1 $ then
\begin{equation}\label{e:stab:WMP}
\chi_{i}(t) \in [\alpha,\beta] \quad \text{for all} \quad t \geq 0 \quad \text{and all} \quad i \in \mathbb{Z}.
\end{equation}
\end{thm}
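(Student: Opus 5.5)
We will treat \eqref{e:stab:cauchy_chi-elementwise} as an ODE $\chi'=F(\chi)$ in the Banach space $\ell^\infty(\mathbb{Z})$, where $F(\chi)=dL\chi+G(\chi)$. Here $(L\chi)_i=\chi_{i-1}-2\chi_i+\chi_{i+1}$ is a bounded linear operator with $\|L\|\le 4$, and $G(\chi)_i=g(\chi_i)$ is the Nemytskii operator.

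\textbf{Step 1: local well-posedness.} Since $g\in C^1(\mathbb{R})$, $G$ is locally Lipschitz on $\ell^\infty(\mathbb{Z})$. On the ball of radius $R$ we have $|g(x)-g(y)|\le \max_{[-R,R]}|g'|\,|x-y|$ uniformly in $i$. The Picard--Lindelöf theorem in Banach spaces then yields a unique local classical solution $\chi\in C^1([0,T),\ell^\infty(\mathbb{Z}))$ on a maximal interval, and either $T=\infty$ or $\|\chi(t)\|_\infty\to\infty$ as $t\to T^-$.

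\textbf{Step 2: invariance of $[\alpha,\beta]^{\mathbb{Z}}$ (the main obstacle).} Because $\mathbb{Z}$ is infinite, a supremum over $i$ need not be attained, so the naive argument of looking at a maximizing index fails. We will therefore argue by a penalized comparison.

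Fix $\varepsilon>0$ and set $\beta_\varepsilon(t)=\beta+\varepsilon e^{Kt}$, where $K$ is a Lipschitz constant of $g$ on a bounded set containing the solution on $[0,T']$, $T'<T$. We claim $\chi_i(t)<\beta_\varepsilon(t)$ for all $i$ and all $t\in[0,T']$.

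Suppose the claim fails. Since $\chi_i$ is equi-Lipschitz in $t$ (as $\|\chi'\|_\infty$ is bounded on $[0,T']$), the quantity $t_0=\inf\{t:\sup_i(\chi_i(t)-\beta_\varepsilon(t))\ge 0\}$ is positive, and $\sup_i(\chi_i(t_0)-\beta_\varepsilon(t_0))=0$.

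To handle a non-attained supremum, pick an index $i$ with $\chi_i(t_0)>\beta_\varepsilon(t_0)-\eta$ for small $\eta$, and estimate at $t_0$:
\[
\chi_i'(t_0)\le d(2\beta_\varepsilon-2\chi_i)+g(\chi_i)\le 2d\eta+g(\beta_\varepsilon)+K\eta .
\]
Since $\beta_\varepsilon>1$, hypothesis \ref{hyp:g:bounded-extended-signs} gives $g(\beta_\varepsilon)<0$, and in fact $g(\beta_\varepsilon)\le g(\beta)+K\varepsilon e^{Kt_0}\le K\varepsilon e^{Kt_0}$, using $g(\beta)\le 0$ for $\beta\ge1$ by \ref{hyp:g:nodes} and \ref{hyp:g:bounded-extended-signs}.

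The comparison is cleanest if we use the function $m(t)=\sup_i\chi_i(t)$, which is Lipschitz, and its upper Dini derivative. Taking near-maximizing indices as above yields
\[
D^+m(t)\le \limsup_{\eta\to0}\bigl(g(m(t))+(2d+K)\eta\bigr)=g(m(t)) \quad\text{whenever } m(t)\ge\beta .
\]
Here the limsup passage uses uniform continuity of $\chi_i'$ in $t$, uniformly in $i$. Since $g(m)\le 0$ for $m\ge\beta\ge1$, a standard Dini-derivative comparison lemma gives $m(t)\le\beta$ for all $t$. The symmetric argument with $\inf_i\chi_i$ together with $g\ge0$ on $(-\infty,0]$ gives $\chi_i\ge\alpha$.

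\textbf{Step 3: global existence.} By Step 2 the solution stays in the bounded set $[\alpha,\beta]^{\mathbb{Z}}$, so the blow-up alternative of Step 1 excludes $T<\infty$. Uniqueness is inherited from Step 1.

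The only delicate point is the Dini-derivative estimate for the supremum over infinitely many indices. We expect it to go through because all $\chi_i$ share a common modulus of continuity for $\chi_i'$, which follows from boundedness of $F$ on bounded sets.
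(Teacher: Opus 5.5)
Your proof is correct. The paper gives no argument of its own. It simply invokes the weak maximum principle and the global existence theorem of Slav\'{i}k et al.~\cite{Slavik2019} (Theorems~4.4 and~4.6). That is the same architecture as your Steps~1--3: local well-posedness in $\ell^\infty(\mathbb{Z})$ with a blow-up alternative, invariance of $[\alpha,\beta]^{\mathbb{Z}}$, and hence global existence.

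What your version adds is that it is self-contained and explicit about hypotheses. You use only $g\in C^1$, \ref{hyp:g:nodes} and \ref{hyp:g:bounded-extended-signs}, so \ref{hyp:g:shape} is not needed. You also isolate the one genuinely infinite-lattice difficulty, the possibly non-attained supremum, and handle it through the upper Dini derivative of $m(t)=\sup_i\chi_i(t)$. The paper's citation is shorter and rests on results proved in a more general setting.

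There are two points to tidy up.
\begin{enumerate}
\item The $\beta_\varepsilon$ paragraph is never completed and can simply be deleted. Note also that $g(\beta_\varepsilon)\le K\varepsilon e^{Kt_0}$ is weaker, not stronger, than $g(\beta_\varepsilon)<0$.
\item In the Dini estimate, an index that nearly maximizes at time $t$ does not by itself control $m(t+h)$. Choose instead $i=i(h)$ with $\chi_i(t+h)>m(t+h)-h^2$. The common Lipschitz bound $C$ gives $m(t)-\chi_i(t)\le 2Ch+h^2$. The uniform modulus of continuity of $\chi'$ in $\ell^\infty(\mathbb{Z})$ then yields $m(t+h)-m(t)\le h\bigl(g(m(t))+o(1)\bigr)$, i.e., $D^+m(t)\le g(m(t))$ for every $t$. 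The restriction to $m(t)\ge\beta$ is unnecessary.
\end{enumerate}
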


\begin{proof}
The statement follows from Slav\'{i}k~et~al.~\cite{Slavik2019} -- Theorems~4.4 (weak maximum principle) and 4.6 (global existence).
\end{proof}




Let us define the linear operator $ L : \ell^{\infty}(\mathbb{Z}) \to \ell^{\infty}(\mathbb{Z}) $ by
\begin{equation}\label{e:stab:linear-oper-diffusion}
(Lu)_{i} = d( u_{i-1} -2u_{i} + u_{i+1} ), \quad i \in \mathbb{Z},
\end{equation}
(note that $ Lu \in \ell^{2}(\mathbb{Z}) $ provided $ u \in \ell^{2}(\mathbb{Z})$ as well) and the nonlinear operator $ G : \ell^{\infty}(\mathbb{Z}) \to \ell^{\infty}(\mathbb{Z}) $ by
\begin{equation}\label{e:stab:nonlinear-oper-reaction}
(G(u))_{i} = g(u_{i}), \quad i \in \mathbb{Z},
\end{equation}
in which $ G(u) \in \ell^{\infty}(\mathbb{Z}) $ follows from the continuity of $ g $. Then we rewrite~\eqref{e:stab:cauchy_chi-elementwise} into the operator form in $ \ell^{\infty}(\mathbb{Z}) $:
\begin{equation}\label{e:stab:cauchy_chi}
\begin{cases}
{\chi}'(t) = L \chi(t) + G\left(\chi(t)\right), \quad t \geq 0,\\
\chi(0)=\chi^0=u+\psi^0.
\end{cases}
\end{equation}

Let us define $ \psi(t) = \chi(t) - u $ and linearize~\eqref{e:stab:cauchy_chi} at the stationary solution $ u $.

\begin{lem}[Linearization of $ G $]\label{l:stab:G-smoothness}
Let $ g \in C^{1}(\mathbb{R}) $. The operator $ G : \ell^{\infty}(\mathbb{Z}) \to \ell^{\infty}(\mathbb{Z}) $ defined by~\eqref{e:stab:nonlinear-oper-reaction} is continuously differentiable at every $ u \in \ell^{\infty}(\mathbb{Z}) $ so that
\begin{equation}\label{e:stab:linearization-G}
G(u+\psi) = G(u) + G'(u)\psi + \omega(u,\psi) \quad \text{for all} \quad \psi \in \ell^{\infty}(\mathbb{Z}),
\end{equation}
in which the differential is given as
\begin{equation}\label{e:stab:differential-G}
(G'(u)\psi)_{i} = g'(u_{i})\psi_{i}, \quad i \in \mathbb{Z},
\end{equation}
and the nonlinear remainder $ \omega(u,\cdot) $ is $ o(\|\psi\|_{\ell^{\infty}}) $.
\end{lem}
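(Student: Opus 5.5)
The plan is to verify Fréchet differentiability of the Nemytskii operator $G$ on $\ell^{\infty}(\mathbb{Z})$ directly, using the uniform continuity of $g'$ on bounded sets. I would fix $u\in\ell^{\infty}(\mathbb{Z})$ and $\psi\in\ell^{\infty}(\mathbb{Z})$, and define the candidate differential $G'(u)$ by \eqref{e:stab:differential-G}. This is a bounded linear operator on $\ell^{\infty}(\mathbb{Z})$ with $\|G'(u)\|\le \sup_{|x|\le \|u\|_{\ell^\infty}}|g'(x)|<\infty$, since $g'$ is continuous on the compact interval $[-\|u\|_{\ell^\infty},\|u\|_{\ell^\infty}]$. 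The remainder is then simply defined by \eqref{e:stab:linearization-G}, i.e., $\omega(u,\psi)_i=g(u_i+\psi_i)-g(u_i)-g'(u_i)\psi_i$, so the identity holds trivially. What remains is to show $\|\omega(u,\psi)\|_{\ell^\infty}=o(\|\psi\|_{\ell^\infty})$ and that $u\mapsto G'(u)$ is continuous.

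For the remainder I would apply the mean value theorem componentwise:
\[
\omega(u,\psi)_i=\bigl(g'(u_i+\theta_i\psi_i)-g'(u_i)\bigr)\psi_i,\quad \theta_i\in(0,1).
\]
Restricting to $\|\psi\|_{\ell^\infty}\le 1$, all arguments lie in the compact set $K=[-\|u\|_{\ell^\infty}-1,\|u\|_{\ell^\infty}+1]$, on which $g'$ is uniformly continuous with some modulus of continuity $\varpi$. Hence $|\omega(u,\psi)_i|\le \varpi(\|\psi\|_{\ell^\infty})\|\psi\|_{\ell^\infty}$ uniformly in $i$. Taking the supremum over $i$ gives $\|\omega(u,\psi)\|_{\ell^\infty}/\|\psi\|_{\ell^\infty}\to0$. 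The essential point is that the bound is \emph{uniform in the index} $i\in\mathbb{Z}$; this is exactly what boundedness of $u$ provides, and it is the only place where care is needed.

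Continuity of the derivative follows from the same modulus. For $\|v-u\|_{\ell^\infty}\le1$,
\[
\|G'(v)-G'(u)\|=\sup_i|g'(v_i)-g'(u_i)|\le\varpi(\|v-u\|_{\ell^\infty}),
\]
which tends to zero. I do not expect a genuine obstacle here, since the argument is the standard proof of smoothness of superposition operators on $\ell^\infty$. The only subtlety worth flagging is that the analogous statement on $\ell^2$ needs more than this. When this is used later for $\ell^2$-perturbations, one still has $\|\omega\|_{\ell^2}\le\varpi(\|\psi\|_{\ell^\infty})\|\psi\|_{\ell^2}$ from the same componentwise estimate, and under \ref{hyp:g:bounded-2-derivative} the sharper bound $|\omega_i|\le \tfrac M2\psi_i^2$ follows from Taylor's theorem.
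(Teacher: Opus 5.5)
Your proposal is correct and follows essentially the same route as the paper: both apply the mean value theorem componentwise and use uniform continuity of $g'$ on a compact interval containing the range of $u$ (with a margin) to get bounds uniform in $i\in\mathbb{Z}$, both for the remainder and for the continuity of $u\mapsto G'(u)$. Your explicit estimate $\|\omega(u,\psi)\|_{\ell^\infty}\le\varpi(\|\psi\|_{\ell^\infty})\|\psi\|_{\ell^\infty}$ spells out the Fr\'echet (not merely directional) differentiability, which the paper's proof only asserts when it refers to ``convergence in $\ell^\infty$''.
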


\begin{proof}
We can proceed by a direct computation of the differential~\eqref{e:stab:differential-G} for $ u, \psi \in \ell^{\infty}(\mathbb{Z}) $ and $ s > 0 $
\[
(G'(u)\psi)_{i} = \lim\limits_{s\to 0} \frac{g(u_{i}+s\psi_{i}) - g(u_{i})}{s} = \lim\limits_{s\to 0} g'(u_{i}+\vartheta_{i}(s))\psi_{i} = g'(u_{i})\psi_{i},
\]
by the mean value theorem ($ 0 \leq |\vartheta_{i}(s)| \leq |s\psi_{i}| \to 0 $ as $ s \to 0 $). The convergence in $ \ell^{\infty}(\mathbb{Z}) $ and continuity of $ G'(\cdot) $ in $ \ell^{\infty}(\mathbb{Z}) $ then follow from the uniform continuity of $ g' $ on compact intervals.
\end{proof}

We show that the linear part of the problem generates a $ C_{0} $-semigroup in $ \ell^{2}(\mathbb{Z}) $ and that the nonlinear remainder $ \omega(u,\cdot) $ is well-defined on $ \ell^{2}(\mathbb{Z}) $, is continuously differentiable in $ \ell^{2} $-norm, and is subquadratic.

\begin{lem}[Linear part $ L + G'(u) $]\label{l:stab:linear-part}
Let $ u \in \ell^{\infty}(\mathbb{Z}) $, $ g \in C^{1}(\mathbb{R}) $, $ L: \ell^{2}(\mathbb{Z}) \to \ell^{2}(\mathbb{Z}) $ be defined by~\eqref{e:stab:linear-oper-diffusion}, and $ G'(u): \ell^{2}(\mathbb{Z}) \to \ell^{2}(\mathbb{Z}) $ be defined by~\eqref{e:stab:differential-G}
. Then the linear operator $ A = L+G'(u) $ is bounded in $ \ell^{2}(\mathbb{Z}) $ and thus, a generator of a $ C_{0} $-semigroup in $ \ell^{2}(\mathbb{Z}) $.
\end{lem}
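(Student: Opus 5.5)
We will show that both summands of $A = L + G'(u)$ are bounded on $\ell^2(\mathbb{Z})$, so that $A$ is a bounded linear operator. Then we will invoke the standard fact that every bounded operator on a Banach space generates a uniformly continuous (hence $C_0$) semigroup $e^{tA}=\sum_{k\ge0} t^kA^k/k!$.

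\textbf{Bounding $L$.} Write $L = d(S_{-} - 2I + S_{+})$, where $(S_{\pm}\psi)_i = \psi_{i\pm1}$ are the shift operators. Each shift is an isometry of $\ell^2(\mathbb{Z})$, since reindexing does not change $\sum_i |\psi_i|^2$. The triangle inequality then gives $\|L\psi\|_{\ell^2} \le 4d\|\psi\|_{\ell^2}$. Linearity of $L$ is immediate, and so is the fact that $L$ maps $\ell^2$ into $\ell^2$, which the paper notes after \eqref{e:stab:linear-oper-diffusion}.

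\textbf{Bounding $G'(u)$.} The operator $G'(u)$ acts as the multiplication operator $(G'(u)\psi)_i = g'(u_i)\psi_i$, see \eqref{e:stab:differential-G}. Since $u\in\ell^\infty(\mathbb{Z})$, all $u_i$ lie in the compact interval $[-\|u\|_{\ell^\infty},\|u\|_{\ell^\infty}]$. On this interval the continuous function $g'$ is bounded by some $K$. Hence
\[
\|G'(u)\psi\|_{\ell^2}^2=\sum_i |g'(u_i)|^2|\psi_i|^2 \le K^2\|\psi\|_{\ell^2}^2,
\]
and $G'(u)$ is bounded with norm at most $K$.

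\textbf{Conclusion.} Combining the two estimates gives $\|A\|\le 4d+K$. The exponential series $e^{tA}$ converges in operator norm, defines a semigroup, and satisfies $\|e^{tA}-I\|\le e^{t\|A\|}-1\to0$ as $t\to0^+$, so the semigroup is strongly continuous with generator $A$. There is no real obstacle here. The only point requiring care is the uniform bound on $g'(u_i)$, and it follows from $u\in\ell^\infty$ together with the continuity of $g'$ on compact sets.
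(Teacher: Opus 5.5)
Your proof is correct and follows the same route as the paper: bound the discrete Laplacian by $4d$ and the multiplication operator by $\sup_i|g'(u_i)|$ (finite because $g'$ is continuous on the compact range of $u$, which you justify and the paper leaves implicit), then use that a bounded operator generates the uniformly continuous, hence $C_0$, semigroup $e^{tA}$. Your estimate $\|A\|\le 4d+K$ via the triangle inequality is the correct form of the paper's bound; the paper's displayed step $\|A\psi\|_{\ell^2}^2\le\sum_i|d(\psi_{i-1}-2\psi_i+\psi_{i+1})|^2+\sum_i|g'(u_i)\psi_i|^2$ drops the cross terms and does not follow from Minkowski's inequality, though this affects only the constant, not boundedness.
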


\begin{proof}
Let $ m = \sup_{i \in \mathbb{Z}} |g'(u_i)|$ and $ \psi \in \ell^{2}(\mathbb{Z}) $. Then, Minkowski's inequality yields
\begin{align*}
\| A\psi\|^2_{\ell^2} & =  \sum_{i \in \mathbb{Z}}|d\left(\psi_{i-1} - 2\psi_{i} + \psi_{i+1}\right) + g'(u_i)\psi_i|^2\\  &\leq  \sum_{i \in \mathbb{Z}}|d\left(\psi_{i-1} - 2\psi_{i} + \psi_{i+1}\right)|^2 + \sum_{i \in \mathbb{Z}}| g'(u_i)\psi_i|^2 \\ &\leq  \left( 16d^{2}  + m^{2} \right) \|\psi\|^2_{\ell^2},
\end{align*}
i.e., the operator $ A $ is bounded and thus generates a uniformly continuous semigroup
\begin{equation}\label{e:semigroup_T}
T(t)=e^{tA}=\sum_{n=0}^{\infty} \frac{(tA)^n}{n!}.
\end{equation}
A uniformly continuous semigroup $T(t)$, $t>0$, of a bounded linear operator on $\ell^{2}(\mathbb{Z}) $ forms a $C_0$-semigroup, i.e., it satisfies $ \lim_{t \to 0^-}T(t)\psi = \psi $ for every $ \psi \in \ell^2(\mathbb{Z}) $.
\end{proof}

Let us note that Lemmas~\ref{l:stab:G-smoothness} and~\ref{l:stab:linear-part} are valid even if the function $ g $ is of class $ C^{1} $. However, the following statement controlling the nonlinear remainder needs the $ C^{2} $-function with a bounded second derivative.

\begin{lem}[Higher order term $ \omega(u,\cdot) $]\label{l:stab:omega-l2}
Let $ u \in \ell^{\infty}(\mathbb{Z}) $, $ g \in C^{2}(\mathbb{R}) $ satisfies~\ref{hyp:g:bounded-2-derivative}, and $ \omega(u,\cdot) $ be the nonlinear remainder from Lemma~\ref{l:stab:G-smoothness}. Then $ \omega(u,\psi) \in \ell^{2}(\mathbb{Z}) $ provided $ \psi \in \ell^{2}(\mathbb{Z}) $, $ \omega(u,\cdot) $ is continuously differentiable in $ \ell^{2} $-norm, and there exists $ C > 0 $ such that
\begin{equation}\label{e:stab:nonlinear-bound-l2}
\| \omega(u,\psi) \|_{\ell^{2}} \leq C \| \psi \|_{\ell^{2}}^{2} \quad \text{for all} \quad \psi \in \ell^{2}(\mathbb{Z}).
\end{equation}
\end{lem}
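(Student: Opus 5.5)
The plan is to work componentwise, using the explicit form of the remainder. By Lemma~\ref{l:stab:G-smoothness}, for each $i\in\Z$ we have
\[
(\omega(u,\psi))_i = g(u_i+\psi_i)-g(u_i)-g'(u_i)\psi_i .
\]
Taylor's theorem with Lagrange remainder, applicable since $g\in C^2(\Real)$, gives $(\omega(u,\psi))_i=\tfrac12 g''(u_i+\theta_i\psi_i)\psi_i^2$ for some $\theta_i\in(0,1)$. Hypothesis~\ref{hyp:g:bounded-2-derivative} then yields the uniform pointwise bound $|(\omega(u,\psi))_i|\le \tfrac M2\psi_i^2$. No bound on $u$ or $\psi$ is needed here, because $M$ is a global bound; this is exactly why \ref{hyp:g:bounded-2-derivative} is assumed.

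Summing the squares gives
\[
\|\omega(u,\psi)\|_{\ell^2}^2\le \tfrac{M^2}{4}\sum_i\psi_i^4\le \tfrac{M^2}{4}\|\psi\|_{\ell^\infty}^2\|\psi\|_{\ell^2}^2\le\tfrac{M^2}{4}\|\psi\|_{\ell^2}^4,
\]
using $\|\psi\|_{\ell^\infty}\le\|\psi\|_{\ell^2}$. This proves at once that $\omega(u,\psi)\in\ell^2(\Z)$ and that \eqref{e:stab:nonlinear-bound-l2} holds with $C=M/2$.

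For continuous differentiability in the $\ell^2$-norm, the candidate derivative at $\psi$ is the multiplication operator
\[
(D\omega(u,\psi)\eta)_i=\bigl(g'(u_i+\psi_i)-g'(u_i)\bigr)\eta_i .
\]
Its multiplier is bounded by $M|\psi_i|\le M\|\psi\|_{\ell^2}$, so $D\omega(u,\psi)$ is a bounded operator on $\ell^2(\Z)$.

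\begin{itemize}
\item \emph{Fréchet derivative.} The remainder has components
\[
\omega(u,\psi+\eta)_i-\omega(u,\psi)_i-(D\omega\,\eta)_i = g(u_i+\psi_i+\eta_i)-g(u_i+\psi_i)-g'(u_i+\psi_i)\eta_i ,
\]
and by the same Taylor argument each is bounded by $\tfrac M2\eta_i^2$. The $\ell^2$ norm of the remainder is therefore at most $\tfrac M2\|\eta\|_{\ell^2}^2=o(\|\eta\|_{\ell^2})$.
\item \emph{Continuity of the derivative.} The operator norm of $D\omega(u,\psi)-D\omega(u,\tilde\psi)$ equals $\sup_i|g'(u_i+\psi_i)-g'(u_i+\tilde\psi_i)|$. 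By the mean value theorem and \ref{hyp:g:bounded-2-derivative} this is at most $M\|\psi-\tilde\psi\|_{\ell^\infty}\le M\|\psi-\tilde\psi\|_{\ell^2}$, so $\psi\mapsto D\omega(u,\psi)$ is even Lipschitz.
\end{itemize}

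The main point of care is that $g'$ is only uniformly continuous on compacts, which would not control the supremum over all $i$. Because $g''$ is globally bounded, the Lipschitz estimate on $g'$ replaces any compactness argument, and nothing beyond \ref{hyp:g:bounded-2-derivative} is needed. The proof is therefore routine once the Taylor bound is in place.
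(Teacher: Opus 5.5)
Your proof is correct and follows essentially the same route as the paper. Like the paper, you use a componentwise Taylor/mean-value bound $|\omega(u,\psi)_i|\le \tfrac M2\psi_i^2$ from the global bound on $g''$, then $\sum_i\psi_i^4\le\|\psi\|_{\ell^2}^4$ (which the paper phrases via the embeddings $\ell^2\hookrightarrow\ell^4\hookrightarrow\ell^\infty$), and identify the derivative as the multiplication operator by $g'(u_i+\psi_i)-g'(u_i)$. Where the paper only sketches the $\ell^2$-convergence and the continuity of the derivative, you make them explicit with a quadratic Fr\'echet remainder estimate and a Lipschitz bound, which is a cleaner version of the same argument.
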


\begin{proof}
If $ \psi \in \ell^{2}(\mathbb{Z}) $, then $ \|\psi\|_{\ell^{\infty}} \leq c_{1} \|\psi\|_{\ell^{4}} \leq c_{1} c_{2} \| \psi \|_{\ell^{2}} $ ($ c_{1}, c_{2} > 0 $ independent on $ \psi $), since $ \ell^{2}(\mathbb{Z}) \hookrightarrow \ell^{4}(\mathbb{Z}) \hookrightarrow \ell^{\infty}(\mathbb{Z}) $. Then using~\eqref{e:stab:linearization-G}, the mean value theorem twice, and~\ref{hyp:g:bounded-2-derivative} we obtain for $ \psi \in \ell^{2}(\mathbb{Z}) $ 
\begin{align*}
\| \omega(u,\psi) \|_{\ell^{2}}^{2} & = \sum\limits_{i\in\mathbb{Z}}  | g(u_{i}+\psi_{i}) - g(u_{i}) - g'(u_{i}) \psi_{i} |^{2} = \sum\limits_{i\in\mathbb{Z}}  | g''(u_{i}+\vartheta_{i}) |^{2} |\psi_{i}|^{4}
\leq M^{2} \| \psi \|_{\ell^{4}}^{4} \leq M^{2} c_{2}^{4} \| \psi \|_{\ell^{2}}^{4},
\end{align*}
in which $ -|\psi_{i}| \leq \vartheta_{i} \leq |\psi_{i}| $. This proves that $ \omega(u,\psi) \in \ell^{2}(\mathbb{Z}) $ as well as the inequality~\eqref{e:stab:nonlinear-bound-l2}. We compute, the differential of $ \omega(u,\cdot) $ for $ \psi, \xi \in \ell^{2}(\mathbb{Z}) $ as
\begin{align*}
(\omega'(u,\psi)\xi)_{i} & = \lim\limits_{s\to 0} \frac{g(u_{i}+\psi_{i}+s\xi_{i})-g'(u_{i})(\psi_{i}+s\xi_{i}) - g(u_{i}+\psi_{i})+g'(u_{i})\psi_{i}}{s} \\
& = \lim\limits_{s\to 0} \left( g'(u_{i}+\psi_{i}+\vartheta_{i}(s))\xi_{i} - g'(u_{i})\xi_{i} \right) \\
& = (g'(u_{i}+\psi_{i}) - g'(u_{i}))\xi_{i},
\end{align*}
in which $ 0 \leq |\vartheta_{i}(s)| \leq |s\xi_{i}| \to 0 $. The convergence in the $ \ell^{2} $-norm and the continuity of $ \omega'(u,\cdot) $ follows again by the mean value theorem, the H\"{o}lder's inequality, the embeddings $ \ell^{2}(\mathbb{Z}) \hookrightarrow \ell^{4}(\mathbb{Z}) \hookrightarrow \ell^{\infty}(\mathbb{Z}) $, and the boundedness of $ g'' $ by~\ref{hyp:g:bounded-2-derivative}.
\end{proof}

Consequently, we can finally show that $ \chi(t) $ is the classical solution of~\eqref{e:stab:cauchy_chi} on $ \ell^{\infty}(\mathbb{Z}) $ whenever $ \psi(t) = \chi(t) - u $ is the classical solution of the following problem in $ \ell^{2}(\mathbb{Z}) $:
\begin{equation}\label{e:stab:cauchy_psi}
\begin{cases}
{\psi}'(t) = L \psi(t) + G'(u)\psi(t) + \omega(u,\psi(t)), \quad t \geq 0,\\
\psi(0)=\psi^0.
\end{cases}
\end{equation}
The following claim formalizes the existence and uniqueness of the classical solution of the $ \ell^{2} $-problem~\eqref{e:stab:cauchy_psi}.

\begin{thm}[Existence and uniqueness for $ \ell^{2} $-perturbation]\label{t:stab:EandU-l2}
Let $ u \in \ell^{\infty}(\mathbb{Z}) $ and $ g \in C^{2}(\mathbb{R}) $ satisfies~\ref{hyp:g:bounded-2-derivative}. Then there exists a unique classical solution $ \psi: [0,t_{\max}) \to \ell^{2}(\mathbb{Z}) $ of~\eqref{e:stab:cauchy_psi} and either $ t_{\max} = \infty $, or $ \|\psi(t)\|_{\ell^{2}} \to \infty $ as $ t \to t_{\max} $.
\end{thm}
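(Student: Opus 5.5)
The plan is to treat \eqref{e:stab:cauchy_psi} as a semilinear abstract Cauchy problem $\psi' = A\psi + F(\psi)$ in the Hilbert space $\ell^2(\mathbb{Z})$. Here $A = L + G'(u)$ and $F(\psi) = \omega(u,\psi)$, and we apply the standard local existence and continuation theory for $C_0$-semigroups with locally Lipschitz (indeed $C^1$) nonlinearities, e.g.\ Pazy's theorem on classical solutions of semilinear equations. Since $A$ is bounded by Lemma~\ref{l:stab:linear-part}, the problem is actually an ODE in a Banach space, $\psi' = \Phi(\psi)$ with $\Phi(\psi) = A\psi + \omega(u,\psi)$. We can therefore work entirely with the Picard--Lindelöf theorem in $\ell^2(\mathbb{Z})$, without any regularity subtleties about the domain of $A$.

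First we verify that $\Phi:\ell^2(\mathbb{Z})\to\ell^2(\mathbb{Z})$ is well defined and locally Lipschitz. Lemma~\ref{l:stab:linear-part} gives $\|A\psi\|_{\ell^2}\le (16d^2+m^2)^{1/2}\|\psi\|_{\ell^2}$. Lemma~\ref{l:stab:omega-l2} gives $\omega(u,\psi)\in\ell^2$ and shows that $\omega(u,\cdot)$ is continuously differentiable in the $\ell^2$-norm. For a concrete Lipschitz bound, we would use the mean value theorem and \ref{hyp:g:bounded-2-derivative} componentwise:
\[
|\omega(u,\psi)_i-\omega(u,\xi)_i| = |g'(u_i+\eta_i)-g'(u_i)|\,|\psi_i-\xi_i| \le M(\|\psi\|_{\ell^\infty}+\|\xi\|_{\ell^\infty})|\psi_i-\xi_i|,
\]
with $\eta_i$ between $\psi_i$ and $\xi_i$. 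Combined with $\|\cdot\|_{\ell^\infty}\le\|\cdot\|_{\ell^2}$, this yields $\|\omega(u,\psi)-\omega(u,\xi)\|_{\ell^2}\le 2MR\|\psi-\xi\|_{\ell^2}$ on the ball of radius $R$. Hence $\Phi$ is Lipschitz on bounded sets.

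Next we apply the Banach fixed point theorem to the integral equation $\psi(t)=\psi^0+\int_0^t\Phi(\psi(s))\,ds$, or equivalently the mild formulation $\psi(t)=T(t)\psi^0+\int_0^tT(t-s)\omega(u,\psi(s))\,ds$ with $T$ from \eqref{e:semigroup_T}. This is done in $C([0,\delta];\ell^2)$ on a closed ball around $\psi^0$, with $\delta$ depending only on $\|\psi^0\|_{\ell^2}$. This gives local existence and uniqueness. Because $\Phi$ is continuous, the integral equation forces $\psi\in C^1([0,\delta];\ell^2)$, so the solution is classical. Uniqueness of classical solutions follows from Gronwall's inequality applied to the difference of two solutions, using the local Lipschitz bound. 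Gluing local solutions then yields a unique maximal solution on $[0,t_{\max})$.

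The main obstacle is the blow-up alternative. Suppose $t_{\max}<\infty$ and that $\|\psi(t)\|_{\ell^2}\to\infty$ fails, so that $\|\psi(t_n)\|\le R$ along a sequence $t_n\to t_{\max}$. Since the local existence time $\delta(R)$ depends only on $R$, restarting from $\psi(t_n)$ with $t_n>t_{\max}-\delta(R)$ extends the solution beyond $t_{\max}$, which is a contradiction. This only shows $\limsup\|\psi(t)\|=\infty$. To upgrade it to a full limit, we would use the standard argument: if $\|\psi(t)\|\le R$ held on some interval $[t_n,s_n]$ with a jump to $2R$, then the length $s_n-t_n$ would be bounded below by $R/\sup_{\|\xi\|\le2R}\|\Phi(\xi)\|$. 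Infinitely many such excursions before $t_{\max}$ would then be impossible, so $\liminf=\infty$ as well.
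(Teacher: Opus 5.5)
Your proof is correct. It takes a more hands-on route than the paper, whose proof is a single citation: Lemma~\ref{l:stab:linear-part} ($A$ generates a $C_0$-semigroup), Lemma~\ref{l:stab:omega-l2} ($\omega(u,\cdot)$ is $C^1$ on $\ell^2$), and the semilinear well-posedness theory of Pazy's Section~6.1, with Theorem~6.1.5 upgrading mild solutions to classical ones.

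You instead use the fact that $A$ is bounded. Then $D(A)=\ell^2(\mathbb{Z})$, mild and classical solutions coincide, and the problem is an autonomous ODE $\psi'=\Phi(\psi)$ in a Banach space. Consequently you need only local Lipschitz continuity of $\omega(u,\cdot)$, not its $C^1$ property. You derive that Lipschitz bound directly from \ref{hyp:g:bounded-2-derivative} and $\|\cdot\|_{\ell^\infty}\le\|\cdot\|_{\ell^2}$, with the explicit constant $2MR$.

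The comparison is this. The paper's citation is shorter and would survive an unbounded generator. Your argument is self-contained, asks less of the nonlinearity, and gives explicit local existence times.

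Two small remarks.

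\begin{enumerate}
\item Your restart argument already yields the full limit, not only $\limsup=\infty$. The negation of $\|\psi(t)\|_{\ell^2}\to\infty$ is exactly the existence of $R$ and $t_n\to t_{\max}$ with $\|\psi(t_n)\|_{\ell^2}\le R$, and that is what you refute. In fact your argument shows $\|\psi(t)\|_{\ell^2}>R$ on all of $(t_{\max}-\delta(R),t_{\max})$. The excursion argument is correct but redundant.
\item Do not quote the constant $(16d^2+m^2)^{1/2}$ from Lemma~\ref{l:stab:linear-part}. Its derivation drops the cross term, and the bound is false in general: for $g'(u_i)\equiv -m$ one has $A=L-mI$ and $\|A\|=4d+m$. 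Use $\|A\|\le 4d+m$ instead; only boundedness of $A$ matters for your argument.
\end{enumerate}
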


\begin{proof}
The statement is an immediate consequence of Lemmas~\ref{l:stab:linear-part}, \ref{l:stab:omega-l2} and Pazy~\cite[Theorem~6.1.5]{Pazy1983}.
\end{proof}

\begin{lem}[Solutions equivalence]\label{l:stab:equivalence}
Let $ u \in \ell^{\infty}(\mathbb{Z}) $ be a stationary solution of~\eqref{e:stab:cauchy_chi} (i.e., $ Lu + G(u) = 0 $), $ \psi^{0} \in \ell^{2}(\mathbb{Z}) $, and $ g \in C^{2}(\mathbb{R}) $ satisfies~\ref{hyp:g:nodes}--\ref{hyp:g:bounded-2-derivative}. Then, $ \psi(t) \in \ell^{2}(\mathbb{Z}) $, $ t \in [0,t_{\max}) $, is the unique classical solution of~\eqref{e:stab:cauchy_psi} on $ \ell^{2}(\mathbb{Z}) $ if and only if $ \chi(t) = u + \psi(t) \in \ell^{\infty}(\mathbb{Z}) $, $ t \in [0,t_{\max}) $, is the unique classical solution of~\eqref{e:stab:cauchy_chi} on $ \ell^{\infty}(\mathbb{Z}) $.
\end{lem}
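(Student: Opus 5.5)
The plan is to prove both implications by a direct algebraic substitution. Uniqueness on each side then follows from Theorem~\ref{t:stab:invariance} (in $\ell^{\infty}$) and Theorem~\ref{t:stab:EandU-l2} (in $\ell^{2}$). The key identity is obtained by adding the stationary equation $Lu+G(u)=0$ to the linearization \eqref{e:stab:linearization-G}:
\[
L(u+\psi)+G(u+\psi) = L\psi + G'(u)\psi + \omega(u,\psi) + \bigl(Lu+G(u)\bigr) = L\psi + G'(u)\psi + \omega(u,\psi).
\]
Since $u$ does not depend on $t$, we also have $\chi'(t)=\psi'(t)$ whenever $\chi=u+\psi$. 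Consequently, the right-hand sides of \eqref{e:stab:cauchy_chi} and \eqref{e:stab:cauchy_psi} coincide pointwise in $t$, and the initial conditions $\chi^0=u+\psi^0$ match.

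\emph{($\Rightarrow$)} Let $\psi$ be the classical $\ell^{2}$-solution on $[0,t_{\max})$. Since $\ell^{2}(\mathbb{Z})\hookrightarrow\ell^{\infty}(\mathbb{Z})$ continuously with $\|\cdot\|_{\ell^\infty}\le\|\cdot\|_{\ell^2}$, the map $\psi$ is also continuously differentiable as an $\ell^{\infty}$-valued map. The identity above then shows that $\chi=u+\psi$ is a classical solution of \eqref{e:stab:cauchy_chi} in $\ell^{\infty}$. By Theorem~\ref{t:stab:invariance} the $\ell^{\infty}$-solution is unique, so $\chi$ is that unique solution restricted to $[0,t_{\max})$.

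\emph{($\Leftarrow$)} Let $\chi$ be the unique $\ell^{\infty}$-solution and set $\psi=\chi-u$. The identity shows that $\psi$ solves the equation in \eqref{e:stab:cauchy_psi} in the $\ell^{\infty}$ sense. It remains to show that $\psi(t)\in\ell^{2}$ and that $\psi$ is $C^1$ into $\ell^{2}$. My plan for this is to avoid proving regularity directly and instead compare $\psi$ with the $\ell^{2}$-solution $\tilde\psi$ from Theorem~\ref{t:stab:EandU-l2}, which has the same initial datum. By the forward direction, $u+\tilde\psi$ is an $\ell^{\infty}$-solution. The uniqueness part of Theorem~\ref{t:stab:invariance} then gives $\chi=u+\tilde\psi$ on $[0,t_{\max})$, hence $\psi=\tilde\psi$ there. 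This settles the $\ell^{2}$-regularity and uniqueness on the common interval.

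\textbf{Main obstacle.} The delicate point is the domain of existence: whether $t_{\max}$ for the $\ell^{2}$-problem can be smaller than the global $\ell^{\infty}$ existence time. The lemma is stated on $[0,t_{\max})$, so I only need the equivalence there, which the argument above gives. If I also wanted to rule out $\ell^{2}$-blowup, I would use the global $\ell^{\infty}$ bound from \eqref{e:stab:WMP}. It gives $\sup_t\|\psi(t)\|_{\ell^\infty}<\infty$, so $|\omega_i|\le C|\psi_i|$ componentwise, with $C$ depending on $\sup|g'|$ over the invariant interval. Then $\frac{d}{dt}\|\psi\|_{\ell^2}^2\le K\|\psi\|_{\ell^2}^2$, and Gronwall excludes blowup in finite time. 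The only remaining routine check is that the $\ell^{\infty}$-derivative of an $\ell^{2}$-valued $C^1$ map coincides with its $\ell^{2}$-derivative, which follows from the continuous embedding.
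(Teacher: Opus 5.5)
Your proposal is correct and follows the paper's proof essentially step by step: the forward direction substitutes the linearization together with $Lu+G(u)=0$, and the reverse direction compares $\chi-u$ with the $\ell^2$-solution $\tilde\psi$ from Theorem~\ref{t:stab:EandU-l2} and then invokes $\ell^\infty$-uniqueness to get $\psi=\tilde\psi$ on $[0,t_{\max})$. Your additional Gronwall argument, which uses the uniform $\ell^\infty$ bound from the maximum principle to exclude $\ell^2$-blowup and hence give $t_{\max}=\infty$, is not in the paper's proof, but it is sound and useful, since the paper's stability definition tacitly assumes global $\ell^2$-solutions.
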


\begin{proof}
Let $ \psi(t) \in \ell^{2}(\mathbb{Z}) $, $ t \in [0,t_{\max}) $, be the unique classical solution of~\eqref{e:stab:cauchy_psi} on $ \ell^{2}(\mathbb{Z}) $ and $ \chi(t) = u+\psi(t) $, $ t \in [0,t_{\max}) $. Then immediately $ \chi(t) \in \ell^{\infty}(\mathbb{Z}) $ because $ \ell^{2}(\mathbb{Z}) \subset \ell^{\infty}(\mathbb{Z}) $. Moreover, applying~$ Lu + G(u) = 0 $ and~Lemma~\ref{l:stab:G-smoothness} we get
\begin{align*}
\chi'(t) = (u+\psi(t))' = \psi'(t) & = L\psi(t) + G'(u)\psi(t) + \omega(u,\psi(t)) \\
& = Lu + G(u) + L\psi(t) + G'(u)\psi(t) + \omega(u,\psi(t)) \\
& = L(u+\psi(t)) + G(u+\psi(t)) \\
& = L\chi(t) + G(\chi(t))
\end{align*}
and $ \chi(0) = u + \psi^{0} $. Thus, $ \chi(t) = u + \psi(t) $ is the unique classical solution of~\eqref{e:stab:cauchy_chi} on $ \ell^{\infty}(\mathbb{Z}) $.

Let $ \chi(t) \in \ell^{\infty}(\mathbb{Z}) $, $ t \geq 0 $, be the unique classical solution of~\eqref{e:stab:cauchy_chi} on $ \ell^{\infty}(\mathbb{Z}) $ and $ \psi(t) = \chi(t) - u $, $ t \geq 0 $. Then, we obtain by Lemma~\ref{l:stab:G-smoothness} and~$ Lu + G(u) = 0 $ again
\begin{align*}
\psi'(t) = (\chi(t)-u)' = \chi'(t) = L\chi(t) - G(\chi(t)) & = L(u + \psi(t)) + G(u+\psi(t)) \\
& = Lu + L\psi(t) + G(u)+ G'(u)\psi(t) + \omega(u,\psi(t))\\
& = L\psi(t) + G'(u)\psi(t) + \omega(u,\psi(t)),
\end{align*}
and $ \psi(0) = \psi^{0} $, i.e., $ \psi(t) = \chi(t) - u $ satisfies~\eqref{e:stab:cauchy_psi}. We finally verify that $ \psi(t) \in \ell^{2}(\mathbb{Z}) $, $ t \in [0,t_{\max}) $. Indeed, the problem~\eqref{e:stab:cauchy_psi} has a unique classical solution $ \tilde{\psi}(t) \in \ell^{2}(\mathbb{Z}) $, $ t \in [0,t_{\max}) $, by Theorem~\ref{t:stab:EandU-l2}. Since $ \ell^{2}(\mathbb{Z}) \subset \ell^{\infty}(\mathbb{Z}) $, then analogically as in the first part of the proof, $ \tilde{\chi}(t) = u + \tilde{\psi}(t) $, $ t \in [0,t_{\max}) $, is a classical solution of~\eqref{e:stab:cauchy_chi}. Since~\eqref{e:stab:cauchy_chi} has the unique classical solution on $ [0,\infty) $, then there has to necessarily be $ \chi(t) = \tilde{\chi}(t) \in \ell^{\infty}(\mathbb{Z}) $, i.e., $ \psi(t) = \tilde{\psi}(t) \in \ell^{2}(\mathbb{Z}) $, $ t \in [0,t_{\max}) $.
\end{proof}

Lemma~\ref{l:stab:equivalence} justifies the following definition of $ \ell^{2} $-stability for~\eqref{e:lde:Nagumo} (or the abstract problem~\eqref{e:stab:cauchy_chi}).

\begin{defn}[$ \ell^{2} $-stability]\label{d:stab:stability}
Let $ u \in \ell^{\infty}(\mathbb{Z}) $ be a stationary solution of~\eqref{e:lde:Nagumo} (i.e., of~\eqref{e:stab:cauchy_chi} as well) and $ g \in C^{2}(\mathbb{R}) $ satisfies~\ref{hyp:g:nodes}--\ref{hyp:g:bounded-2-derivative}.
\begin{enumerate}
\item The stationary solution $ u \in \ell^{\infty}(\mathbb{Z}) $ is \emph{locally $ \ell^{2} $-stable} if for every $\varepsilon>0$ there exists $\delta>0$ such that for every $ \psi^{0} \in \ell^{2}(\mathbb{Z}) $ satisfying $\| \psi^{0} \|_{\ell^{2}} <\delta $ the unique solution $ \psi(t) \in \ell^{2}(\mathbb{Z}) $, $ t \geq 0 $, of~\eqref{e:stab:cauchy_psi} satisfies
\[
\| \psi(t) \|_{\ell^{2}} < \varepsilon \quad \text{for all} \quad t \in [0,\infty).
\]
\item The stationary solution $ u \in \ell^{\infty}(\mathbb{Z}) $ is \emph{locally asymptotically $ \ell^{2} $-stable} if it is locally $ \ell^{2} $-stable and \emph{attractive}, i.e., if there exists $\gamma > 0$ such that for every $ \psi^{0} \in \ell^{2}(\mathbb{Z}) $ satisfying $\| \psi^{0} \|_{\ell^{2}} < \gamma $ the unique solution $ \psi(t) \in \ell^{2}(\mathbb{Z}) $, $ t \geq 0 $, of~\eqref{e:stab:cauchy_psi} satisfies
\[
\| \psi(t) \|_{\ell^{2}} \to 0 \quad \text{as} \quad t \to \infty.
\]
\item The stationary solution $ u \in \ell^{\infty}(\mathbb{Z}) $ is \emph{$ \ell^{2} $-unstable} if it is not locally $ \ell^{2} $-stable.
\end{enumerate}
\end{defn}

We have finally collected all necessary concepts to prove Theorem~\ref{t:main-stability}.

\begin{proof}[Proof of Theorem~\ref{t:main-stability}]
Let $ u = (u_i) \in \ell^{\infty}(\mathbb{Z}) $ be a stationary solution of the LDE~\eqref{e:lde:Nagumo} corresponding to $s\in\Sigma_3$. Since $ g \in C^{2}([0,1]) $, we are interested only in the solutions of~\eqref{e:lde:Nagumo} that possess values in the interval $ [0,1] $ which is invariant by Theorem~\ref{t:stab:invariance} ($ \alpha = 0 $, $ \beta = 1 $). Therefore, let us extend, without loss of generality, $ g $ outside the interval $ [0,1] $ to be defined on $ \mathbb{R} $ and satisfy~\ref{hyp:g:bounded-extended-signs}--\ref{hyp:g:bounded-2-derivative}.

Let $ L: \ell^{2}(\mathbb{Z}) \to \ell^{2}(\mathbb{Z}) $ be defined by~\eqref{e:stab:linear-oper-diffusion} and $ G'(u): \ell^{2}(\mathbb{Z}) \to \ell^{2}(\mathbb{Z}) $ be the differential~\eqref{e:stab:differential-G} of $ G $ at the stationary solution $ u \in \ell^{\infty}(\mathbb{Z}) $ of~\eqref{e:lde:Nagumo}. By Lemma~\ref{l:stab:linear-part}, the operator $ A = L + G'(u) $ is linear and bounded in the Hilbert space $ \ell^{2}(\mathbb{Z}) $. Moreover, $ A $ is self-adjoint, i.e., $\langle A \psi, \psi\rangle_{\ell^2} = \langle \psi, A \psi \rangle_{\ell^2}$ for all $ \psi \in \ell^{2}(\mathbb{Z}) $. Thus, the spectrum\footnote{In this section, we use the standard notation for the spectrum, denoted by $\sigma$. This should not be confused with the shift on biinfinite sequences $\Sigma_3$ introduced in Sec.~\ref{sec:symbolic:Moser} and which does not appear in the present section. Likewise, we denote the resolvent set by $\rho$, which should not be confused with the reflection map introduced in the proof of Lemma~\ref{l:vertical:strips:onto:horizontal}. } $\sigma(A) \subset \mathbb{R}$ and by \cite[Theorem~2.20]{Teschl2014}
\begin{equation}\label{e:stab:estimate_teschl}
\sup \sigma(A) =\sup_{\|\psi\|_{\ell^2}=1}\langle A \psi, \psi\rangle_{\ell^2}.  
\end{equation}
Finally, the nonlinear remainder $\omega(u,\cdot): \ell^{2}(\mathbb{Z}) \to \ell^{2}(\mathbb{Z}) $ is continuously differentiable on $ \ell^{2}(\mathbb{Z})$ and grows subquadratically by~Lemma~\ref{l:stab:omega-l2}.

Consequently, the $ \ell^{2} $-stability of $ u $ is determined by the position of spectrum $ \sigma(A) $ by~\cite[Theorems~5.1.1 and~5.1.3]{Henry1981}. Specifically, if there exists $ m > 0 $ such that $ \sigma(A) \subset (-\infty, -m] $ then $ u $ is locally asymptotically $ \ell^{2} $-stable; if $\sigma(A) \cap (0,\infty) \neq \emptyset $ then $ u $ is $ \ell^{2} $-unstable.

    \begin{enumerate}[label=(\roman*)]
        \item If $u_i \in [0,a_1) \cup (a_2,1]$ (or equivalently $s_i\in\{0,1\}$) for all $ i \in \mathbb{Z} $, then $u_i \in [0,\ru_2] \cup [\ru_6,1]$ for all $i \in \mathbb{Z}$ by the construction of sectors $ \sector_w $. Thus, $g'(u_i)\leq -m <0$ for all $i \in \mathbb{Z}$ and we estimate for $ \psi \in \ell^2$
        \begin{align}
        \begin{split}\label{e:stab:estimate_unit_product}
            \langle A\psi,\psi \rangle_{\ell^2} & = \langle L\psi,\psi \rangle_{\ell^2} + \langle G'(u)\psi,\psi \rangle_{\ell^2}\\[1ex]
            & = d \sum_{i \in \mathbb{Z}}\left(\psi_{i-1}\psi_i -2\psi_i^2 + \psi_i\psi_{i+1}\right) + \sum_{i \in \mathbb{Z}} g'(u_i)\psi_i^2\\[1ex]
            & \leq -d \sum_{i \in \mathbb{Z}} \left(\psi_i-  \psi_{i+1}\right)^2  -m \sum_{i \in \mathbb{Z}} \psi_i^2\\[1ex]
            & \leq -m \|\psi\|^2_{\ell^2},
        \end{split}
        \end{align}
        which yields $ \sigma(A) \subset (-\infty, -m] $ and $ u $ is locally asymptotically $ \ell^{2} $-stable (it is locally asymptotically stable with respect to all initial conditions, i.e., also for those with values in $ [0,1] $ where the extension of $ g $ does not play any role).
        \item If there exists an index $i_0 \in \mathbb{Z}$ such that $u_{i_0} \in (a_1,a_2)$ (or equivalently $s_{i_0}=a$), then $u_{i_0} \in [\ru_3,\ru_5]$ (by the construction of sectors $ \sector_w $) and $g'(u_{i_0})>0$. Let $\psi = e_{i_0}$, i.e., $\psi_{i_0} =1$ and $\psi_i=0$ if $i \not= i_0$. Then $\|\psi\|_{\ell^2}=1$ and
        \[
        \langle A \psi, \psi\rangle_{\ell^2} = \langle A e_{i_0}, e_{i_0} \rangle_{\ell^2}=-2d + g'(u_{i_0})>0,
        \]
        i.e., $ \sigma(A) \cap (0,\infty) \neq \emptyset $ and $ u $ is $ \ell^{2} $-unstable (recall that $ u_{i} \in (0,1) $ for all $ i \in \mathbb{Z} $ in this case, i.e., $ u $ is unstable with respect to the initial conditions with values in $ [0,1] $, i.e., the extension of $ g $ does not play any role again). \qedhere
    \end{enumerate}
\end{proof}

Alternatively, we can consider the stability of stationary states $u = (u_i) \in \ell^{\infty}(\mathbb{Z}) $ of \eqref{e:lde:Nagumo} with respect to bounded perturbations $\psi^0 \in \ell^{\infty}(\mathbb{Z}) $ instead of $\psi^0 \in \ell^{2}(\mathbb{Z}) $. Naturally, a slightly different approach is required, since we work in a Banach space in this case.


\begin{rmk}[$\ell^{\infty}$-stability]
    Both Lemmas~\ref{l:stab:linear-part} and~\ref{l:stab:omega-l2} remain valid when the Hilbert space $ \ell^{2}(\mathbb{Z}) $ is replaced by the Banach space $ \ell^{\infty}(\mathbb{Z}) $. In this case, we can obtain the estimates
    \[
    \|A\psi\|_{\ell^\infty} \leq (4d + m) \|\psi\|_{\ell^\infty} \quad \text{ and } \quad \|\omega(u,\psi)\|_{\ell^\infty} \leq M \|\psi\|_{\ell^\infty}^2
    \]
    in the norm of $ \ell^{\infty}(\mathbb{Z}) $. In general, the linear operator $ A = L+ G'(u) $  is no longer self-adjoint and the spectrum $\sigma(A) \subset \mathbb{C}$. However, we show that $\sigma_{\ell^2}(A)=\sigma_{\ell^\infty}(A)$.
    
    The linear operator $ A $ is a band-dominated operator~\cite[Definition~2.1.5]{Rabinovitch2004}. In particular, defining $ \alpha^{-1}, \alpha^{1} \in \ell^{\infty}(\mathbb{Z})$ such that $ \alpha^{-1} = \alpha^{1} \equiv d $ and $ \alpha^{0} \in \ell^{\infty}(\mathbb{Z})$ such that $ \alpha^0_i = g'(u_i)-2d $ (recall that $g \in C^{1}(\mathbb{R})$) we have
    \[
    A = \sum_{j \in \{-1,0,1\}} \alpha^j S^j,
    \]
    where $S^j$, $ j \in \{-1,0,1\} $ are shifts $(S^j u)_i = u_{i-j}$, $ i \in \mathbb{Z} $. Thus, both $ A $ and $ A_\lambda := A-\lambda I $ for all $ \lambda \in \mathbb{C} $ are elements of the Wiener algebra~\cite[Section~2.5]{Rabinovitch2004}.
    

    Let $\lambda \in \rho_{\ell^2}(A)$, where $ \rho_{\ell^2}(A)$ denotes the resolvent of $ A $ in $ \ell^2(\Z) $. Thus $A_\lambda^{-1}$ exists and is bounded, i.e., $\| A_\lambda^{-1} \|_{\ell^2}<\infty$. By~\cite[Corollary~2.5.4]{Rabinovitch2004} if $A_\lambda$ is invertible in a specific $\ell^p(\mathbb{Z})$, $1\leq p \leq \infty$, then it is invertible in every $\ell^{p}(\mathbb{Z}) $, $1 \leq p \leq \infty$, with uniformly bounded inverses. It follows that $\lambda \in \rho_{\ell^p}(A)$ for every $1 \leq p \leq \infty$. Therefore, both the resolvent $\rho(A)$ and the spectrum $\sigma(A)$ are the same for $\ell^2(\mathbb{Z})$ and $\ell^\infty(\mathbb{Z})$.
    
    Finally, $\ell^\infty$-stability and $\ell^\infty$-instability of a steady state $ u \in \ell^{\infty}(\mathbb{Z}) $ of \eqref{e:stab:cauchy_chi} again follows by~\cite[Theorems~5.1.1 and~5.1.3]{Henry1981}.
\end{rmk}



\section{Conclusion}\label{sec:discussion}
We conclude our paper by discussing two natural extensions of our results. We first discuss extension to Frenkel-Kontorova-type models and then weaken assumptions on the nonlinearity $g$.

\paragraph{Frenkel-Kontorova models} Since Theorems~\ref{t:main-uniqueness} and~\ref{t:main-twosol} deal with stationary solutions, they can straightforwardly be extended to the Frenkel-Kontorova-type lattice equations
\begin{equation}\label{e:lde:FK} 
{u}_{i}''(t) = d(u_{i-1}(t) - 2 u_{i}(t) + u_{i+1}(t))+g(u_{i}(t);a),\quad i\in\mathbb{Z}, \quad t \geq 0,
\end{equation}
and to the Frenkel-Kontorova-type lattice equations with damping $\gamma>0$
\begin{equation}\label{e:lde:FK-damping} 
{u}_{i}''(t) + \gamma u_{i}'(t)= d(u_{i-1}(t) - 2 u_{i}(t) + u_{i+1}(t))+g(u_{i}(t);a),\quad i\in\mathbb{Z}, \quad t \geq 0.
\end{equation}
Note that stationary solutions of \eqref{e:lde:FK}--\eqref{e:lde:FK-damping} satisfy the second-order difference equation~\eqref{e:stationary} and could thus be studied by the planar map $\phi$~\eqref{e:map:Keener} and Theorems~\ref{t:main-uniqueness} and~\ref{t:main-twosol} follow. It is fair to emphasize that this extension has two caveats. First, the stability result Theorem~\ref{t:main-stability} is no longer extendable, since \eqref{e:lde:FK} is conservative. Next, our analysis describes only a small part of equilibria for the standard Frenkel-Kontorova model with the common multistable reaction function $g(u)=\sin(u)$, \cite{Floria2005}.

\paragraph{Weakened assumptions on $g$} We assume that $g$ is a smooth function satisfying \ref{hyp:g:nodes}--\ref{hyp:g:shape} in Theorems~\ref{t:main-uniqueness}--\ref{t:main-stability}. Our analysis of stationary solutions is concentrated to the square $Q=[0,1]^2$ and the perturbations studied in Section~\ref{sec:stability} are also confined to the interval $[0,1]$ by the interval invariance in Theorem~\ref{t:stab:invariance}. Since our statements focus on sufficiently small $d$ we may consider bistable reaction functions $g$ that are smooth (or $C^2$ in the case of Theorem~\ref{t:main-stability}) only in the neighborhood of its roots~$0,a,1$ and replace \ref{hyp:g:shape} by
\begin{enumerate}        
    \item[$\emph{(}{g}'_2\emph{)}$] \label{hyp:g:shape:2} $ g \in C^{1}([0,\varepsilon]\cup [a-\varepsilon,a+\varepsilon] \cup[1-\varepsilon,1]) $ for an $ \varepsilon > 0 $, $ g'(0) < 0 $, $ g'(a) > 0 $, and $ g'(1) < 0 $. 
\end{enumerate}
Consequently, these generalizations of Theorems~\ref{t:main-uniqueness}--\ref{t:main-stability} include also Elmer's results \cite{Elmer2006, Elmer2005} for the sawtooth bistability $g=g_\mathrm{ST}$ from~\eqref{e:sawtooth}. In other words, we may allow nonsmoothness or nonmonotonicity if they occur outside the unit square $Q=[0,1]^2$ for the auxiliary function $h(u;a,d)$ \eqref{e:h}, see Figures~\ref{fig:strips:Vi} and~\ref{fig:1sectors}.

\section*{Acknowledgments}
\label{sec:acknowledgments}
\noindent JH acknowledges the support of the grant SGS-2025-007 by the Czech Ministry of Education, Youth and Sports.
The authors are grateful to Hermen Jan Hupkes, Anton\'{\i}n Slav\'{\i}k, and Vladim\'{\i}r \v{S}v\'{\i}gler for helpful comments and discussions.

\nocite{Elmer2007}
{
\footnotesize
\bibliographystyle{abbrv}
\bibliography{twosol_lit}
}

\end{document}